\newcommand{\coker}{\operatorname{coker}}
\renewcommand{\dim}{\operatorname{dim}}
\newcommand{\Spec}{\operatorname{Spec}}
\newcommand{\tensor}{\otimes}
\newcommand{\rk}{\operatorname{rk}}
\newcommand{\Gr}{\operatorname{Gr}}
\newcommand{\rig}{\mathrm{rig}}
\newcommand{\dR}{\mathrm{dR}}
\newcommand{\KdR}{\mathrm{KdR}}
\newcommand{\et}{\mathrm{\text{\'et}}}
\newcommand{\sing}{\mathrm{sing}}
\newcommand{\an}{\mathrm{an}}
\newcommand{\Pic}{\operatorname{Pic}}
\newcommand{\Cl}{\operatorname{Cl}}
\newcommand{\NS}{\operatorname{NS}}
\newcommand{\characteristic}{\operatorname{char}}
\newtheorem{theorem}{Theorem}[section]
\newtheorem{proposition}[theorem]{Proposition}
\newtheorem{corollary}[theorem]{Corollary}
\newtheorem{lemma}[theorem]{Lemma}
\newtheorem{fact}[theorem]{Fact}
\theoremstyle{definition}
\newtheorem*{question}{Question}
\theoremstyle{remark}
\newtheorem*{remark}{Remark}
\newtheorem*{remarks}{Remarks}
\begin{document}

\title[Hypersurfaces with defect]{Hypersurfaces with defect}
\author{Niels Lindner}
\date{\today}

\begin{abstract}
 A projective hypersurface $X \subseteq \mathbb P^n$ has \textit{defect} if $h^i(X) \neq h^i(\mathbb P^n)$ for some $i \in \{n, \dots, 2n-2\}$ in a suitable cohomology theory. This occurs for example when $X \subseteq \mathbb P^4$ is not $\mathbb Q$-factorial. We show that in characteristic $0$, the Tjurina number of hypersurfaces with defect is large. For $X$ with mild singularities, there is a similar result in positive characteristic. As an application, we obtain a lower bound on the asymptotic density of hypersurfaces without defect over a finite field.
\end{abstract}

\maketitle

\section{Introduction}

Let $K$ be a field and let $n \geq 3$ be an integer. A projective hypersurface $X \subseteq \mathbb P^n_K$ is said to have \textit{defect} if
$$ h^i(X) \neq h^i(\mathbb P^n_K) \quad \text{ for some } i \in \{n, \dots, 2n-2\},$$
where $h^n$ denotes the $n$-th Betti number in a reasonable cohomology theory for $K$-varieties. Some prominent examples of such cohomology theories include:
\begin{itemize}
 \item singular, algebraic de Rham or Kähler-de Rham cohomology if $K$ is of characteristic zero,
 \item rigid cohomology if $K$ is a perfect field of positive characteristic,
 \item étale cohomology.
\end{itemize}

In any of these theories, a hypersurface with defect is necessarily singular. Moreover, it seems that defect forces the hypersurface to have ``many'' singularities compared to their degree: For example, an important class of hypersurfaces with defect is formed by non-factorial hypersurfaces $X \subseteq \mathbb P^4$ (see also Section \ref{S:factorial}). By a result of Cheltsov \cite{Cheltsov}, if such an $X$ has at most ordinary double points as singularities, then the singular locus consists of at least $(\deg(X) - 1)^2$ nodes.

Another family of hypersurfaces of defect in $\mathbb P^n$ is given by cones over smooth hypersurfaces in $\mathbb P^{n-1}$, see Corollary~\ref{C:Cone} and the subsequent remark. The vertex of the cone is a singularity with big Milnor number.

\vspace{\baselineskip}
The literature on defect (e.g., \cite{Dimca-Betti}, \cite{Rams}, \cite{Werner}) is mainly on hypersurfaces with at most ordinary double points as singularities and exclusively over the field of complex numbers. The aim of this paper is to generalize the philosophy ``defect implies many singularities'' to arbitrary projective hypersurfaces over arbitrary fields. In Section~\ref{S:char0}, we prove the following:
\begin{theorem}\label{T:Main0}
 Let $K$ be a field of characteristic zero. Suppose that $X \subseteq \mathbb P^n_K$, $n \geq 3$, is a hypersurface with defect in algebraic de Rham, Kähler-de Rham, singular or étale cohomology. Denote by $\tau(X)$ the global Tjurina number of $X$. Then
$$ \tau(X) \geq \frac{\deg(X)-n+1}{n^2+n+1}.$$
Moreover, if $X$ has at most weighted homogeneous singularities, then
$$ \tau(X) \geq \deg(X) - n + 1.$$
\end{theorem}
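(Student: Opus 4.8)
The plan is to convert the cohomological defect condition into the nonvanishing of a graded piece of a Koszul-homology module attached to the Jacobian ideal, and then to read off a lower bound for $\tau(X)$ from the length of the singular scheme. I would begin with reductions. Since the comparison isomorphisms identify the Betti numbers in all four theories, and since both sides of the two asserted inequalities are unchanged under extension of the base field, the Lefschetz principle reduces everything to $K=\mathbb{C}$ with singular cohomology. If $\operatorname{Sing}(X)$ has positive dimension then $\tau(X)=\infty$ and there is nothing to prove, so I may assume $X=V(f)$ with $f\in R_d$, $R=\mathbb{C}[x_0,\dots,x_n]$, has only isolated singularities. In characteristic zero the Euler relation gives $f\in J_f=(\partial_0 f,\dots,\partial_n f)$, so the graded Jacobian ideal coincides with the Tjurina ideal; the singular scheme $\Sigma=\Proj(R/J_f)$ is zero-dimensional of length $\tau(X)$, and $\dim_{\mathbb{C}}(R/J_f)_e$ stabilizes to $\tau(X)$ for $e\gg 0$.

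Next I would locate the defect on the open complement $U=\mathbb{P}^n\setminus X$, a smooth affine $n$-fold. The maps $H^i(\mathbb{P}^n)\to H^i(X)$ are injective for $i\le 2n-2$, since powers of the hyperplane class restrict nontrivially, so the long exact sequence relating $H^\bullet_c(U)$, $H^\bullet(\mathbb{P}^n)$ and $H^\bullet(X)$ gives $h^i(X)-h^i(\mathbb{P}^n)=\dim\ker(H^{i+1}_c(U)\to H^{i+1}(\mathbb{P}^n))$. Combined with Poincar\'e--Lefschetz duality $H^{i+1}_c(U)\cong H^{2n-i-1}(U)^\vee$ and Artin vanishing, this shows that a defect in some degree $i\in\{n,\dots,2n-2\}$ forces $H^k(U)\neq 0$ for some $k=2n-i-1\in\{1,\dots,n-1\}$. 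For smooth $X$ all of these groups vanish by the Lefschetz theorem, so this nonvanishing is exactly the extra content carried by defect.

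The technical heart is to compute $H^\bullet(U)$ by algebraic de Rham cohomology of rational forms with poles along $X$, filtered by pole order; the associated graded of this filtered complex is a twist of the Koszul complex on $\partial_0 f,\dots,\partial_n f$. For isolated singularities the only off-top Koszul homology is $H_1:=H_1(\partial_0 f,\dots,\partial_n f)$, a one-dimensional graded module supported on the cone over $\Sigma$ whose transverse multiplicity at a point $p$ is the local Milnor number $\mu_p$; thus its support sheaf has length $\mu(X)=\sum_p\mu_p$. Tracking the pole-order spectral sequence, I would show that the nonvanishing of $H^k(U)$ for some $k\le n-1$ forces $(H_1)_{e_0}\neq 0$ for some degree $e_0\ge d-n-1$ (no contributing class can have smaller pole order). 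Since a length-$\ell$ zero-dimensional scheme has Castelnuovo--Mumford regularity at most $\ell$, a nonvanishing below degree $\ell-1$ is impossible unless $\ell\ge e_0+2$; applied to the length-$\mu(X)$ support data this yields $\mu(X)\ge e_0+2\ge d-n+1$.

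Finally I would descend from the Milnor number to the Tjurina number. If all singularities are weighted homogeneous then $\mu_p=\tau_p$ by Saito's theorem, so $\mu(X)=\tau(X)$ and the sharp bound $\tau(X)\ge d-n+1$ follows at once. In general I would invoke a uniform local comparison of the shape $\mu_p\le(n^2+n+1)\,\tau_p$, which sums to $\mu(X)\le(n^2+n+1)\,\tau(X)$ and turns $\mu(X)\ge d-n+1$ into $\tau(X)\ge (d-n+1)/(n^2+n+1)$. The main obstacle is the pole-order bookkeeping of the third paragraph: pinning down precisely which graded piece of $H_1$ is forced nonzero and in which degree, and verifying that the nonvanishing is genuinely governed by the length of the singular scheme rather than by a coarser invariant. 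A secondary difficulty, which is what produces the explicit constant, is establishing the Milnor--Tjurina comparison $\mu_p\le(n^2+n+1)\,\tau_p$ with a bound uniform in the singularity type.
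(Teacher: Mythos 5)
Your reductions and the translation of defect into cohomology of the complement $U=\mathbb P^n\setminus X$ are sound and run parallel to the paper's Lemmas \ref{L:Com1}--\ref{L:Com3}, but the technical heart of your plan (the third paragraph) contains a genuine gap, and in fact two errors. First, the bookkeeping of the Koszul module $H_1$ is off: by the Euler relation, the homogeneous Jacobian ideal $J_f\subseteq R=\mathbb C[x_0,\dots,x_n]$ localizes at a singular point $p$ to the \emph{Tjurina} ideal, so the transverse multiplicity of $H_1$ along the ray over $p$ is $\tau_p$, not $\mu_p$; indeed the Koszul Euler characteristic gives $\dim (H_1)_e=\dim(R/J_f)_e=\tau(X)$ for all $e\gg 0$. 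Second, and fatally, $H_1$ is supported on the \emph{one-dimensional} affine cone over $\Sigma$, not on a zero-dimensional scheme. Consequently $(H_1)_{e_0}\neq 0$ for \emph{every} sufficiently large $e_0$ as soon as $X$ is singular, whether or not $X$ has defect, so the implication ``defect forces $(H_1)_{e_0}\neq 0$ for some $e_0\geq d-n-1$'' is vacuous, and the regularity step ``nonvanishing in degree $e_0$ forces length $\geq e_0+2$'' is false for modules with one-dimensional support. A sanity check confirms something is wrong: if your chain of implications worked, it would prove $\tau(X)\geq d-n+1$ \emph{unconditionally}, which is exactly the statement the paper can only establish under the weighted-homogeneity hypothesis.

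What defect actually yields is not the nonvanishing of a graded piece but the \emph{non-surjectivity} of a specific comparison map: the paper's $\gamma\colon H^n(\mathbb P^n\setminus\overline X)\to H^n(\mathbb A^n\setminus X)/V$, where $V$ is manufactured from $H^{n-1}(X)$ via the residue map, together with its graded pieces $\Gr^k_P(\gamma)$ for the pole-order filtration. For pole order $k\geq 2$ the target factors through the Tjurina algebra $T(f)$ and the strictly-increasing-Hilbert-function trick gives $\tau\geq kd-n+1$; but for $k=1$ --- precisely your range $e_0=d-n-1$ --- the map only factors through $R/((f)+J(f)^3)$, and the constant $n^2+n+1=1+n+n^2$ arises from bounding $\dim R/((f)+J(f)^3)\leq(1+n+n^2)\tau$ via the filtration by $J(f)^i/J(f)^{i+1}$. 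In spectral-sequence language: your pole-order spectral sequence does not degenerate at the Koszul page for singular $X$, and pole order $1$ is exactly where it fails. Your fallback source for the constant, a uniform local inequality $\mu_p\leq(n^2+n+1)\tau_p$, is extraneous to the real difficulty: such a bound does exist (even $\mu_p\leq n\tau_p$, by Brian\c{c}on--Skoda $f^n\in J(f)$ and a filtration by cyclic $T$-modules), but it cannot repair the global step, because the obstruction is not measured by $\mu$ or $\tau$ alone but by surjectivity of restriction maps in specific degrees. Likewise, the sharp bound in the weighted homogeneous case does not follow from $\mu_p=\tau_p$; the paper needs Dimca's local analytic computation that the pole-order-$1$ local pieces are annihilated by the Tjurina ideal.
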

Of course, $\tau(X)$ will only be finite if $X$ has at most isolated singularities. The main ingredient in the proof is a close inspection of the algebraic de Rham cohomology of hypersurface complements in the spirit of Griffiths \cite{Griffiths-PeriodsI} and Dimca \cite{Dimca-Betti}.

The situation for positive characteristic fields is much more subtle. As explained in Subsection~\ref{SS:charp}, there are some obstructions to extending the proof of Theorem~\ref{T:Main0}. However, for hypersurfaces with very mild singularities, we can use a resolution of singularities approach similar to \cite{PRS} to prove:
\begin{theorem}\label{T:MainRes}
 Let $K$ be an algebraically closed field of characteristic $\neq 2$. Let $X \subseteq \mathbb P^n_K$ be a hypersurface with defect in étale or rigid cohomology. Suppose further that $X$ has a zero-dimensional singular locus $\Sigma = \Sigma_O \cup \Sigma_A$, where
 \begin{itemize}
  \item $\Sigma_O$ is formed by $x \in \Sigma$ being ordinary multiple points of multiplicity $m_x$ and
  \item $\Sigma_A$ consists of $x \in \Sigma$ which are singular points of type $A_{k_x}$.
 \end{itemize}
 Then
 $$\sum_{x \in \Sigma_O} m_x +  \sum_{x \in \Sigma_A} 2 \left\lceil \frac{k_x}{2} \right\rceil \geq \deg(X).$$ 
\end{theorem}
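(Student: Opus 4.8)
The plan is to prove this via resolution of singularities, comparing the cohomology of $X$ to that of a resolution $\tilde X \to X$ and controlling the discrepancy by the local data at each singular point. The first step is to fix a cohomology theory—étale or rigid—in which a defect occurs, so that $h^i(X) \neq h^i(\mathbb P^n_K)$ for some $i \in \{n,\dots,2n-2\}$. Since $X$ has isolated singularities, the natural move is to resolve them: blow up the singular locus $\Sigma$ (or take a suitable embedded/standard resolution) to obtain a smooth hypersurface-like variety $\tilde X$, and relate $h^i(\tilde X)$ to $h^i(X)$ using the fact that $\tilde X$, being a smooth variety deformation-equivalent to a smooth hypersurface of the same degree (or obtainable by smoothing), should have the same middle-dimensional Betti numbers as $\mathbb P^n$ outside the primitive part. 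I would use a Mayer–Vietoris or excision argument—or better, the Leray spectral sequence for $\pi\colon \tilde X \to X$—to express the difference $h^i(\tilde X) - h^i(X)$ as a sum of local contributions supported over the points of $\Sigma$.

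The second, and technically central, step is to compute these local contributions explicitly for the two allowed singularity types. For an ordinary multiple point $x$ of multiplicity $m_x$, the exceptional fiber of the blow-up is (the projectivization of) a cone, and its cohomology is governed by a smooth hypersurface of degree $m_x$ in $\mathbb P^{n-1}$; the relevant local invariant—the rank of the vanishing cohomology or the drop in Betti number—should come out to roughly $m_x$ (more precisely, a quantity bounded below by $m_x$ after the dust settles). For an $A_{k_x}$ singularity, the Milnor number is $k_x$ and the Milnor lattice is well understood; here the even/odd parity of $k_x$ matters because the vanishing cycles pair up, which is exactly the source of the ceiling term $2\lceil k_x/2\rceil$. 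The idea is that each $A_{k_x}$ point contributes a vanishing cohomology of dimension related to $k_x$, but the part that actually affects the relevant Betti number (the non-trivial part of the monodromy-invariant or anti-invariant piece) is $2\lceil k_x/2\rceil$. I would assemble these local bounds into a global inequality: the total defect, being nonzero, forces the sum of local contributions to be at least as large as the drop one would see from the primitive cohomology of a degree-$\deg(X)$ hypersurface, which after the comparison yields the bound $\sum_{\Sigma_O} m_x + \sum_{\Sigma_A} 2\lceil k_x/2\rceil \geq \deg(X)$.

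The hardest part, I expect, is making the comparison between $\tilde X$ and a genuinely smooth hypersurface rigorous in positive characteristic, where one lacks the analytic tools (Milnor fibration, classical vanishing cycle theory) available over $\mathbb C$. One must instead work with étale or rigid cohomology and control the nearby/vanishing cycle sheaves algebraically; the condition $\operatorname{char} K \neq 2$ is almost certainly needed precisely to handle the $A_{k_x}$ singularities, whose local equation $y_0^2 + \dots + y_{n-1}^2 + y_n^{k_x+1}$ involves squares, and to ensure the quadratic-form/lattice computations behave as in characteristic zero. A secondary difficulty is ensuring that the exceptional divisors contribute in the correct cohomological degrees and that no extra defect is introduced by the resolution itself; this requires knowing that $\tilde X$ is itself defect-free, which I would justify by showing $\tilde X$ fibers appropriately or is a smooth complete intersection whose cohomology is computed by the weak Lefschetz theorem. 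I would model this entire resolution-and-local-contribution strategy on the approach of Park–Rapoport–Schröer \cite{PRS} cited in the excerpt, adapting their local calculations to the two singularity types at hand.
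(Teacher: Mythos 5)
There is a genuine gap at the point where you assemble the local data into the global inequality. An estimate of the shape $\delta(X) \le \sum_x (\text{local contribution})$, which is what a Leray or Mayer--Vietoris comparison of $X$ with its resolution produces, only tells you, once $\delta(X)>0$, that the sum of local contributions is at least $1$; it never produces the degree $d=\deg(X)$ on the right-hand side, and your sketch contains no mechanism by which $d$ enters. The numbers in the statement also do not match any local vanishing-cohomology ranks: an ordinary point of multiplicity $m_x$ has local Milnor/vanishing cohomology of dimension on the order of $(m_x-1)^n$, not $m_x$, and for $A_{k_x}$ points with $n$ odd the local contribution to $H^n$ is actually \emph{zero} (Lemma~\ref{L:AkOdd}), while the theorem still charges them $2\lceil k_x/2\rceil$. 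The linear occurrence of $m_x$ is the signature of a divisor-class computation, not a cohomology count. Two of your auxiliary claims are also false or unavailable: the resolution $Y$ is not deformation-equivalent to a smooth degree-$d$ hypersurface (it is a divisor in a blow-up $P$ of $\mathbb P^n$, and in fact $h^n(Y)=h^n(X)+s$ for $n$ even by Proposition~\ref{P:Exc}, so the resolution genuinely changes $h^n$), and the smoothing/monodromy/vanishing-cycle machinery you lean on is precisely what one lacks in positive characteristic, which is the setting of the theorem.

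What is missing is a positivity input, and this is how the paper closes the loop. By the explicit embedded resolution (Proposition~\ref{P:ResSing}) one has, in $\Pic(P)$,
$$ Y \sim d H - \sum_{x \in \Sigma_O} m_x \mathcal D_x - \sum_{x \in \Sigma_A} \sum_{i=1}^{r_x} 2i \cdot \mathcal E_{x,i}, \qquad r_x = \left\lceil \frac{k_x}{2} \right\rceil,$$
so $m_x$ and $2r_x$ arise as multiplicities of exceptional divisors in the strict transform --- this is the true source of the coefficients in the bound. The paper then shows: (i) $\delta(X)=h^n(Y)-s-1$ for even $n$ and $\delta(X)=h^n(Y)$ for odd $n$ (Proposition~\ref{P:Exc}, via Betti numbers of the exceptional divisor computed by Mayer--Vietoris and the proper-birational sequence); (ii) if $\sum_{x\in\Sigma_O} m_x + \sum_{x\in\Sigma_A} 2r_x < d$, then $Y$ is ample in $P$ (Lemma~\ref{L:Ample}: write $Y$ as a positive multiple of $H$ plus the globally generated classes $H-\mathcal D_x$ and $H-\widetilde{\mathcal E_{x,i}}$, then check $Y.C>0$ on all irreducible curves); (iii) ampleness together with the Lefschetz hyperplane theorem and Poincar\'e duality forces $h^n(Y)=h^{n-2}(P)=s+1$, hence $\delta(X)=0$ (Corollary~\ref{C:Ample}). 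The theorem is the contrapositive of (ii)+(iii). Until your argument contains an analogue of step (ii) --- some Lefschetz-type positivity of the strict transform that pits the singularity data against $d$ --- the desired inequality cannot be extracted from local contributions alone.
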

For details, see Section~\ref{S:res}. We conjecture that the theme ``defect implies many singularities'' should extend to arbitrary hypersurfaces in any positive characteristic.

\vspace{\baselineskip}
As an application of Theorem~\ref{T:MainRes}, we prove in Section~\ref{S:density} (Corollary~\ref{C:Density}) a lower bound on the density of hypersurfaces without defect over a finite field:
\begin{theorem}\label{T:Density} Let $q$ be an odd prime power. Then
 \begin{align*}
  \lim_{d \to \infty} \frac{\#\{f \in \mathbb F_q[x_0,\dots,x_n]_d \mid \{f = 0\} \subseteq \mathbb P^n_{\mathbb F_q} \text{ has no defect}\}}{\#\mathbb F_q[x_0,\dots,x_n]_d} \geq \frac{1}{\zeta_{\mathbb P^n}(n+3)} = \prod_{i=3}^{n+2} \left(1 - q^{-i} \right).
 \end{align*}
\end{theorem}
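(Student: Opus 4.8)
The plan is to deduce Theorem~\ref{T:Density} from Theorem~\ref{T:MainRes} by means of a closed-point sieve in the spirit of Poonen's Bertini theorem over finite fields and the counting method of \cite{PRS}. The starting observation is the contrapositive of Theorem~\ref{T:MainRes}: a hypersurface $X = \{f = 0\} \subseteq \mathbb P^n_{\mathbb F_q}$ has no defect whenever its singular locus $\Sigma$ is finite, every point of $\Sigma$ is an ordinary multiple point or of type $A_{k}$, and the weighted count $\sum_{x \in \Sigma_O} m_x + \sum_{x \in \Sigma_A} 2\lceil k_x/2 \rceil$ is strictly less than $d := \deg X$. It therefore suffices to bound from below the density of the set $\mathcal G_d$ of those $f$ whose zero locus satisfies these three requirements, and to show that this lower bound is at least $\zeta_{\mathbb P^n}(n+3)^{-1}$.

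First I would encode membership in $\mathcal G_d$ as a condition on the jets of $f$ at the closed points of $\mathbb P^n$. Since $q$ is odd, the splitting lemma applies and the type of a singularity is determined by its Taylor expansion: being singular at a closed point $P$ costs $(n+1)\deg P$ conditions, and, conditionally, the germ is an $A_k$ precisely when the Hessian has corank at most one, and an ordinary multiple point when the tangent cone is smooth. The ``forbidden'' germs — those with Hessian of corank $\geq 2$ and non-smooth tangent cone, together with the non-isolated ones — form a locally closed subscheme of the jet space of codimension at least $n+3$ over $\mathbb F_{q^{\deg P}}$. Consequently the probability that $f$ is admissible at $P$ is at least $1 - q^{-(n+3)\deg P}$.

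The next step is to upgrade this pointwise estimate to a statement about the global density, which is where the genuine work lies. Following the sieve, I would split the closed points of $\mathbb P^n$ into those of low, medium, and high degree. For fixed $M$, the low-degree points contribute, in the limit $d \to \infty$, the truncated Euler product $\prod_{\deg P \leq M}(1 - q^{-(n+3)\deg P})$, whose local factors are independent once $d$ is large and which tends to $\prod_{P}(1 - q^{-(n+3)\deg P}) = \zeta_{\mathbb P^n}(n+3)^{-1}$ as $M \to \infty$. The contributions of the medium- and high-degree points must be controlled by a uniform estimate of Poonen type, bounding via the partial derivatives of $f$ the number of degree-$d$ polynomials acquiring a forbidden singularity at some point of large degree; the same estimate simultaneously rules out, up to density zero, a positive-dimensional singular locus, securing the finiteness of $\Sigma$. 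Finally, the numerical condition $\sum < d$ is asymptotically free: the singular subscheme of a generic member has a stable limiting distribution of bounded expected size, so the probability that the weighted count reaches $d$ tends to $0$ as $d \to \infty$ and does not affect the limit.

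The main obstacle is precisely the medium- and high-degree analysis. In the classical smoothness sieve the forbidden condition is a single irreducible locus and the derivative bound is clean; here it is a union of determinantal and jet-theoretic strata (corank $\geq 2$ Hessians, degenerate tangent cones, non-isolated germs), and one must verify that each keeps codimension at least $n+3$ uniformly over residue fields of every degree, so that no unexpectedly large family of degree-$d$ hypersurfaces with a bad high-degree singularity can arise. The odd characteristic and the ADE normal forms make the local classification itself routine; it is the uniform high-degree control of this stratified condition that requires care. Granting it, the three steps assemble to $\liminf_{d\to\infty}(\text{density of defect-free } f) \geq \zeta_{\mathbb P^n}(n+3)^{-1}$, which is the assertion of Theorem~\ref{T:Density}.
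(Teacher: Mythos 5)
Your proposal follows the same route as the paper's actual proof --- the identical $2$-jet computation (singularity costs $n+1$ conditions, Hessian corank $\geq 2$ costs two to three more, giving local densities at least $1 - \#\kappa(P)^{-(n+3)}$, exactly Lemmas~\ref{L:AkDensity} and~\ref{L:quadricrank}) feeding a closed-point sieve --- but it leaves both load-bearing steps open. The step you explicitly defer (``granting it''), the uniform medium- and high-degree control, is not new work to be done: it is precisely \cite{Poonen-Bertini}*{Theorem~1.3}, the sieve with infinitely many local conditions, whose hypothesis requires only that at all but finitely many closed points the forbidden jets be contained in the singularity condition $f_0 = f_1 = 0$. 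Your forbidden locus satisfies this containment, and for high-degree points one may enlarge it to the full singular condition by monotonicity, reducing everything to the already-proved smoothness sieve; so no ``uniform stratified'' analysis is needed, and the paper simply cites this theorem. Relatedly, your stratification is not quite well posed: ``non-isolated germ'' is not a bounded-order jet condition, and a corank-one Hessian does not exclude non-isolated singularities (at a general closed point of the singular line of $x_1^2 - x_2^2 x_3$ the Hessian has corank exactly one). In the sieve, finiteness of $\Sigma$ comes instead from the high-degree estimate (no singular points of degree $> M$, up to density $o_M(1)$ uniformly in $d$), and only then does corank $\leq 1$ plus isolatedness plus odd characteristic yield type $A_k$ with $k < \infty$.

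The genuine gap is your disposal of the numerical condition $\sum_{x \in \Sigma_O} m_x + \sum_{x \in \Sigma_A} 2\lceil k_x/2 \rceil < d$. The claim that ``the singular subscheme has a stable limiting distribution of bounded expected size, so the probability that the weighted count reaches $d$ tends to $0$'' is asserted, not proved, and it does not follow from your pointwise estimates: the weights $k_x$ and $m_x$ are unbounded-order jet data, and the required tail bound must hold uniformly in $d$, so this claim is of essentially the same depth as the statement being proved. The paper kills this event by a different, one-line reduction that you should adopt: by Theorem~\ref{T:MainRes}, a hypersurface with defect whose singularities are all of the allowed types has weighted count $\geq d$; since $2\lceil k_x/2 \rceil \leq k_x + 1 \leq 2\tau_x$ at an $A_{k_x}$ point (and similarly $m_x$ is dominated by a multiple of $\tau_x$ at an ordinary point), this forces $\tau(f) \geq c\,d$ for a fixed $c > 0$, and by Theorem~\ref{T:DensityMilnor}, i.e.\ \cite{Lindner-Density}*{Corollary~5.9}, the density of $\tau(f) > c\,d$ is zero. (In fact the paper's proof of Corollary~\ref{C:Density} does not even need ordinary multiple points in the admissible class: it lumps them into ``worse than $A_k$'' and absorbs them into the $1 - \zeta_{\mathbb P^n}(n+3)^{-1}$ bound.) Without this reduction, or a proved tail estimate of equal strength, your argument only shows that hypersurfaces all of whose singular points are of the allowed types have density at least $\zeta_{\mathbb P^n}(n+3)^{-1}$, not the stated bound for defect-free hypersurfaces.
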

\noindent
In view of Theorem~\ref{T:Main0} and \cite{Lindner-Density}*{Corollary~5.9}, we believe that this limit is actually $1$.

\subsection*{Acknowledgements}

This article constitutes a part of my PhD project, which has been supported by IRTG 1800 \textit{Moduli and Automorphic Forms}. The final stage of research took place at Leibniz Universität Hannover. I wish to thank my supervisor Remke Kloosterman and the working groups in Hannover and Berlin.

\section{De Rham Cohomology}\label{S:char0}

\subsection{Two de Rham cohomology theories}

Let $K$ be a field of characteristic zero and let $X$ be a $K$-variety. Suppose that $X$ admits a closed embedding $X \hookrightarrow Y$ into some smooth $K$-variety $Y$. One can associate two related cohomology theories to $X$ coming from Kähler differentials:

\begin{itemize}
 \item \textit{Kähler-de Rham cohomology} (\cite{ArapuraKang}): This is the hypercohomology of the de Rham complex on $X$: 
 $$ H^\bullet_\KdR(X) := \mathbb H^\bullet(X, \Omega_X^\bullet).$$
 \item \textit{Algebraic de Rham cohomology} (\cite{Hartshorne-dR}): This is defined as the hypercohomology of the formal completion of the de Rham complex on $Y$ along $X$:
 $$ H^\bullet_\dR(X \hookrightarrow Y) := \mathbb H^\bullet(Y/_X, \Omega^\bullet_Y/_X).$$
\end{itemize}
If $\mathcal I_X$ denotes the ideal sheaf of $X$ in $Y$, the natural projection
 $$ \Omega^\bullet_Y/_X = {\underleftarrow{\lim}}_j\, \Omega_Y^\bullet/\mathcal I_X^j \to \Omega_Y^\bullet/\mathcal I_X \to \Omega^\bullet_X$$
 is compatible with the exterior derivative and is hence a morphism of complexes. Taking hypercohomology, this gives rise to a natural functorial comparison map $$H^\bullet_\dR(X \hookrightarrow Y) \to H^\bullet_\KdR(X).$$ 
 By \cite{Hartshorne-dR}*{Theorem~II.1.4}, the algebraic de Rham cohomology of $X$ does not depend on the embedding $X \hookrightarrow Y$, and we will hence simply write $H_\dR^\bullet(X)$. In particular, if $X$ is smooth, the above comparison map is an isomorphism.

In general, algebraic de Rham cohomology always gives the ``correct'' Betti numbers in the following sense:

\begin{theorem}[\cite{Hartshorne-dR}*{Theorem IV.1.1}]\label{T:singcomp}
 Suppose $K = \mathbb C$. Then there is a natural isomorphism
 $$H^\bullet_\dR(X) \cong H^\bullet_\sing(X^\an)$$
 between the algebraic de Rham cohomology of $X$ and the singular cohomology of the associated analytic space $X^\an$.
\end{theorem}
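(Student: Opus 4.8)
The plan is to reduce the statement to Grothendieck's comparison theorem in the smooth case and to transport it across the formal completion that defines $H^\bullet_\dR$. First I would fix a closed embedding $X \hookrightarrow Y$ into a smooth $K$-variety $Y$, so that by definition $H^\bullet_\dR(X) = \mathbb H^\bullet(Y/_X, \Omega^\bullet_Y/_X)$ with $\Omega^\bullet_Y/_X = \varprojlim_j \Omega^\bullet_Y/\mathcal I_X^j$; embedding-independence is already guaranteed by Theorem~II.1.4. The target $H^\bullet_\sing(X^\an, \mathbb C)$ I would approach through its analytic de Rham incarnation, namely the hypercohomology of the formal completion of the analytic de Rham complex $\Omega^\bullet_{Y^\an}$ along $X^\an$. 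The whole proof then amounts to constructing a commutative ladder of natural maps linking these two formal hypercohomologies to $H^\bullet_\sing(X^\an, \mathbb C)$.

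Concretely, the comparison factors into three links. (i) A formal GAGA identifies the algebraic formal hypercohomology $\mathbb H^\bullet(Y/_X, \Omega^\bullet_Y/_X)$ with its analytic counterpart: when $Y$ is proper this follows from Grothendieck's formal functions theorem applied termwise to the coherent sheaves $\Omega^p_Y/\mathcal I_X^j$, combined with the spectral sequence of the inverse system. (ii) A comparison between formal and convergent germs expresses the analytic formal hypercohomology as the filtered colimit $\varinjlim_U \mathbb H^\bullet(U, \Omega^\bullet_{Y^\an}|_U)$ over open analytic neighborhoods $U$ of $X^\an$ in $Y^\an$. (iii) On each such $U$, which is smooth since $Y$ is, the holomorphic Poincaré lemma makes $\Omega^\bullet_{Y^\an}|_U$ a resolution of the constant sheaf $\mathbb C_U$, whence $\mathbb H^\bullet(U, \Omega^\bullet_{Y^\an}|_U) \cong H^\bullet(U, \mathbb C) = H^\bullet_\sing(U, \mathbb C)$.

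To conclude, I would invoke the existence of a fundamental system of neighborhoods $U$ that deformation retract onto $X^\an$ --- available because a complex algebraic, hence subanalytic, set admits regular neighborhoods in the ambient smooth space through the work of \L{}ojasiewicz or via a Whitney stratification. Such retractions give $\varinjlim_U H^\bullet_\sing(U, \mathbb C) \cong H^\bullet_\sing(X^\an, \mathbb C)$, and chaining (i)--(iii) with this colimit yields the isomorphism; naturality in $X$ is inherited from the functoriality of each link.

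The hard part is the interaction of the formal completion with properness and with the passage from formal to convergent germs. For non-proper $X$ the formal GAGA of link (i) fails directly: one must choose compatible algebraic and analytic compactifications and replace $\Omega^\bullet_Y$ by a logarithmic de Rham complex along a normal crossings boundary, controlling the cohomology near infinity \`a la Deligne so that the coherent comparison still applies. Simultaneously, link (ii) is a genuine theorem rather than a formality, since a priori the formal completion only sees formal germs along $X^\an$ while the colimit over neighborhoods records convergent ones; reconciling these, and checking that the logarithmic extension does not disturb the neighborhood colimit, is the delicate analytic-geometric core of the argument.
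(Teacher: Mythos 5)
The paper offers no proof of this statement: it is imported verbatim from \cite{Hartshorne-dR}*{Theorem~IV.1.1}, so the only meaningful comparison is with Hartshorne's own argument. Your three-link architecture --- formal GAGA, a formal-versus-convergent comparison, then the holomorphic Poincar\'e lemma plus regular neighborhoods --- is indeed the skeleton of the classical proof in the proper case, and links (i) and (iii), together with the \L{}ojasiewicz retraction argument, are sound. But link (ii) is not something you can cite or finesse: it \emph{is} the theorem. At the level of a single coherent sheaf, the map from convergent to formal germs is badly non-bijective (complete $\mathcal O_Y$ at a point: convergent versus formal power series), so there is no general ``comparison between formal and convergent germs'' yielding $\varinjlim_U \mathbb H^\bullet(U, \Omega^\bullet_{Y^\an}|_U) \cong \mathbb H^\bullet(Y^\an/_{X^\an}, \widehat\Omega^\bullet_{Y^\an})$; the two sides agree only after totalizing the de Rham complex, and proving that they do is exactly the analytic heart of Hartshorne's Chapter~IV --- equivalently, the statement that $\mathbb C_{X^\an} \to \widehat\Omega^\bullet_{Y^\an}$ (completion along $X^\an$) is a quasi-isomorphism even when $X$ is singular. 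Hartshorne establishes this by resolution of singularities: blow up to reduce to the case of a divisor with normal crossings in $Y$, carry out an explicit formal Poincar\'e lemma computation there, and descend along the blow-up using the exact sequences of his Chapter~II (the same sequences this paper quotes as Lemma~\ref{L:properbirational}). None of that machinery appears in your proposal; flagging the step as ``the delicate analytic-geometric core'' identifies the gap but does not discharge it, and since every other link is formal bookkeeping, the proposal is incomplete precisely where the content lies.

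A secondary gap is your treatment of non-proper $X$. Deligne's logarithmic complexes prove the comparison for \emph{smooth} open varieties, but for singular non-proper $X$ the standard reduction (and Hartshorne's) is d\'evissage: compactify $X \subseteq \overline X$ via Nagata, then play the long exact sequence relating $H^\bullet_\dR(X)$, $H^\bullet_\dR(\overline X)$ and de Rham cohomology with supports in the boundary against its singular-cohomology counterpart, concluding by the five lemma from the proper case. A log complex on a compactified resolution does not by itself see the singular $X$ --- you would still need descent data comparing it to $X$, i.e., more d\'evissage. To make your sketch a proof, either reproduce Hartshorne's induction or state the analytic quasi-isomorphism explicitly as an input with a reference.
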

However, for singular $X$, Kähler-de Rham cohomology tends to give bigger Betti numbers:
\begin{theorem}[\cite{BloomHerrera}*{Corollary 3.14}]\label{T:KdRcomp}
 Suppose $K = \mathbb C$ and that $X$ is complete or has at most isolated singularities. Then $H^\bullet_\sing(X^\an, \mathbb C)$ is a direct summand of $H^\bullet_\KdR(X)$.
\end{theorem}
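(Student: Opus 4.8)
The plan is to exhibit a natural comparison map $\alpha \colon H^\bullet_\sing(X^\an, \mathbb C) \to H^\bullet_\KdR(X)$ and to prove that it is injective; since these are graded $\mathbb C$-vector spaces, an injection automatically realizes its source as a direct summand of its target. The map $\alpha$ arises from the inclusion of the constant sheaf into the (analytic) Kähler-de Rham complex: locally constant functions lie in $\ker(d \colon \mathcal O_{X^\an} \to \Omega^1_{X^\an})$, so $\mathbb C_{X^\an} \to \Omega^\bullet_{X^\an}$ is a morphism of complexes, and taking hypercohomology together with the analytic comparison $H^\bullet_\KdR(X) \cong \mathbb H^\bullet(X^\an, \Omega^\bullet_{X^\an})$ yields $\alpha$. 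When $X$ is smooth, the holomorphic Poincaré lemma makes $\mathbb C_{X^\an} \to \Omega^\bullet_{X^\an}$ a quasi-isomorphism, so $\alpha$ is then an isomorphism; the content of the statement is that enough of this survives under resolution.

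Next I would fix a resolution of singularities $\pi \colon \widetilde X \to X$ with $\widetilde X$ smooth and $\pi$ an isomorphism over $X_\reg$. Functoriality of singular and of Kähler-de Rham cohomology provides pullback maps fitting into the commutative square
\begin{CD}
H^\bullet_\sing(X^\an) @>\alpha>> H^\bullet_\KdR(X) \\
@V\pi^*VV @VV\pi^*V \\
H^\bullet_\sing(\widetilde X^\an) @>\sim>> H^\bullet_\KdR(\widetilde X),
\end{CD}
in which the bottom arrow is the isomorphism $\alpha_{\widetilde X}$ from the smooth case. Consequently $\pi^* \circ \alpha$ equals, up to the bottom isomorphism, the topological pullback $\pi^* \colon H^\bullet_\sing(X^\an) \to H^\bullet_\sing(\widetilde X^\an)$, so it suffices to show that this topological pullback is injective.

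The crucial input is therefore the injectivity of $\pi^*$ on singular cohomology for a proper modification. When $X$ is complete this follows from Deligne's mixed Hodge theory (equivalently, from the decomposition theorem), which realizes $H^\bullet_\sing(X^\an)$ as a sub-object of $H^\bullet_\sing(\widetilde X^\an)$ via $\pi$ and the exceptional locus. When $X$ is instead assumed to have isolated singularities, one localizes around each singular point and compares the cohomology of a contractible Stein neighbourhood with that of its preimage, using the cone structure of the link; this is precisely where the hypotheses ``complete or isolated singularities'' enter. Granting injectivity of the topological $\pi^*$, the commutative square forces $\alpha$ to be injective, and splitting the inclusion of $\mathbb C$-vector spaces degree by degree shows that $H^\bullet_\sing(X^\an, \mathbb C)$ is a direct summand of $H^\bullet_\KdR(X)$. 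I expect the main obstacle to be exactly the injectivity of $\pi^*$ in the non-proper isolated case: without properness one cannot invoke global Hodge theory directly and must control the local contributions of the exceptional fibres to ensure they do not destroy injectivity.
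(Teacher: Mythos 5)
Your argument reduces everything to the injectivity of the topological pullback $\pi^*\colon H^\bullet_\sing(X^\an)\to H^\bullet_\sing(\widetilde X^\an)$ for a resolution $\pi$, and that claim is false in precisely the situations the theorem covers. Take $X\subseteq\mathbb P^2$ a nodal cubic: it is complete with one isolated singularity, $X^\an$ is homotopy equivalent to $S^2\vee S^1$, so $H^1_\sing(X^\an,\mathbb C)=\mathbb C$, while the resolution is $\widetilde X\cong\mathbb P^1$ with $H^1=0$; hence $\pi^*$ annihilates $H^1$. Your appeal to Hodge theory is backwards: $H^k(\widetilde X^\an)$ is pure of weight $k$, whereas for complete singular $X$ the weights of $H^k(X^\an)$ are only $\leq k$, and since $\pi^*$ is a morphism of mixed Hodge structures it \emph{kills} $W_{k-1}H^k(X^\an)$ (the class above is of weight $0$). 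Likewise the decomposition theorem exhibits $IC_X$, not $\mathbb Q_X$, as a direct summand of $R\pi_*\mathbb Q_{\widetilde X}$, and $H^\bullet(X)\to IH^\bullet(X)$ is not injective (same example). The failure is structural, not fixable by a better local analysis: in the nodal example the theorem itself forces $h^1_\KdR(X)\geq 1$, yet $H^1_\KdR(\widetilde X)=0$, so the composite $\pi^*\circ\alpha$ is zero on a class on which $\alpha$ is injective. No proof that factors $\alpha$ through the cohomology of a resolution can succeed, because $\alpha$ must be injective exactly on classes that die on every resolution.

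For comparison: the paper does not prove this statement but cites Bloom--Herrera, whose argument goes in the opposite direction from yours. Rather than post-composing $\alpha$ with a pullback to a smooth model, they construct a \emph{left inverse} at the level of complexes of sheaves, mapping $\Omega^\bullet_X$ into Herrera's complex of currents via integration over semi-analytic chains; the composite $\mathbb C_{X^\an}\to\Omega^\bullet_{X^\an}\to(\text{currents})$ then induces the identity-type comparison on cohomology, which is why one gets a genuine direct summand and why the hypotheses (compact, or isolated singularities) enter there rather than in any resolution step. A secondary gap in your write-up, minor by comparison: the identification of the algebraic hypercohomology $H^\bullet_\KdR(X)=\mathbb H^\bullet(X,\Omega^\bullet_X)$ with its analytic counterpart $\mathbb H^\bullet(X^\an,\Omega^\bullet_{X^\an})$ is GAGA-automatic only for complete $X$ and would need a separate argument in the non-complete isolated-singularity case.
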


Note that by the Lefschetz principle, we can always find an embedding of $K$ into $\mathbb C$, and both cohomology theories are compatible with field extensions.

\subsection{Cohomological tools}\label{SS:tools}

For future reference, we briefly mention some standard facts and tools. The notation $H^i(X)$ without any subscript refers to either algebraic de Rham or Kähler-de Rham cohomology. The dimension of $H^i(X)$ as a $K$-vector space will be denoted by $h^i(X)$.

\begin{fact}[Betti numbers of affine and projective space]\label{F:AnPn}
 Let $n \geq 1$ be an integer. Then
$$ h^i(\mathbb A^n_K) = \begin{cases}
                         1 & \text{ if } i = 0,\\
			 0 & \text{ if } i \neq 0
                        \end{cases}
\quad \text{ and } \quad
h^i(\mathbb P^n_K) = \begin{cases}
                         1 & \text{ if } i \in \{0, 2, \dots, 2n\},\\
			 0 & \text{ otherwise.}
                        \end{cases}
$$
\end{fact}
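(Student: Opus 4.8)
The plan is to exploit that both $\mathbb{A}^n_K$ and $\mathbb{P}^n_K$ are smooth. By the remark following Theorem~\ref{T:singcomp} (``if $X$ is smooth, the above comparison map is an isomorphism''), the natural map $H^\bullet_\dR(X) \to H^\bullet_\KdR(X)$ is an isomorphism in both cases, so it suffices to compute a single theory, say $H^\bullet_\dR$, and the notational convention $H^i(X)$ is unambiguous here. Since both theories are compatible with field extensions and, by the Lefschetz principle, $K$ embeds into $\mathbb{C}$, I would further reduce to the case $K = \mathbb{C}$. Then Theorem~\ref{T:singcomp} identifies $H^\bullet_\dR(X)$ with the singular cohomology $H^\bullet_\sing(X^\an)$, turning the statement into a purely topological computation of the Betti numbers of complex affine and projective space.

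For affine space, $(\mathbb{A}^n_{\mathbb{C}})^\an = \mathbb{C}^n$ is contractible, so $h^0 = 1$ and $h^i = 0$ for $i \neq 0$. For projective space, $(\mathbb{P}^n_{\mathbb{C}})^\an$ carries the standard CW-structure with one cell in each even real dimension $0, 2, \dots, 2n$, which yields $h^i = 1$ for even $i$ with $0 \le i \le 2n$ and $h^i = 0$ otherwise. This matches the claimed values exactly.

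Alternatively, and entirely within an algebraic framework valid over any characteristic-zero field, I would compute Hodge cohomology directly. For $\mathbb{A}^n$ the algebraic Poincaré lemma in characteristic zero shows that the de Rham complex of the polynomial ring resolves the constant sheaf, giving cohomology concentrated in degree $0$. For $\mathbb{P}^n$ one uses the classical values $H^q(\mathbb{P}^n, \Omega^p_{\mathbb{P}^n}) = K$ when $p = q \in \{0, \dots, n\}$ and $0$ otherwise; plugging these into the Hodge-de Rham spectral sequence recovers $h^i = \sum_{p+q=i} h^{p,q}$, which again gives the asserted answer.

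The computations themselves are completely standard; the only step demanding care along the algebraic route is the degeneration of the Hodge-de Rham spectral sequence at $E_1$, which I would avoid by instead reducing to $K = \mathbb{C}$ and quoting Theorem~\ref{T:singcomp}. That is the cleanest path and the one I expect to present.
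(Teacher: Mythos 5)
Your proposal is correct. Note, however, that the paper offers no proof of this statement at all: it is recorded in Subsection~\ref{SS:tools} as one of several ``standard facts and tools,'' so there is no argument of the author's to compare against. Your main route is exactly the one the paper's setup invites: smoothness of $\mathbb A^n$ and $\mathbb P^n$ makes the comparison map $H^\bullet_\dR \to H^\bullet_\KdR$ an isomorphism, so the ambiguous notation $h^i$ is harmless; compatibility with field extensions plus Theorem~\ref{T:singcomp} reduces everything to the singular cohomology of $\mathbb C^n$ (contractible) and of $\mathbb{CP}^n$ (one cell in each even dimension). One small strengthening of the reduction step: rather than invoking the Lefschetz principle for arbitrary $K$ (which strictly speaking requires $K$ to embed into $\mathbb C$, hence a cardinality restriction), observe that $\mathbb A^n$ and $\mathbb P^n$ are defined over $\mathbb Q$, so base-change compatibility reduces the computation to $\mathbb Q \subseteq \mathbb C$ directly, with no hypothesis on $K$. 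Also, your caution about degeneration of the Hodge--de Rham spectral sequence in the alternative route is unnecessary: once one quotes $H^q(\mathbb P^n, \Omega^p) = K$ for $p = q \leq n$ and $0$ otherwise, all nonzero $E_1$-terms sit on the diagonal $p = q$, and every differential $d_r$ with $r \geq 1$ has source and target in positions that cannot both be diagonal, so degeneration is automatic for support reasons; the algebraic route is thus just as self-contained as the transcendental one.
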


\begin{fact}[Excision]\label{F:Excision}
 Let $Y$ be a $K$-variety and let $X \subseteq Y$ be a closed subscheme. Then there is a long exact sequence
 $$\dots \to H^i(Y) \to H^i(Y \setminus X) \to H^{i+1}_X(Y) \to H^{i+1}(Y) \to \dots,$$
 where $H^\bullet_X(Y)$ denotes local cohomology of $Y$ with support in $X$. Moreover, if $U \subseteq Y$ is an open subscheme containing $X$, then there is a natural isomorphism $H^i_X(Y) \cong H^i_X(U)$.
\end{fact}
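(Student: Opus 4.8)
The plan is to realize the long exact sequence as the cohomology sequence of the standard localization triangle in the derived category of sheaves on $Y$, and to deduce the excision isomorphism from the fact that sections-with-support is a local functor. Throughout, let $\mathcal C^\bullet$ be the complex of sheaves computing $H^\bullet(-)$: for Kähler--de Rham cohomology this is the de Rham complex $\Omega^\bullet_Y$, while in the algebraic case one works on a smooth ambient variety with the formally completed de Rham complex. Write $j : Y \setminus X \hookrightarrow Y$ for the open immersion of the complement. By definition $H^i(Y)$, $H^i(Y \setminus X)$ and $H^i_X(Y)$ are the hypercohomology groups $\mathbb H^i(Y, \mathcal C^\bullet)$, $\mathbb H^i(Y \setminus X, j^*\mathcal C^\bullet)$ and $\mathbb H^i(Y, R\underline{\Gamma}_X \mathcal C^\bullet)$, where $\underline{\Gamma}_X$ is the functor of sections supported on $X$.

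First I would recall the distinguished triangle
$$ R\underline{\Gamma}_X\, \mathcal C^\bullet \longrightarrow \mathcal C^\bullet \longrightarrow Rj_*\, j^* \mathcal C^\bullet \xrightarrow{\ +1\ }, $$
which one obtains concretely by replacing $\mathcal C^\bullet$ with a complex of injective (or flasque) sheaves $\mathcal I^\bullet$, for which $0 \to \underline{\Gamma}_X \mathcal I^\bullet \to \mathcal I^\bullet \to j_* j^* \mathcal I^\bullet \to 0$ is a genuine short exact sequence of complexes. Applying the derived global-sections functor $R\Gamma(Y,-)$, and using $R\Gamma(Y, R\underline{\Gamma}_X -) = R\Gamma_X(Y,-)$ together with $R\Gamma(Y, Rj_* -) = R\Gamma(Y \setminus X, -)$, yields the triangle $R\Gamma_X(Y, \mathcal C^\bullet) \to R\Gamma(Y, \mathcal C^\bullet) \to R\Gamma(Y \setminus X, j^* \mathcal C^\bullet) \xrightarrow{+1}$; its long exact cohomology sequence is exactly the asserted one.

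For the localization statement I would exploit that $\underline{\Gamma}_X$, and hence its derived functor, commutes with restriction along an open immersion, so that $(R\underline{\Gamma}_X \mathcal C^\bullet)|_U \cong R\underline{\Gamma}_X(\mathcal C^\bullet|_U)$ for any open $U \supseteq X$, and that $\mathcal C^\bullet|_U$ is again the corresponding de Rham complex on $U$. Since the cohomology sheaves of $R\underline{\Gamma}_X \mathcal C^\bullet$ are supported on $X \subseteq U$, their derived sections over $Y$ coincide with those over $U$; passing to cohomology gives the natural isomorphism $H^i_X(Y) \cong H^i_X(U)$.

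I do not expect a genuine obstacle, since the statement is a formal consequence of Grothendieck's theory of local cohomology and the open/closed gluing of derived categories; the only points requiring care are the bookkeeping at the level of hypercohomology of a complex rather than of a single sheaf---choosing resolutions functorially so that all the identifications are compatible---and phrasing the argument uniformly for both de Rham theories, in particular verifying $j^*\Omega^\bullet_Y = \Omega^\bullet_{Y \setminus X}$, which is immediate since Kähler differentials localize.
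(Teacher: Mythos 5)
The paper states this Fact without proof---it is listed among ``standard facts and tools,'' with the underlying theory implicitly that of \cite{Hartshorne-dR}*{Chapter II}---so there is no in-paper argument to compare against; your derivation is precisely the standard one and is correct. The localization triangle, obtained from the termwise short exact sequence $0 \to \underline{\Gamma}_X \mathcal I^\bullet \to \mathcal I^\bullet \to j_*j^*\mathcal I^\bullet \to 0$ on a flasque resolution (flasqueness giving surjectivity of $\mathcal I(V) \to \mathcal I(V\setminus X)$), yields the long exact sequence after applying $\Gamma(Y,-)$, and your care to run the algebraic de Rham case on the smooth ambient variety with the formally completed complex (noting $j^*$ of the completion along $Y$ is the completion along $Y\setminus X$, and invoking embedding-independence for the identification of the third term) is exactly the needed bookkeeping. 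One step deserves a line more than you give it: the claim that a complex whose cohomology sheaves are supported on $X$ has the same derived sections over $Y$ and over $U$ reduces, via the hypercohomology spectral sequence, to the fact that a sheaf $G$ with $G|_{Y\setminus X}=0$ satisfies $G \cong i_*i^{-1}G$ for the closed immersion $i\colon X \hookrightarrow Y$, with $i_*$ exact. Alternatively, you can bypass derived-category support arguments entirely: for \emph{any} sheaf $F$ and open $U \supseteq X$, restriction $\Gamma_X(Y,F) \to \Gamma_X(U,F|_U)$ is bijective by the sheaf axiom applied to the cover $Y = U \cup (Y\setminus X)$ (glue a supported section on $U$ with zero on $Y\setminus X$); applying this termwise to a flasque resolution, whose restriction to $U$ is again flasque, gives $H^i_X(Y) \cong H^i_X(U)$ directly.
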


\begin{fact}[Smooth Gysin sequence]\label{F:GysinSmooth}
Suppose that $K$ is algebraically closed. Let $Y$ be a smooth $K$-variety and let $X \subseteq Y$ be a closed smooth subvariety of codimension $r$. Then there is a long exact sequence
$$ \dots \to H^i(Y) \to H^i(Y \setminus X) \xrightarrow{\rho} H^{i+1-2r}(X) \to H^{i+1}(Y) \to  \dots$$
The map $\rho$ is called \textit{Poincaré residue map}.
\end{fact}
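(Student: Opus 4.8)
The plan is to obtain the Gysin sequence by feeding a \emph{purity} (Thom) isomorphism into the excision sequence of Fact \ref{F:Excision}. Note first that the three spaces occurring, namely $Y$, the open subscheme $Y \setminus X$, and $X$, are all smooth, so algebraic de Rham and Kähler-de Rham cohomology agree on each of them; it therefore suffices to argue for either theory. For the closed immersion $X \hookrightarrow Y$, Fact \ref{F:Excision} supplies
$$\dots \to H^i(Y) \to H^i(Y \setminus X) \to H^{i+1}_X(Y) \to H^{i+1}(Y) \to \dots,$$
so the entire content of the Gysin sequence is concentrated in the purity isomorphism
$$H^{j}_X(Y) \cong H^{j-2r}(X),$$
valid because $X$ is a smooth codimension-$r$ subvariety of the smooth variety $Y$.

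To establish purity I would reduce to $\mathbb{C}$. By the Lefschetz principle fix an embedding $K \hookrightarrow \mathbb{C}$; since both theories are compatible with field extension and, by Theorem \ref{T:singcomp}, agree with singular cohomology of the associated analytic space, it is enough to prove $H^j_{X^\an}(Y^\an) \cong H^{j-2r}(X^\an)$ for singular cohomology. Local cohomology with support in $X^\an$ depends only on a neighborhood of $X^\an$ (the topological counterpart of the isomorphism $H^i_X(Y) \cong H^i_X(U)$ in Fact \ref{F:Excision}), so I may replace $Y^\an$ by a tubular neighborhood $T$ of $X^\an$, which retracts onto $X^\an$. The normal bundle $N_{X/Y}$ carries a complex structure, hence is an oriented real bundle of rank $2r$, and the classical Thom isomorphism gives $H^j_{X^\an}(T) \cong H^{j-2r}(X^\an)$. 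Substituting this into the excision sequence and defining $\rho$ as the composite $H^i(Y \setminus X) \to H^{i+1}_X(Y) \xrightarrow{\sim} H^{i+1-2r}(X)$ produces the asserted sequence.

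The main obstacle is the purity isomorphism itself. Over $\mathbb{C}$ it is the topological Thom isomorphism, whose existence rests on the complex structure providing a canonical orientation of the normal bundle; the Lefschetz reduction is what makes this tool available in characteristic zero. A purely algebraic alternative would deform to the normal cone, or pass to an étale-local model in which $Y$ looks like $X \times \mathbb{A}^r$ with $X$ the zero section, reducing the computation to $H^\bullet_{\{0\}}(\mathbb{A}^r)$; this group is one-dimensional and concentrated in degree $2r$, and the resulting local class glues to the Thom class of $N_{X/Y}$. Given the comparison machinery already set up in the excerpt, the reduction to $\mathbb{C}$ is the most economical route.
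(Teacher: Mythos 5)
Your proposal is correct in substance, and it takes a genuinely different route from the paper, which in fact offers no argument at all: the paper's entire proof is the citation \cite{Hartshorne-Ample}*{Theorem~III.8.3}. Your reconstruction --- the excision sequence of Fact~\ref{F:Excision} combined with the purity isomorphism $H^{j}_X(Y) \cong H^{j-2r}(X)$, the latter obtained by Lefschetz descent to $\mathbb C$, the comparison of Theorem~\ref{T:singcomp}, and the tubular-neighborhood/Thom isomorphism for the canonically oriented rank-$2r$ normal bundle --- is the standard transcendental proof, and every step is sound; what the citation buys the paper is precisely not having to set up this machinery. Two points would need to be made explicit in a full write-up. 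First, to transport purity back from singular to algebraic de Rham cohomology you need the comparison isomorphism to be compatible with the two excision sequences (for $X \subseteq Y$ algebraically and $X^\an \subseteq Y^\an$ topologically); this follows from the naturality of Hartshorne's comparison map together with the five lemma, which identifies the middle terms $H^{i+1}_X(Y)$ once the terms $H^i(Y)$ and $H^i(Y\setminus X)$ are compared. The same five-lemma device also upgrades your opening observation --- that all three spaces $Y$, $Y\setminus X$, $X$ are smooth, so the two de Rham theories agree on them --- to an identification of the Kähler-de Rham and algebraic de Rham local cohomology terms, so the sequence is indeed theory-independent as the paper's conventions in Subsection~\ref{SS:tools} require. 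Second, the Lefschetz step should strictly pass through a finitely generated subfield of $K$ over which $X \subseteq Y$ is defined before embedding into $\mathbb C$, since $K$ itself may be too large to embed; the paper states the same shortcut, so this is a shared imprecision rather than a gap in your argument. Your closing remark that a purely algebraic proof is available via an étale-local reduction to $H^\bullet_{\{0\}}(\mathbb A^r)$ is also accurate and is closer in spirit to how purity is proved without transcendental input, but given Theorem~\ref{T:singcomp} the route you chose is indeed the most economical.
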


\begin{proof}
 This is the content of \cite{Hartshorne-Ample}*{Theorem~III.8.3}.
\end{proof}

For algebraic de Rham cohomology, there is a good theory $H^i_{c,\dR}$ of cohomology with compact support such that $H^i_{c,\dR}(X) = H^i_\dR(X)$ whenever $X$ is proper \cite{BCF}.

\begin{fact}[Compact support Gysin sequence]\label{F:GysinCompact}
 Let $Y$ be an arbitrary $K$-variety and let $X \subseteq Y$ be a closed subscheme. Then there is a long exact sequence
$$ \dots \to H^i(Y)_{c,\dR} \to H^i_{c,\dR}(X) \to H^{i+1}_{c,\dR}(Y \setminus X) \to H^{i+1}_{c,\dR}(Y) \to  \dots$$
\end{fact}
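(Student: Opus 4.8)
Set $U := Y \setminus X$ and let $j \colon U \hookrightarrow Y$ and $i \colon X \hookrightarrow Y$ denote the open and the complementary closed immersion. The plan is to extract the sequence from the localization (gluing) triangle attached to this open--closed decomposition, exploiting the six-operation formalism that underlies the compactly supported theory $H^\bullet_{c,\dR}$ of \cite{BCF}. Recall that for the structure morphism $a_Z \colon Z \to \Spec K$ the groups $H^i_{c,\dR}(Z)$ are the cohomology of $R a_{Z!}$ applied to the de Rham complex of $Z$.

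The key input is the distinguished triangle of functors $j_! j^* \to \id \to i_* i^* \to [1]$. Applying it to the de Rham complex of $Y$ --- whose restrictions under $j^*$ and $i^*$ are the de Rham complexes of $U$ and $X$ --- and then pushing forward along $a_Y$ with compact support yields a distinguished triangle $R a_{Y!} j_! (-) \to R a_{Y!}(-) \to R a_{Y!} i_* (-) \to [1]$. Now functoriality of lower-shriek together with $a_U = a_Y \circ j$ and $a_X = a_Y \circ i$ gives $R a_{Y!} j_! = R a_{U!}$, while $i$ proper forces $i_! = i_*$ and hence $R a_{Y!} i_* = R a_{X!}$. The triangle therefore reads
$$ R\Gamma_{c,\dR}(U) \to R\Gamma_{c,\dR}(Y) \to R\Gamma_{c,\dR}(X) \xrightarrow{+1}, $$
and its long exact cohomology sequence is precisely
$$ \cdots \to H^i_{c,\dR}(Y) \to H^i_{c,\dR}(X) \to H^{i+1}_{c,\dR}(U) \to H^{i+1}_{c,\dR}(Y) \to \cdots, $$
read starting from the term $H^i_{c,\dR}(Y)$, the map $H^i_{c,\dR}(X) \to H^{i+1}_{c,\dR}(U)$ being the connecting homomorphism of the triangle.

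The main obstacle is foundational rather than combinatorial: in contrast to the étale or topological settings, the shriek-pushforward $R a_!$ and the localization triangle above are not formally available for algebraic de Rham cohomology and have to be read off from \cite{BCF}. Concretely, I would need from \cite{BCF} that $R a_{Z!}$ is well defined, computes $H^\bullet_{c,\dR}$, and is compatible with composition, and that an open--closed decomposition induces the gluing triangle; granting these, everything else is formal. As an independent consistency check one expects the compactly supported sequence to be Verdier-dual to the excision sequence of Fact~\ref{F:Excision}, with the dual of $H^\bullet_X(Y)$ corresponding to $H^\bullet_{c,\dR}(X)$; this also explains why the two sequences agree on the proper terms via the normalization $H^i_{c,\dR}(Z) = H^i_\dR(Z)$ of \cite{BCF}.
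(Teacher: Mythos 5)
Your overall strategy (localization triangle for the open--closed decomposition, pushed forward with compact supports) is the right modern viewpoint, but as written it has a genuine gap, and the gap sits exactly where the content of the Fact lives. The paper itself offers no argument: the Fact is quoted from \cite{BCF}, where this long exact sequence is one of the main results, proved by constructing $H^\bullet_{c,\dR}$ explicitly from formally completed de Rham complexes on a compactification. The false step in your sketch is the claim that the restrictions of $\Omega^\bullet_Y$ under $j^*$ and $i^*$ ``are the de Rham complexes of $U$ and $X$.'' This is fine for the open immersion $j$, but fails for the closed immersion $i$: the algebraic de Rham cohomology of the (possibly singular) closed subscheme $X$ is by definition the hypercohomology of the \emph{formal completion} of $\Omega^\bullet_Y$ along $X$ (\cite{Hartshorne-dR}), not of $i^*\Omega^\bullet_Y$, and not of $\Omega^\bullet_X$ either --- the latter computes the K\"ahler--de Rham theory, which genuinely differs for singular $X$ (cf.\ Theorem~\ref{T:KdRcomp}). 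Moreover, the sheaf-theoretic mechanism you are importing from the \'etale or topological setting breaks down here, because the algebraic de Rham complex is not Zariski-locally a resolution of a constant sheaf: for instance $dt/(1-t)$ has no primitive in the local ring of $\mathbb A^1$ at the origin, so stalkwise restriction and extension by zero produce the wrong cohomology, and in particular the Zariski hypercohomology of $j_!\Omega^\bullet_U$ does not compute $H^\bullet_{c,\dR}(U)$.

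Because of this, the distinguished triangle $j_!j^* \to \id \to i_*i^*$ cannot simply be ``applied to the de Rham complex'': there is no six-operation formalism at the level of $\mathcal O$-module complexes with integrable connection data of this naive kind. And when you propose to take from \cite{BCF} that $Ra_{Z!}$ exists, computes $H^\bullet_{c,\dR}$, is compatible with composition, and that open--closed decompositions induce gluing triangles, note that the last item, applied to the structure morphism of $Y$, \emph{is} the Fact: you would be assuming a statement equivalent to the one to be proved, and your remaining contribution (rotating the triangle and taking the long exact cohomology sequence) is the formal part. The reference \cite{BCF} is not written in six-functor language; the sequence there comes directly from a short exact sequence of completed complexes attached to a compactification, which is where the actual work happens. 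A genuinely rigorous alternative in characteristic zero would be to invoke the six-functor formalism for holonomic $\mathcal D$-modules, where $f_!$ does exist and the localization triangle is available --- but then you owe a nontrivial comparison identifying the $\mathcal D$-module $Ra_!$ of the constant object with the theory $H^\bullet_{c,\dR}$ of \cite{BCF} used in the paper (including the normalization $H^i_{c,\dR}(Z)=H^i_\dR(Z)$ for proper $Z$ and the duality of Fact~\ref{F:Poincare}). Your closing consistency check against Verdier duality and the excision sequence of Fact~\ref{F:Excision} is sound as a sanity check, but it does not close the gap.
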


\begin{fact}[Poincaré duality]\label{F:Poincare}
 Let $Y$ be a smooth $K$-variety of dimension $n$ and let $X$ be a closed subscheme. Then there is a perfect pairing
 $$ H^\bullet_{c,\dR}(X) \times H^{2n-\bullet}_{\dR,X}(Y) \to K.$$
 In particular $h^i(Y) = h^{2n-i}_{c, \dR}(Y)$ and if $Y$ is proper, then $h^i(Y) = h^{2n-i}(Y)$ for all $i$.
\end{fact}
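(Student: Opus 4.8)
The plan is to reduce the general statement to classical Poincaré duality for smooth varieties by dévissage along the two long exact sequences. Put $U := Y \setminus X$; as an open subscheme of the smooth $n$-dimensional $Y$, the complement $U$ is again smooth of dimension $n$. For smooth varieties one has the perfect pairings $H^i_{\dR}(Y) \times H^{2n-i}_{c,\dR}(Y) \to K$ and $H^i_{\dR}(U) \times H^{2n-i}_{c,\dR}(U) \to K$; over $K = \mathbb C$ these are the de Rham avatars of topological Poincaré duality via Theorem~\ref{T:singcomp}, and the general characteristic-zero case follows by the Lefschetz principle, both theories being compatible with field extensions.

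Next I would compare the localization sequence of Fact~\ref{F:Excision},
$$\cdots \to H^{2n-i}_{\dR,X}(Y) \to H^{2n-i}_{\dR}(Y) \to H^{2n-i}_{\dR}(U) \to H^{2n-i+1}_{\dR,X}(Y) \to \cdots,$$
with the $K$-dual of the compact-support sequence of Fact~\ref{F:GysinCompact},
$$\cdots \to H^i_{c,\dR}(U) \to H^i_{c,\dR}(Y) \to H^i_{c,\dR}(X) \to H^{i+1}_{c,\dR}(U) \to \cdots.$$
Dualizing over $K$ reverses the arrows and, after substituting the smooth Poincaré isomorphisms $H^j_{c,\dR}(Y)^\vee \cong H^{2n-j}_{\dR}(Y)$ and $H^j_{c,\dR}(U)^\vee \cong H^{2n-j}_{\dR}(U)$, yields a long exact sequence whose $Y$- and $U$-terms coincide with those of the localization sequence and in which $H^i_{c,\dR}(X)^\vee$ occupies precisely the slot of $H^{2n-i}_{\dR,X}(Y)$. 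Arranging the two sequences as a ladder with vertical Poincaré isomorphisms on the $Y$- and $U$-columns and applying the five lemma then forces the induced map $H^{2n-i}_{\dR,X}(Y) \to H^i_{c,\dR}(X)^\vee$ to be an isomorphism, which is exactly the asserted perfect pairing. The two ``in particular'' assertions are the special case $X = Y$, where $H^\bullet_{\dR,Y}(Y) = H^\bullet_{\dR}(Y)$ and, for proper $Y$, $H^\bullet_{c,\dR}(Y) = H^\bullet_{\dR}(Y)$.

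The main obstacle is the commutativity of this ladder: one must check that the smooth Poincaré pairings on $Y$ and $U$ are compatible with the natural restriction maps and, more delicately, with the connecting homomorphisms of the two sequences. This amounts to the usual compatibility of the cup product with boundary maps, which I expect to verify over $\mathbb C$ by transporting the whole diagram to singular cohomology through the comparison isomorphism of Theorem~\ref{T:singcomp} --- where it becomes the classical Poincaré--Alexander--Lefschetz duality --- and then descending to arbitrary $K$ by the Lefschetz principle. Alternatively, one may invoke the compact-support duality formalism established in \cite{BCF} directly.
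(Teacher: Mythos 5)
The paper gives no proof of this Fact at all: it is imported from the literature, namely from the compact-support duality theory for algebraic de Rham cohomology developed in \cite{BCF}, which is cited in the paragraph immediately preceding the statement. Your dévissage is therefore a genuinely different route --- you reconstruct the argument the paper outsources --- and its skeleton is sound: dualizing the sequence of Fact~\ref{F:GysinCompact} over $K$ is exact on vector spaces, your index bookkeeping matches the excision sequence of Fact~\ref{F:Excision} slot by slot, and the case $X=Y$ does yield both ``in particular'' claims (the second via $H^\bullet_{c,\dR}=H^\bullet_{\dR}$ on proper varieties, again from \cite{BCF}). It is also consonant with the paper's own remark, right after the Fact, that the smooth Gysin sequence is the Poincaré dual of the compact-support one.

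Two points need more care than your sketch supplies. First, the five lemma cannot be ``applied'' until a vertical map in the $X$-column exists: two exact sequences agreeing in the $Y$- and $U$-columns force only an equality of (finite) dimensions in the middle slot --- which, incidentally, is all the paper ever uses from this Fact --- whereas a perfect pairing in the asserted natural sense requires actually constructing the cup product with supports and a trace map, and then verifying commutativity of the ladder up to sign; you correctly flag this as the main obstacle but do not discharge it. Second, Theorem~\ref{T:singcomp} compares only ordinary de Rham with singular cohomology, so transporting the whole diagram to topology needs, in addition, comparison isomorphisms for $H^\bullet_{c,\dR}$ and $H^\bullet_{\dR,X}(Y)$ compatible with both long exact sequences: the local-cohomology one follows from functoriality of Hartshorne's comparison and the five lemma applied to the excision ladder, and the compact-support one by a further dévissage through a compactification, with the proper pieces handled by Theorem~\ref{T:singcomp}. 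Neither is automatic from the statement you quote. Your fallback --- invoking the formalism of \cite{BCF} directly --- short-circuits both issues and is precisely what the paper does.
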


In the case of algebraic de Rham cohomology of smooth varieties, the smooth Gysin sequence arises as Poincaré dual of the compact support Gysin sequence.

\begin{fact}[Cohomological dimension]\label{F:dim}
 For any $K$-variety $X$, $H^i(X) = 0$ and $H^i_{c,\dR}(X) = 0$ for $i < 0$ and $i > 2 \dim X$.
\end{fact}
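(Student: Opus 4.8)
The plan is to handle the vanishing in negative degrees uniformly and then to prove the upper bound $i > 2\dim X$ separately for each of the three theories involved: algebraic de Rham $H^i_\dR(X)$, Kähler--de Rham $H^i_\KdR(X)$, and compact support $H^i_{c,\dR}(X)$. The negative-degree vanishing is immediate for $H^i_\dR$ and $H^i_\KdR$, since each is the hypercohomology of a complex of sheaves concentrated in non-negative degrees (the de Rham complex $\Omega^\bullet_X$, respectively its completed analogue along a smooth ambient variety), so the associated hypercohomology spectral sequence lives in the first quadrant and nothing contributes in total degree below $0$; for $H^i_{c,\dR}$ it holds by construction \cite{BCF}, or equivalently by the comparison with compactly supported singular cohomology used below.

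For the upper bound in algebraic de Rham cohomology I would invoke the Lefschetz principle (as noted after Theorem~\ref{T:KdRcomp}) to reduce to $K=\mathbb C$, where Theorem~\ref{T:singcomp} identifies $H^i_\dR(X)$ with the singular cohomology $H^i_\sing(X^\an)$. Since $X^\an$ is a complex analytic space of complex dimension $\dim X$, it underlies a triangulable, locally finite CW complex of real dimension $2\dim X$, so its singular cohomology vanishes above degree $2\dim X$. The compact support case is parallel: after the same reduction one combines the comparison $H^i_{c,\dR}(X)\cong H^i_c(X^\an,\mathbb C)$ from \cite{BCF} with the vanishing of compactly supported cohomology above the real dimension.

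The Kähler--de Rham case is the delicate one. Here $H^i_\KdR(X)=\mathbb H^i(X,\Omega^\bullet_X)$, and the naive Hodge--de Rham spectral sequence $E_1^{p,q}=H^q(X,\Omega^p_X)$ does not suffice: although $H^q(X,\Omega^p_X)=0$ for $q>\dim X$ by Grothendieck vanishing, the sheaves $\Omega^p_X$ are in general nonzero for $p>\dim X$, being supported on the locus where the embedding dimension is at least $p$. For instance, a curve with a point of embedding dimension $3$ has $\Omega^3_X\neq 0$, so that $E_1^{3,0}=H^0(X,\Omega^3_X)$ already sits in total degree $3>2=2\dim X$. The remedy is to use instead the second hypercohomology spectral sequence $E_2^{p,q}=H^p(X,\mathcal H^q(\Omega^\bullet_X))$, fed with the key input that the de Rham cohomology sheaves are concentrated in low degrees, $\mathcal H^q(\Omega^\bullet_X)=0$ for $q>\dim X$. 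Granting this, the whole $E_2$-page lives in $0\le p,q\le\dim X$ (the bound on $p$ being Grothendieck vanishing), forcing $H^i_\KdR(X)=0$ for $i>2\dim X$.

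The crux, and the main obstacle I foresee, is the sheaf-level concentration $\mathcal H^q(\Omega^\bullet_X)=0$ for $q>\dim X$, which is precisely where the singular geometry enters. I would reduce once more to $K=\mathbb C$---the formation of these cohomology sheaves commutes with the faithfully flat base change $K\hookrightarrow\mathbb C$---and then appeal to the analytic theory of de Rham complexes of singular spaces in the circle of ideas of \cite{BloomHerrera}, where such a concentration result is available; local analytic arguments provide an alternative route. Verifying this bound carefully at arbitrarily bad isolated or non-isolated singularities is the one genuinely delicate point, the remaining spectral sequence bookkeeping being routine.
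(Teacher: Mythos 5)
The paper states this Fact as background and gives no proof at all, so there is no internal argument to compare against; your proposal must stand on its own. The parts of it concerning $H^i_\dR$ and $H^i_{c,\dR}$ do: negative-degree vanishing from a first-quadrant hypercohomology spectral sequence, and the upper bound via the Lefschetz principle, the comparison isomorphisms (Theorem~\ref{T:singcomp}, resp.\ \cite{BCF}), and triangulability of $X^\an$ as a space of real dimension $2\dim X$, is a correct and standard justification.

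The Kähler--de Rham half, however, has a genuine gap exactly where you locate the crux, and your proposed patch does not close it. The concentration $\mathcal H^q(\Omega^\bullet_X)=0$ for $q>\dim X$ is not ``available'' in \cite{BloomHerrera}: their Corollary~3.14 is a direct-summand statement, i.e.\ a \emph{lower} bound on $H^\bullet_\KdR$, and their vanishing results (e.g.\ Corollary~3.15) concern Stein spaces and the comparison with singular cohomology; none of this bounds the Kähler complex from above. Worse, the claim is not a soft analytic input at all: since the terms of $\Omega^\bullet_X$ are quasi-coherent, the Zariski stalk of $\mathcal H^q$ at a point $x$ is $\varinjlim_{U\ni x}\mathbb H^q(U,\Omega^\bullet_U)=H^q\bigl(\Omega^\bullet_{\mathcal O_{X,x}}\bigr)$, so your concentration statement is literally equivalent to the vanishing of Kähler--de Rham cohomology of \emph{every} affine $K$-variety (equivalently, every local ring essentially of finite type over $K$) above its dimension. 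That is precisely the statement the paper is careful never to assert in general: Fact~\ref{F:affine} proves the Kähler--de Rham vanishing only for affine \emph{hypersurfaces}, by an ad hoc kernel-complex argument, and says nothing beyond that case. Your own example makes the gap concrete: for a curve germ of embedding dimension $3$, such as $t\mapsto(t^3,t^4,t^5)$, the second spectral sequence reduces the Fact in degree $3$ to $H^3\bigl(\Omega^\bullet_{\mathcal O_{X,x}}\bigr)=0$, and nothing you cite establishes this; the fallback ``local analytic arguments'' does not transfer either, because the algebraic Kähler stalks are not the analytic ones and the Bloom--Herrera comparison controls only the singular-cohomology summand. So the Fact as stated for arbitrary $K$-varieties remains unproven in your write-up on the Kähler--de Rham side. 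A defensible alternative would be to prove it in the generality the paper actually uses (smooth varieties and hypersurfaces, where the embedding dimension is at most $\dim X+1$, so the first-quadrant sequence already kills $i>2\dim X+1$, and the remaining edge degree is handled as in Fact~\ref{F:affine} and Lemma~\ref{L:singcoh}), or to flag the general statement as a quoted fact requiring an external reference rather than a consequence of \cite{BloomHerrera}.
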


\begin{fact}[Cohomological dimension of affines]\label{F:affine}
 Let $X$ be affine of dimension $n$. Then $H^i_{\dR}(X) = 0$ for $i > n$. Moreover, if $X \subseteq \mathbb A^{n+1}_K$ is a hypersurface, then $H^i_{\KdR}(X) = 0$ for $i > n$.
\end{fact}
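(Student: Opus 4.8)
The plan is to prove the two assertions by different methods: the statement about $H^\bullet_\dR$ via a comparison to topology, and the one about $H^\bullet_\KdR$ by a direct computation with the de Rham complex.

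For the first assertion, I would reduce to a topological input. By the Lefschetz principle and the compatibility of algebraic de Rham cohomology with field extensions, I may assume $K = \mathbb C$. Then Theorem~\ref{T:singcomp} gives a natural isomorphism $H^i_\dR(X) \cong H^i_\sing(X^\an)$. Since $X$ is affine of dimension $n$, its analytification $X^\an$ is a Stein space of complex dimension $n$, and Artin vanishing yields $H^i_\sing(X^\an,\mathbb C) = 0$ for $i > n$; for smooth $X$ this is the classical Andreotti--Frankel theorem, whereas the possibly singular case is where a genuine external input is needed. This gives $H^i_\dR(X) = 0$ for $i > n$.

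For the second assertion, I would work with the de Rham complex directly. Write $B = K[x_0,\dots,x_n]/(f)$ for the coordinate ring of $X$. As $X$ is affine and each $\Omega^j_X$ is quasi-coherent, the hypercohomology spectral sequence degenerates, so $H^i_\KdR(X) = H^i(\Omega^\bullet_{B/K})$ is computed by the complex of global sections. The conormal sequence presents $\Omega^1_{B/K}$ as the free module $\bigoplus_{i=0}^n B\,dx_i$ modulo the single relation $df = \sum_i \partial_i f\,dx_i = 0$; in particular $\Omega^1_{B/K}$ has $n+1$ generators, whence $\Omega^j_{B/K} = \wedge^j_B \Omega^1_{B/K} = 0$ for $j > n+1$. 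Thus $H^i_\KdR(X) = 0$ for $i > n+1$ for free, and it remains to show that the top differential $d\colon \Omega^n_{B/K} \to \Omega^{n+1}_{B/K}$ is surjective. Here one computes $\Omega^{n+1}_{B/K} \cong (B/J)\,\omega$ with $\omega = dx_0 \wedge \cdots \wedge dx_n$ and $J = (\partial_0 f,\dots,\partial_n f)B$. Given a class $\bar g\,\omega$ with $g \in K[x_0,\dots,x_n]$, I would exhibit an explicit antiderivative: because $K$ has characteristic zero, there is an $h$ with $\partial_0 h = g$ (integrate $g$ monomial-by-monomial in $x_0$), and then $d(h\,dx_1\wedge\cdots\wedge dx_n) = \overline{\partial_0 h}\,\omega = \bar g\,\omega$ in $\Omega^{n+1}_{B/K}$. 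Hence $d$ is surjective and $H^{n+1}_\KdR(X) = 0$, which completes the proof that $H^i_\KdR(X) = 0$ for $i > n$.

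The genuinely hard part is the first assertion: the Kähler--de Rham computation is elementary characteristic-zero bookkeeping, while the vanishing of $H^i_\dR(X)$ for singular affine $X$ rests on Artin-type vanishing for Stein spaces. A minor point to verify in the second part is that $\overline{\partial_0 h}\,\omega$ depends only on the class of $g$ in $B$: changing $h$ by a multiple of $f$ alters $\partial_0 h$ by an element of $J$, which vanishes in $(B/J)\,\omega$.
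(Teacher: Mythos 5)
Your proof is correct. For the first assertion you follow the paper verbatim in spirit: reduce to $K = \mathbb C$ by the Lefschetz principle, apply the comparison isomorphism of Theorem~\ref{T:singcomp}, and quote vanishing of singular cohomology of Stein spaces above the complex dimension --- this is exactly the external input the paper takes from Bloom--Herrera (Corollary 3.15), and you are right that the singular case is where the genuine content lies. For the second assertion you take a genuinely different, though closely allied, route. The paper never computes with $\Omega^\bullet_{B/K}$ itself: it forms the short exact sequence of complexes $0 \to \mathcal K^\bullet \to \Omega^\bullet_R \to \Omega^\bullet_{R/(f)} \to 0$ with $R = K[x_0,\dots,x_n]$, and extracts the vanishing from the resulting long exact sequence, using $\Omega^i_R = 0$ for $i > n+1$ (hence $\mathcal K^{n+2} = 0$) together with the already-recorded vanishing $H^{n+1}(\mathbb A^{n+1}) = 0$ of Fact~\ref{F:AnPn}; in effect $H^{n+1}_\KdR(X)$ is exhibited as a quotient of $H^{n+1}(\mathbb A^{n+1}) = 0$. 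You instead analyze the top end of the Kähler--de Rham complex directly: the conormal presentation of $\Omega^1_{B/K}$, the identification $\Omega^{n+1}_{B/K} \cong (B/J)\,\omega$ (which is correct --- the kernel of $\bigwedge^{n+1} F \to \bigwedge^{n+1}(F/B\,df)$ is spanned by $df \wedge e_{\widehat{i}} = \pm\,\partial_i f\,\omega$), and the explicit antiderivative $\partial_0 h = g$ available in characteristic zero. Your integration step is precisely the mechanism underlying the top-degree vanishing for $\mathbb A^{n+1}$ that the paper imports as a black box, so the computational heart is the same; what your version buys is self-containedness plus the concrete description of $\Omega^{n+1}_{B/K}$ as a cyclic module over $B/J$, which pleasantly foreshadows the role of the Jacobian ideal $J(f)$ later in the paper, while the paper's version buys brevity by avoiding exterior powers of a non-free module. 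Two minor observations: the well-definedness check you flag at the end is not actually needed for surjectivity, since any polynomial lift $g$ of a given class $\bar g$ will do and the differential $d$ on $\Omega^n_{B/K}$ is canonically defined; and your quasi-coherence/degeneration remark reducing hypercohomology to global sections is the same implicit reduction the paper makes by working with $\Omega^\bullet_{R/(f)}$ as a complex of modules.
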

\begin{proof}
 The general result on algebraic de Rham cohomology follows from Theorem~\ref{T:singcomp} and the corresponding vanishing for Stein spaces \cite{BloomHerrera}*{Corollary 3.15}.
 
 Suppose now that $X$ is a hypersurface in affine $(n+1)$-space. Denote by $R$ the coordinate ring of $\mathbb A^{n+1}_K$ and suppose that the hypersurface $X$ is defined by $f \in R$. Consider the natural surjection
 \begin{align*}
  \Omega^\bullet_R \to \Omega^\bullet_R \tensor R/(f) \to \Omega^\bullet_{R/(f)}.
 \end{align*}
This is compatible with the exterior derivative $d$ and gives thus a short exact sequence
$$ 0 \to \mathcal K^\bullet \to  \Omega^\bullet_R \to \Omega^\bullet_{R/(f)} \to 0
$$
of complexes. This yields in turn a long exact sequence in cohomology, which reads
$$ \dots \to H^i( \mathcal K^\bullet) \to H^i(\mathbb A^{n+1}) \to H^i_\KdR(X) \to H^{i+1}( \mathcal K^\bullet) \to \dots $$
Since $\Omega^i_R = 0$ for $i > n+1$, we have $\mathcal K^i = 0$ and $\Omega^i_{R/(f)} = 0$ and thus $H^i_\KdR(X) = 0$ for $i > n+1$. Moreover, $H^n(\mathbb A^{n+1}) = 0$ and $\mathcal K^{n+2} = 0$ imply $H^{n+1}_\KdR(X) = 0$.  \qedhere
\end{proof}

\subsection{Algebraic de Rham cohomology of projective hypersurfaces}

Consider the projective space $\mathbb P^n_K$ over $K$. Let $\overline X \subseteq \mathbb P^n_K$ be a hypersurface. Then the first half of the algebraic de Rham cohomology of $\overline X$ is well understood:

\begin{lemma}[Lefschetz hyperplane theorem]\label{L:Lefschetz}
 The natural restriction $H^i_\dR(\mathbb P^n) \to H^i_\dR(\overline X)$ is an isomorphism for $i \leq n-2$ and injective for $i \leq n-1$.
\end{lemma}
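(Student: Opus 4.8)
The plan is to exploit the fact that the complement $U := \mathbb P^n_K \setminus \overline X$ is an affine variety of dimension $n$, and to feed this into the compact support Gysin sequence. First I would record that $U$ is affine, being the complement of a hypersurface (an ample divisor) in projective space, so that Fact~\ref{F:affine} yields $H^i_\dR(U) = 0$ for $i > n$. Dualizing via Poincaré duality (Fact~\ref{F:Poincare}, applied to the smooth $n$-dimensional variety $U$) converts this into the vanishing $H^j_{c,\dR}(U) = 0$ for $j < n$, since $h^j_{c,\dR}(U) = h^{2n-j}_\dR(U)$ and $2n - j > n$ precisely when $j < n$.

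Next I would apply the compact support Gysin sequence of Fact~\ref{F:GysinCompact} to the closed immersion $\overline X \hookrightarrow \mathbb P^n_K$ with open complement $U$. After reindexing, the relevant stretch reads
$$ \dots \to H^i_{c,\dR}(U) \to H^i_{c,\dR}(\mathbb P^n) \to H^i_{c,\dR}(\overline X) \to H^{i+1}_{c,\dR}(U) \to \dots. $$
Both $\mathbb P^n_K$ and $\overline X$ are proper, so the compact support groups coincide with ordinary algebraic de Rham cohomology, and the middle arrow is the natural restriction $H^i_\dR(\mathbb P^n) \to H^i_\dR(\overline X)$. For $i \leq n-2$ the two flanking terms $H^i_{c,\dR}(U)$ and $H^{i+1}_{c,\dR}(U)$ both vanish by the previous paragraph (as $i, i+1 \leq n-1 < n$), forcing the restriction to be an isomorphism. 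For $i = n-1$ only $H^{n-1}_{c,\dR}(U)$ vanishes, which leaves exactly injectivity of the restriction at $i = n-1$.

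The chief subtlety, and the step I would treat most carefully, is the identification of the Gysin arrow with the stated natural restriction: I must verify that, for the proper closed immersion $\overline X \hookrightarrow \mathbb P^n_K$, the functorial map on compact support cohomology agrees, under the isomorphism $H^\bullet_{c,\dR} \cong H^\bullet_\dR$ valid in the proper case, with the ordinary pullback in algebraic de Rham cohomology. Beyond this point the argument is purely formal and, crucially, never invokes smoothness of $\overline X$, relying only on its properness — which is exactly what allows the conclusion to hold for an arbitrary, possibly singular hypersurface.
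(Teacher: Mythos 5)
Your proposal is correct and follows essentially the same route as the paper: both arguments place the restriction map in the compact support Gysin sequence for $\overline X \hookrightarrow \mathbb P^n$ (using properness to identify $H^\bullet_{c,\dR}$ with $H^\bullet_\dR$ on both proper terms), and kill the flanking terms $H^i_{c,\dR}(\mathbb P^n \setminus \overline X)$ for $i < n$ by combining Poincaré duality with the affine vanishing $H^{>n}_\dR(\mathbb P^n \setminus \overline X) = 0$ of Fact~\ref{F:affine}, valid since the complement is smooth and affine of dimension $n$. The only difference is presentational: you flag explicitly the compatibility of the Gysin arrow with the natural pullback, which the paper asserts without comment.
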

\begin{proof}
The restriction map fits into the Gysin sequence with compact support:
$$ \dots \to H^i_{c,\dR}(\mathbb P^n \setminus \overline X) \to H^i_\dR(\mathbb P^n) \to H^i_\dR(\overline X) \to H^{i+1}_{c,\dR}(\mathbb P^n \setminus \overline X) \to \dots$$
By Poincaré duality, $$H^i_{c,\dR}(\mathbb P^n \setminus \overline X) \cong H^{2n-i}(\mathbb P^n \setminus \overline X)^\vee.$$
The variety $\mathbb P^n \setminus \overline X$ is smooth and affine of dimension $n$. Hence by Fact~\ref{F:affine} we conclude that $H^{2n-i}(\mathbb P^n \setminus \overline X)$ vanishes for $2n-i > n$, i.e. $i < n$.
\end{proof}

If $\overline X$ is smooth, then the Lefschetz hyperplane theorem combined with Poincaré duality on $\overline X$ gives almost all the Betti numbers:
\begin{corollary}\label{C:smoothcoh}
 Suppose that $\overline X$ is smooth. Then $h^i(\overline X) = h^i(\mathbb P^n)$ for $i \notin \{n-1, 2n\}$.
\end{corollary}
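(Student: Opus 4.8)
The plan is to split the range of degrees into a ``lower half'' controlled by the Lefschetz hyperplane theorem and an ``upper half'' which I reflect back into the lower half using Poincaré duality on $\overline X$. Throughout I use that $\overline X$ is smooth and projective of dimension $n-1$, so that Fact~\ref{F:Poincare} applies with $Y = \overline X$ and gives $h^i(\overline X) = h^{2n-2-i}(\overline X)$ for all $i$; on the smooth variety $\overline X$ the two de Rham theories agree, so there is no ambiguity in the unsubscripted notation.

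First I would treat the lower half $0 \le i \le n-2$. Here Lemma~\ref{L:Lefschetz} provides an isomorphism $H^i_\dR(\mathbb P^n) \xrightarrow{\sim} H^i_\dR(\overline X)$, hence $h^i(\overline X) = h^i(\mathbb P^n)$. The degree $i = n-1$ is exactly where Lefschetz only yields injectivity rather than an isomorphism, which is why it is excluded from the statement: the middle cohomology of $\overline X$ may acquire extra (primitive) classes.

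Next comes the upper half $n \le i \le 2n-2$. For such $i$, Poincaré duality gives $h^i(\overline X) = h^{2n-2-i}(\overline X)$, and now $0 \le 2n-2-i \le n-2$ lands precisely in the Lefschetz range, so $h^{2n-2-i}(\overline X) = h^{2n-2-i}(\mathbb P^n)$. It then remains to check that $h^{2n-2-i}(\mathbb P^n) = h^i(\mathbb P^n)$. By Fact~\ref{F:AnPn} both quantities are $1$ when their degree is even and $0$ otherwise; since $i$ and $2n-2-i$ have the same parity and since $0 \le 2n-2-i$ together with $i \le 2n-2 < 2n$ keep both degrees inside the nonvanishing window of $\mathbb P^n$, the two values agree. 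This disposes of all degrees $n \le i \le 2n-2$, including $i=n$ (which reflects to $n-2$). Finally, for $i = 2n-1$ the cohomological dimension bound of Fact~\ref{F:dim} gives $h^{2n-1}(\overline X) = 0 = h^{2n-1}(\mathbb P^n)$, while $i = 2n$ is genuinely exceptional: $h^{2n}(\overline X) = 0$ by the same bound, yet $h^{2n}(\mathbb P^n) = 1$, which is why $2n$ is excluded. Degrees $i < 0$ and $i > 2n$ vanish on both sides.

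There is no serious obstacle here: the argument is a bookkeeping exercise combining the three cited facts. The only point requiring a little care is the parity matching in the upper half, i.e. confirming that reflecting a degree $i \in \{n,\dots,2n-2\}$ through $\overline X$ lands in the Lefschetz range and produces the same Betti number of $\mathbb P^n$ as $i$ itself; this is precisely what pins down the two exceptional degrees $n-1$ and $2n$.
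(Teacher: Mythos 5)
Your proof is correct and follows exactly the route the paper indicates: the Lefschetz hyperplane theorem (Lemma~\ref{L:Lefschetz}) for $i \leq n-2$, Poincaré duality on the smooth proper $\overline X$ to reflect $n \leq i \leq 2n-2$ back into the Lefschetz range, and the dimension bound for the top degrees. Your careful bookkeeping of the parity matching and of the exceptional degrees $n-1$ and $2n$ fills in precisely the details the paper leaves implicit.
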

Due to dimension reasons, $h^{2n}(\overline X) = 0$. The middle Betti number of a smooth hypersurfaces can be computed by the methods of Griffiths \cite{Griffiths-PeriodsI}.

However, for singular hypersurfaces, Poincaré duality may fail. From now on, we will focus on the case of isolated singularities, i.e. the singular locus of $\overline X$ has dimension $0$.

\begin{lemma}\label{L:singcoh}
 Suppose that $\overline X$ has only isolated singularities. Then 
 \begin{itemize}
  \item $h^i_\dR(\overline X) = h^i(\mathbb P^n)$ for $i \notin \{n-1,n,2n\}$,
  \item $h^i_\KdR(\overline X) = h^i(\mathbb P^n)$ for $n+1 \leq i \leq 2n-1$.
 \end{itemize}
\end{lemma}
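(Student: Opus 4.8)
The plan is to reduce to $K = \mathbb{C}$ (via the Lefschetz principle and compatibility with field extensions, using Theorem~\ref{T:singcomp} to identify $H^\bullet_\dR$ with singular cohomology) and then to analyse $\overline X$ by excising its singular locus. Write $\Sigma$ for the finite singular locus, $\overline X^{\reg} = \overline X \setminus \Sigma$ for the smooth part (smooth of dimension $n-1$), $V = \mathbb P^n \setminus \Sigma$, and $U = \mathbb P^n \setminus \overline X$ (smooth affine of dimension $n$). The one geometric input I need is that the link $L_x$ of each isolated hypersurface singularity $x \in \Sigma$ is a closed connected oriented $(2n-3)$-manifold which is $(n-3)$-connected (Milnor). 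Via the analytic model, $H^i_{\Sigma,\dR}(\overline X) = \bigoplus_x \widetilde H^{i-1}(L_x)$, and together with Poincaré duality on $L_x$ the connectivity forces this to vanish except for $i \in \{n-1,n,2n-2\}$, with $H^{2n-2}_{\Sigma,\dR}(\overline X) \cong K^{\#\Sigma}$.

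For the algebraic de Rham statement, the range $i \leq n-2$ is Lemma~\ref{L:Lefschetz}, and $i \in \{2n-1,2n\}$ follows from $\dim \overline X = n-1$ and Fact~\ref{F:dim}. For $n+1 \leq i \leq 2n-2$ I would argue by duality on the smooth locus: the compact-support Gysin sequence (Fact~\ref{F:GysinCompact}) for $\Sigma \subseteq \overline X$, together with $H^\bullet_{c,\dR}(\Sigma)$ being concentrated in degree $0$, gives $H^i_\dR(\overline X) \cong H^i_{c,\dR}(\overline X^{\reg})$ for $i \geq 2$. Poincaré duality (Fact~\ref{F:Poincare}) on the smooth variety $\overline X^{\reg}$ identifies this with $H^{2n-2-i}_\dR(\overline X^{\reg})^\vee$. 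For $i$ in this range the dual degree $2n-2-i$ lies in $[0,n-3]$, where the link vanishing makes excision at $\Sigma$ an isomorphism $H^{2n-2-i}_\dR(\overline X^{\reg}) \cong H^{2n-2-i}_\dR(\overline X)$, which Lemma~\ref{L:Lefschetz} evaluates as $H^{2n-2-i}(\mathbb P^n)$. Since $2n-2-i$ and $i$ have the same parity, this equals $h^i(\mathbb P^n)$.

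For the Kähler--de Rham statement I would run the excision sequence (Fact~\ref{F:Excision}) for $\Sigma \subseteq \overline X$ in $\KdR$ and compare it with the $\dR$ one through the natural comparison map. The crucial point is that a small affine neighbourhood $U_x$ of $x$ in $\overline X$ is a hypersurface in $\mathbb A^n$, so Fact~\ref{F:affine} gives $H^j_\KdR(U_x) = 0$ for $j \geq n$, and the same vanishing holds for $\dR$ (here $U_x$ is affine of dimension $n-1$). Hence for $i \geq n+1$ both local cohomologies reduce to $H^{i-1}$ of the smooth punctured neighbourhood, so the comparison map $H^i_{\Sigma,\dR}(\overline X) \to H^i_{\Sigma,\KdR}(\overline X)$ is an isomorphism; in particular $H^i_{\Sigma,\KdR}(\overline X)$ vanishes for $i \geq n+1$, $i \neq 2n-2$, and equals $K^{\#\Sigma}$ for $i = 2n-2$. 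Computing $H^\bullet_\dR(\overline X^{\reg}) = H^\bullet_\KdR(\overline X^{\reg})$ in the upper range by the smooth Gysin sequence (Fact~\ref{F:GysinSmooth}) on $V$ gives $H^m_\dR(\overline X^{\reg}) \cong H^{m+2}(V)$ for $m \geq n$; removing the $\#\Sigma$ points from $\mathbb P^n$ shows this is $h^m(\mathbb P^n)$ for $n \leq m \leq 2n-4$, of dimension $\#\Sigma-1$ for $m = 2n-3$, and $0$ for $m \in \{2n-2,2n-1\}$. Feeding this into the $\KdR$ excision sequence yields $H^i_\KdR(\overline X) \cong H^i_\KdR(\overline X^{\reg}) = h^i(\mathbb P^n)$ for $n+1 \leq i \leq 2n-4$, and $H^{2n-1}_\KdR(\overline X) = 0$.

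The delicate degrees are $i = 2n-3$ and $i = 2n-2$, where the connecting map $\partial\colon H^{2n-3}_\dR(\overline X^{\reg}) \to H^{2n-2}_{\Sigma}(\overline X)$ intervenes; this is the main obstacle, since Poincaré duality is unavailable for $\KdR$. I would resolve it by naturality: in the $\dR$ sequence the tail $H^{2n-2}_{\Sigma,\dR}(\overline X) \to H^{2n-2}_\dR(\overline X) \to H^{2n-2}_\dR(\overline X^{\reg}) = 0$ shows $\partial_\dR$ has rank $\#\Sigma - 1 = \dim H^{2n-3}_\dR(\overline X^{\reg})$, hence is injective; since the comparison map is an isomorphism on both $H^{2n-3}(\overline X^{\reg})$ (smooth) and $H^{2n-2}_\Sigma(\overline X)$ (shown above), $\partial_\KdR$ is injective too. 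Exactness then gives $H^{2n-3}_\KdR(\overline X) = \ker\partial_\KdR = 0$ and $H^{2n-2}_\KdR(\overline X) = \coker\partial_\KdR \cong K$, matching $h^{2n-3}(\mathbb P^n) = 0$ and $h^{2n-2}(\mathbb P^n) = 1$. The remaining steps are bookkeeping with the link-connectivity concentration; the essential ingredients are the affine vanishing of Fact~\ref{F:affine} and the naturality of the $\dR \to \KdR$ comparison in top degree.
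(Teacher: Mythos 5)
Your argument is correct, but it takes a genuinely different route from the paper's. The paper never touches the local topology of the singularities: by Bertini it picks a hyperplane $H$ with $H \cap \Sigma = \emptyset$ and $\overline Y := \overline X \cap H$ smooth, shows $H^i(\overline X) \cong H^i_{\overline Y}(\overline X)$ for $i \geq n+1$ using only the affine vanishing of Fact~\ref{F:affine} applied to $X = \overline X \setminus \overline Y$, and then evaluates $h^i_{\overline Y,\dR}(\overline X) = h^i_{\overline Y,\dR}(\overline X \setminus \Sigma) = h^{2n-2-i}(\overline Y)$ by excision and Poincaré duality on the smooth pair, with the Lefschetz theorem on the smooth hypersurface $\overline Y$ finishing the count. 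For the Kähler-de Rham statement, instead of rerunning the whole excision sequence at $\Sigma$ as you do, the paper expresses $h^i_{\overline Y}(\overline X \setminus \Sigma)$ in \emph{both} theories as $\dim\ker + \dim\coker$ of the same restriction map $H^\bullet(\overline X \setminus \Sigma) \to H^\bullet(X \setminus \Sigma)$ between smooth varieties, where the comparison map is an isomorphism; this gives $h^i_\KdR(\overline X) = h^i_\dR(\overline X)$ for all $i \geq n+1$ in one stroke, with no boundary-map analysis in the degrees $2n-3$, $2n-2$. Your proof instead places the support at $\Sigma$ and imports Milnor's $(n-3)$-connectivity of the link of an isolated hypersurface singularity — a topological input nowhere used in the paper. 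What this buys is the concentration statement $H^i_\Sigma(\overline X) = 0$ for $i \notin \{n-1,n,2n-2\}$ together with $H^{2n-2}_\Sigma(\overline X) \cong K^{\#\Sigma}$, which is finer local information than the paper extracts; the cost is a longer diagram chase and reliance on the comparison between algebraic and topological local cohomology.

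Two boundary points to repair. First, your step ``$H^{2n-3}_\KdR(\overline X) = \ker\partial_\KdR = 0$'' silently uses $H^{2n-3}_{\Sigma,\KdR}(\overline X) = 0$, which your comparison argument supplies only when $2n-3 \geq n+1$, i.e.\ $n \geq 4$. For $n = 3$ the degree $2n-3 = n$ lies outside the lemma's range, and the assertion is in fact false there (a cone over a positive-genus plane curve has $h^3(\overline X) > 0$), so this claim must be restricted to $n \geq 4$; fortunately the degrees the lemma does require for $n = 3$, namely $i = 4 = 2n-2$ and $i = 5 = 2n-1$, are still covered by your injectivity-of-$\partial$ and vanishing arguments, since the local comparison isomorphism holds at $i = 2n-2 = n+1$. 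Second, a minor slip: at $i = 2n$ the Betti numbers differ, $h^{2n}_\dR(\overline X) = 0 \neq 1 = h^{2n}(\mathbb P^n)$ — which is exactly why $2n$ is excluded from the statement — so your phrase that ``$i \in \{2n-1,2n\}$ follows from Fact~\ref{F:dim}'' should claim only the vanishing of $h^i_\dR(\overline X)$, and equality with $h^i(\mathbb P^n)$ only at $i = 2n-1$.
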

\begin{proof}
 Again by dimension reasons, $h^{2n}_\dR(\overline X) = 0$, whereas $h^{2n}(\mathbb P^n) = 1$. Denote by $\Sigma$ the singular locus of $\overline X$. By Bertini's theorem, after possibly extending the base field, there is a hyperplane $H \subseteq \mathbb P^n$ such that $\Sigma \cap H = \emptyset$ and $\overline Y := \overline X \cap H$ is a smooth hypersurface in $H \cong \mathbb P^{n-1}$. In particular
 $$h^i(\overline Y) = h^i(\mathbb P^{n-1}) = h^i(\mathbb P^n), \quad i \leq n-3.$$
 
 Let $X := \overline X \setminus \overline Y$. This is a singular hypersurface in $\mathbb A^n$, so  $H^i(X) = 0$ for $i \geq n$. Using the long exact sequence
 $$ \dots \to H^{i-1}(X) \to H^i_{\overline Y}(\overline X) \to H^i(\overline X) \to H^i(X) \to \dots,$$
 we obtain $$H^i_{\overline Y}(\overline X) \cong H^i(\overline X), \quad i \geq n+1.$$
 Since $\overline Y$ is a smooth closed subscheme of $\overline X \setminus \Sigma$, excision and Poincaré duality on $\overline X \setminus \Sigma$ yield
 $$h^i_{\overline Y,\dR}(\overline X) = h^i_{\overline Y,\dR}(\overline X \setminus \Sigma) = h^{2n-2-i}(\overline Y).$$
 Thus
 $$h^i_\dR(\overline X) = h^{2n-2-i}(\mathbb P^n) = h^i(\mathbb P^n), \quad i \geq n+1.$$
 For Kähler-de Rham cohomology, we have again $H^i_\KdR(\overline X) \cong H^i_{\overline Y, \KdR}(\overline X \setminus \Sigma)$ for $i \geq n+1$. Note that $H^i_{\overline Y}(\overline X \setminus \Sigma)$ fits into a long exact sequence
 $$ \dots \to H^{i-1}(\overline X \setminus \Sigma) \to H^{i-1}(X \setminus \Sigma) \to H^i_{\overline Y}(\overline X \setminus \Sigma) \to H^i(\overline X \setminus \Sigma) \to H^i(X \setminus \Sigma) \to \dots$$
 in both cohomology theories. In particular, since $\overline X \setminus \Sigma$ and $X \setminus \Sigma$ are smooth,
 \begin{align*}
h^i_\KdR(\overline X) &= h^i_{\overline Y, \KdR}(\overline X \setminus \Sigma) \\
&= \dim \ker \left(H^i(\overline X \setminus \Sigma) \to H^i(X \setminus \Sigma)\right) + \dim \coker \left(H^{i-1}(\overline X \setminus \Sigma) \to H^{i-1}(X \setminus \Sigma)\right) \\
&= h^i_{\overline Y, \dR}(\overline X \setminus \Sigma) \\
&=h^i_\dR(\overline X)  , \quad i \geq n+1.
 \end{align*}
 
 The case $i \leq n-2$ in algebraic de Rham cohomology is handled by the Lefschetz hyperplane theorem \ref{L:Lefschetz}.
\end{proof}

\subsection{Defect} 
 Let $\overline X \subseteq \mathbb P^n_K$ be a hypersurface with at most isolated singularities.
 \begin{itemize}
  \item The \textit{defect of $\overline X$ in algebraic de Rham cohomology} is $\delta_\dR(\overline X) := h^n_\dR(\overline X) - h^n_\dR(\mathbb P^n)$.
  \item  The \textit{defect of $\overline X$ in Kähler-de Rham cohomology} is $\delta_\KdR(\overline X) := h^n_\KdR(\overline X) - h^n_\KdR(\mathbb P^n)$.
 \end{itemize}
 Clearly $\delta_\dR(\overline X) \leq \delta_\KdR(\overline X)$ by Theorem~\ref{T:KdRcomp}. We will show in Corollary~\ref{C:defindep} that in fact equality holds. Hence we can simply speak of \textit{defect} and denote it by $\delta(\overline X)$. Furthermore, we will say that $\overline X$ \textit{has defect} if $\delta(\overline X) > 0$.

\begin{remarks} More remarks on defect:
 \begin{itemize}
  \item In other words, $\overline X$ has defect if the $n$-th Betti numbers of $\overline X$ and $\mathbb P^n$ do not agree. In particular, a hypersurface $\overline X$ with defect has to be singular.
  \item Since $\overline X$ is assumed to have at most isolated singularities, $h^i(\overline X) = h^i(\mathbb P^n)$ for all $i \neq \{n-1, n, 2n\}$ by Lemma~\ref{L:singcoh}.
  \item By Lemma~\ref{L:Com1}, the defect of $\overline X$ is always non-negative.
  \item Defect depends only on the Betti numbers. Since our cohomology theories involved are compatible with field extensions, we may as well assume that $K$ is algebraically closed.
 \end{itemize}
\end{remarks}

\subsection{Defect and cokernels}
In the remainder of this subsection, we will give some cohomological characterizations of defect for hypersurfaces with isolated singularities. Let $n \geq 3$ be an integer and let $\overline X \subseteq \mathbb P^n_K$ be a hypersurface with singular locus $\Sigma$. Assume that $\dim \Sigma = 0$. Again by Bertini's theorem, we can find a hyperplane $H \subseteq \mathbb P^n_K$ such that $H \cap \Sigma = \emptyset$ and $\overline X \cap H$ is smooth. Define $X := \overline X\setminus(\overline X \cap H) $; this is a singular hypersurface in $\mathbb P^n \setminus H \cong \mathbb A^n$.

\begin{lemma}\label{L:Com1} Consider the long exact sequence
$$  \dots \to  H^{n-1}(\overline X\setminus \Sigma) \xrightarrow{\alpha} H^n_\Sigma(\overline X) \to H^n(\overline X) \to H^n(\overline X \setminus \Sigma) \to H^{n+1}_\Sigma(\overline X) \to \dots$$
Then $ \delta(\overline X) = \dim \coker \alpha.$
\end{lemma}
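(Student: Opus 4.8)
Write $V := \overline X \setminus \Sigma$ for the smooth locus and label the relevant portion of the displayed sequence
$$H^n_\Sigma(\overline X) \xrightarrow{\beta} H^n(\overline X) \xrightarrow{\gamma} H^n(V) \xrightarrow{\delta'} H^{n+1}_\Sigma(\overline X).$$
The plan is to first do the purely formal bookkeeping. Exactness yields $\im\beta = \coker\alpha$ and $\ker\delta' = \im\gamma$, so the sequence breaks into the short exact sequence $0 \to \coker\alpha \to H^n(\overline X) \to \im\gamma \to 0$. Hence $h^n(\overline X) = \dim\coker\alpha + \dim\im\gamma$, and since $\delta(\overline X) = h^n(\overline X) - h^n(\mathbb P^n)$, the whole statement reduces to the single identity $\dim\im\gamma = h^n(\mathbb P^n)$, equivalently $\dim\ker\delta' = h^n(\mathbb P^n)$. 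After a field extension I may assume $K$ algebraically closed, so that the smooth Gysin sequence is at my disposal.

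Next I would get a hold on $H^n(V)$ by exploiting that $V$ is a \emph{smooth} hypersurface inside the smooth variety $\mathbb P^n \setminus \Sigma$, whose complement $(\mathbb P^n\setminus\Sigma)\setminus V = \mathbb P^n \setminus \overline X$ is smooth affine of dimension $n$. Feeding this into the smooth Gysin sequence (Fact~\ref{F:GysinSmooth}) and using that $H^i$ of a smooth affine $n$-fold vanishes for $i > n$ (Fact~\ref{F:affine}; note that $H_\KdR = H_\dR$ on the smooth varieties involved, so both theories behave identically here) collapses it to $H^n(V) \cong H^{n+2}(\mathbb P^n\setminus\Sigma)$. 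Since $\Sigma$ is finite of codimension $n$, purity for $\Sigma \subseteq \mathbb P^n$ identifies $H^\bullet(\mathbb P^n\setminus\Sigma)$ with $H^\bullet(\mathbb P^n)$ in all degrees $\leq 2n-2$; for $n \geq 4$ this already gives $h^n(V) = h^{n+2}(\mathbb P^n) = h^n(\mathbb P^n)$ on the nose.

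The hard part will be controlling $\delta'$, that is, the local cohomology $H^{n+1}_\Sigma(\overline X)$ concentrated at the singular points. Each $x \in \Sigma$ is an isolated singularity of a hypersurface, hence an isolated hypersurface singularity of an $(n-1)$-fold, whose link is $(n-3)$-connected by Milnor; combining this connectivity with Poincaré duality on the $(2n-3)$-dimensional link shows $H^{n+1}_x(\overline X) = 0$ for $n \geq 4$. Thus $\delta' = 0$ and $\dim\ker\delta' = h^n(V) = h^n(\mathbb P^n)$, settling $n \geq 4$. The boundary case $n = 3$ is genuinely different and is where I expect the main obstacle: there $H^4_\Sigma(\overline X)$ and $H^3(V)$ are both nonzero (of dimensions $\#\Sigma$ and $\#\Sigma - 1$), so I must instead prove that $\delta'$ is injective. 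I would derive this from surjectivity of $H^4_\Sigma(\overline X) \to H^4(\overline X)$, which holds because the following term $H^4(V)$ is the top cohomology of the smooth \emph{noncompact} surface $V$ and therefore vanishes. Either way one lands on $\dim\ker\delta' = h^n(\mathbb P^n)$, which is exactly the identity needed; the delicate point throughout is the compensation, in low dimension, between the nonvanishing of the singular local cohomology and the extra classes it forces into $H^n(V)$.
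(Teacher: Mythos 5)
Your formal reduction to the identity $\dim\ker\delta' = h^n(\mathbb P^n)$ and your Gysin computation of $h^n(\overline X \setminus \Sigma)$ coincide with the paper's argument, and over $\mathbb C$ — hence, by the Lefschetz principle and Theorem~\ref{T:singcomp}, for algebraic de Rham, singular and étale cohomology — both halves of your proof can be made to work. Two small points there: the link of an isolated hypersurface singularity of the $(n-1)$-fold $\overline X$ is in fact $(n-2)$-connected by Milnor (you wrote $(n-3)$-connected, which is weaker but still yields $H_{n-3}(L)=0$ and hence $H^{n+1}_{\{x\}}(\overline X)=0$ for $n \geq 4$); and at $n=3$, surjectivity of $H^4_\Sigma(\overline X) \to H^4(\overline X)$ does not by itself give injectivity of $\delta'$ — you need to combine it with the dimension count $\dim\im\delta' = h^4_\Sigma(\overline X) - h^4(\overline X) = \#\Sigma - 1 = h^3(V)$, which your stated dimensions do supply, so this is a phrasing slip rather than an error.

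The genuine gap is that the lemma is asserted, and later used, for \emph{both} theories: $H^\bullet$ means algebraic de Rham \emph{or} Kähler--de Rham cohomology, and Lemma~\ref{L:Com2} and Corollary~\ref{C:defindep} invoke this lemma in the KdR setting \emph{before} the equality $\delta_\dR = \delta_\KdR$ is available. Your only input about the singular variety $\overline X$ is transcendental (Milnor fibration theory), and Theorem~\ref{T:KdRcomp} says merely that singular cohomology is a \emph{direct summand} of $H^\bullet_\KdR$ — for singular varieties the KdR Betti numbers can be strictly larger. So topological vanishing of the local cohomology does not imply $H^{n+1}_{\Sigma,\KdR}(\overline X) = 0$, and likewise your asserted dimensions at $n=3$ (e.g.\ $h^4_\Sigma(\overline X) = \#\Sigma$, $h^4(\overline X)=1$) are unjustified in KdR; your parenthetical ``dR $=$ KdR on smooth varieties'' covers only the Gysin part, not these local terms. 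This is exactly why the paper argues algebraically: it moves the support into the affine chart via $H^{n+1}_\Sigma(\overline X) \cong H^{n+1}_\Sigma(X)$, then uses the excision sequence together with $H^n(X \setminus \Sigma) = 0$ (a computation on smooth varieties, where the theories agree) and the KdR-specific vanishing $H^{n+1}_\KdR(X) = 0$ for the affine hypersurface $X$ — the second half of Fact~\ref{F:affine}, proved there precisely for this purpose — and similarly derives the $n=3$ dimensions ($h^4(\overline X)$ from Lemma~\ref{L:singcoh}, $h^4_\Sigma(\overline X) = h^0(\Sigma)$ through compact-support sequences on smooth varieties). As written, your proof establishes the lemma for $\delta_\dR$ but not for $\delta_\KdR$; to close the gap, replace the Milnor-link step and the topological dimension claims by these algebraic computations.
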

\begin{proof} The proof consists of a few technical computations. We first assume that $n \geq 4$.
\begin{itemize} 
 \item $\overline X \setminus \Sigma$ is a smooth closed subvariety of codimension one in $\mathbb P^n \setminus \Sigma$. The corresponding Gysin sequence is
$$ \dots \to H^{n+1}(\mathbb P^n \setminus \overline X) \to H^n(\overline X \setminus \Sigma) \to H^{n+2}(\mathbb P^n \setminus \Sigma) \to H^{n+2}(\mathbb P^n \setminus \overline X) \to \dots$$
Since $\mathbb P^n \setminus \overline X$ is smooth and affine of dimension $n$, there is an isomorphism
$$ H^n(\overline X\setminus \Sigma) \cong H^{n+2}(\mathbb P^n \setminus \Sigma).$$
$\Sigma$ is a closed subvariety of codimension $n$ in $\mathbb P^n$. The associated Gysin sequence is
$$ \dots \to H^{n-2}_\dR(\Sigma) \to H^{n-2}(\mathbb P^n) \to H^{n-2}_{c,\dR}(\mathbb P^n \setminus \Sigma) \to H^{n-3}_\dR(\Sigma) \to \dots$$
Since $n \geq 4$, we can use $\dim \Sigma = 0$ to obtain
$$ h^{n+2}(\mathbb P^n \setminus \Sigma) = h^{n-2}_{c,\dR}(\mathbb P^n \setminus \Sigma) =  h^{n-2}(\mathbb P^n).$$
This shows that $h^n(\overline X \setminus \Sigma) = h^{n-2}(\mathbb P^n) = h^n(\mathbb P^n)$.
\item Since $\Sigma$ lies inside the affine part $X \subseteq \overline X$, $H^{n+1}_{\Sigma}(X) = H^{n+1}_{\Sigma}(\overline X)$. Using the two Gysin sequences for $X \setminus \Sigma \subseteq \mathbb A^n \setminus \Sigma$ and $\Sigma \subseteq \mathbb A^n$ gives
$$h^n(X \setminus \Sigma) = h^{n+2}(\mathbb A^n \setminus \Sigma) = h^{n-2}(\mathbb A^n) = 0.$$
On the other hand, $H^{n+1}(X) = 0$ since $X$ is an affine hypersurface of dimension $n-1$. The excision sequence for $\Sigma \subseteq X$ then yields $H^{n+1}_\Sigma(X) = 0$.
\item Putting this together,
$$ h^n(\overline X) =  \dim \coker \alpha + h^n(\overline X \setminus \Sigma) = \dim \coker \alpha + h^n(\mathbb P^n).$$
\item In the case $n = 3$, the long exact sequence in the statement of the lemma gives
$$ h^3(\overline X) = \dim \coker \alpha + h^3(\overline X \setminus \Sigma) - h^4_\Sigma(\overline X) + h^4(\overline X) - h^4(\overline X \setminus \Sigma) ,$$
where we used $H^5_\Sigma(\overline X) = 0$ for dimension reasons. Using Poincaré duality on $\overline X \setminus \Sigma$ and the compact support Gysin sequence
$$ 0 \to H^0_{c,\dR}(\overline X \setminus \Sigma) \to H^0_\dR(\overline X) \to H^0_{\dR}(\Sigma) \to H^1_{c,\dR}(\overline X \setminus \Sigma) \to H^1_{\dR}(X) = 0$$
we obtain
$$ h^3(\overline X \setminus \Sigma) - h^4(\overline X \setminus \Sigma) = h^1_{c,\dR}(\overline X \setminus \Sigma) - h^0_{c,\dR}(\overline X \setminus \Sigma) = h^0_\dR(\Sigma) - h^0_\dR(\overline X).$$
Note that $h^0_\dR(\overline X) = 1$ and $h^4(\overline X) = 1$ by Lemma~\ref{L:singcoh}. Thus
$$ h^3(\overline X) = \dim \coker \alpha + h^0_\dR(\Sigma)  - h^4_\Sigma(\overline X).$$
Moreover,
$$ h^4_\Sigma(\overline X) = h^4_\Sigma(X) = h^3(X \setminus \Sigma) = h^1_{c, \dR}(X \setminus \Sigma) = h^0_\dR(\Sigma),$$
the last step uses the compact support Gysin sequences
$$ \dots \to H^0_{c,\dR}(X) \to H^0_\dR(\Sigma) \to H^1_{c, \dR}(X \setminus \Sigma) \to H^1_{c, \dR}(X) \to \dots $$
and
$$ 0 = H^i_{c,\dR}(\mathbb A^n) \to H^i_{c,\dR}(X) \to H^{i+1}_{c,\dR}(\mathbb A^3 \setminus X) = 0, \quad i = 0, 1.$$
Consequently
\begin{align*}
 \delta(\overline X) = h^3(\overline X) = \dim \coker \alpha. & \qedhere
\end{align*}
\end{itemize}
\end{proof}

The open immersion $X \hookrightarrow \overline X$ induces a commutative ladder 
\begin{align*}
\begin{CD}
 \dots @>>> H^{n-1}(\overline X) @>>> H^{n-1}(\overline X\setminus \Sigma) @>\alpha>> H^n_\Sigma(\overline X) @>>> H^n(\overline X) @>>> \dots \\
@. @VVV @VVV @VVV @VVV\\
 \dots @>>> H^{n-1}(X) @>\vartheta >> H^{n-1}(X\setminus \Sigma) @>>> H^n_\Sigma(X) @>>> H^n(X) @>>> \dots 
\end{CD}
\end{align*}
of long exact sequences.

\begin{lemma}\label{L:Com2}
We have $\delta(\overline X) = \dim \coker \beta$, where 
$$\beta: H^{n-1}(\overline X \setminus \Sigma) \to H^{n-1}(X \setminus \Sigma)/\vartheta(H^{n-1}(X))$$
is the map induced by $X\setminus \Sigma \hookrightarrow \overline X \setminus \Sigma$.
\end{lemma}
\begin{proof}
 As $H^n(X) = 0$, the natural map $H^{n-1}(X \setminus \Sigma) \to H^n_\Sigma(X)$ is surjective. Since its kernel is given by the image of $\vartheta$,
$$H^{n-1}(X \setminus \Sigma)/\vartheta(H^{n-1}(X)) \to H^n_\Sigma(X)$$
is an isomorphism. The singular locus $\Sigma$ lies inside the affine part $X$, so the natural map $H^n_\Sigma(\overline X) \to H^n_\Sigma(X)$ is an isomorphism as well. Therefore $\coker \beta \cong \coker \alpha$, which finishes the proof by the preceding Lemma~\ref{L:Com1}.
\end{proof}

\begin{corollary}\label{C:defindep}
Let $\overline X \subseteq \mathbb P^n$ be a hypersurface with at most isolated singularities. Then $\delta_\dR(\overline X) = \delta_\KdR(\overline X)$. In particular $h^n_\dR(\overline X) = h^n_\KdR(\overline X)$.
\end{corollary}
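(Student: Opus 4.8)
The plan is to combine the cokernel description of defect from Lemma~\ref{L:Com2} with two features of the comparison map $H^\bullet_\dR \to H^\bullet_\KdR$: it is an isomorphism on smooth varieties, and it is functorial. Write $r$ for the restriction $H^{n-1}(\overline X\setminus\Sigma)\to H^{n-1}(X\setminus\Sigma)$ induced by the open immersion $X\setminus\Sigma\hookrightarrow\overline X\setminus\Sigma$. Unwinding $\coker\beta$ (the image of $\beta$ is the class of $r$ modulo $\vartheta(H^{n-1}(X))$), Lemma~\ref{L:Com2} reads, in either theory,
$$\delta(\overline X) = \dim\coker\beta = \dim\, H^{n-1}(X\setminus\Sigma)\big/\big(\vartheta(H^{n-1}(X)) + r(H^{n-1}(\overline X\setminus\Sigma))\big).$$
Thus the defect is the dimension of a quotient of $H^{n-1}(X\setminus\Sigma)$ by a sum of two subspaces, and the strategy is to compare this quotient across the two theories.

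First I would isolate the ingredients that already agree. Both $\overline X\setminus\Sigma$ and $X\setminus\Sigma$ are smooth (we removed all singular points of $\overline X$, and $X\setminus\Sigma$ is open in $\overline X\setminus\Sigma$), so the comparison map identifies $H^{n-1}_\dR$ with $H^{n-1}_\KdR$ on each of them, and by functoriality this identification is compatible with $r$. Hence both the ambient space $H^{n-1}(X\setminus\Sigma)$ and the subspace $r(H^{n-1}(\overline X\setminus\Sigma))$ are literally the same in the two theories. The only term that can differ is $\vartheta(H^{n-1}(X))$, since $X$ is genuinely singular.

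Next I would control this remaining term by functoriality of the comparison map along the open immersion $X\setminus\Sigma\hookrightarrow X$. This gives a commutative square relating $\vartheta_\dR$ and $\vartheta_\KdR$ through the comparison map $H^{n-1}_\dR(X)\to H^{n-1}_\KdR(X)$ and the isomorphism comparison map on $X\setminus\Sigma$. Reading off images shows $\vartheta_\dR(H^{n-1}_\dR(X))\subseteq \vartheta_\KdR(H^{n-1}_\KdR(X))$ under the identification of the two versions of $H^{n-1}(X\setminus\Sigma)$. Since enlarging the subspace one quotients by can only shrink the cokernel, this yields $\delta_\dR(\overline X)\geq\delta_\KdR(\overline X)$.

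Finally I would invoke the opposite inequality $\delta_\dR(\overline X)\leq\delta_\KdR(\overline X)$, already recorded after the definition of defect as a consequence of Theorem~\ref{T:KdRcomp}, to conclude equality; the claim $h^n_\dR(\overline X)=h^n_\KdR(\overline X)$ then follows because $\mathbb P^n$ is smooth, so $h^n_\dR(\mathbb P^n)=h^n_\KdR(\mathbb P^n)$. The main obstacle is precisely the term $\vartheta(H^{n-1}(X))$: it cannot be computed directly, since this is exactly where the extra Kähler-de Rham cohomology concentrated at the singularities enters. The point of the argument is therefore to avoid computing it, extracting only the one-sided containment that functoriality provides and pairing it with the cheap inequality coming from Theorem~\ref{T:KdRcomp}.
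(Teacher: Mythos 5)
Your argument is correct and is essentially the paper's own proof: the paper also reduces to the inequality $\delta_\dR(\overline X)\geq\delta_\KdR(\overline X)$ via the commutative square relating $\vartheta_\dR$ and $\vartheta_\KdR$ through the comparison map (an isomorphism on the smooth variety $X\setminus\Sigma$), obtaining a surjection $\coker\beta_\dR\twoheadrightarrow\coker\beta_\KdR$, and then invokes Theorem~\ref{T:KdRcomp} for the reverse inequality. Your unwinding of $\coker\beta$ as $H^{n-1}(X\setminus\Sigma)\big/\bigl(\vartheta(H^{n-1}(X))+r(H^{n-1}(\overline X\setminus\Sigma))\bigr)$, with the containment $\vartheta_\dR(H^{n-1}_\dR(X))\subseteq\vartheta_\KdR(H^{n-1}_\KdR(X))$ doing the work, is just a more explicit phrasing of that same surjection.
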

\begin{proof}
 By Theorem~\ref{T:KdRcomp}, it remains to show the inequality $\delta_\dR(\overline X) \geq \delta_\KdR(\overline X)$. To this end, note that the comparison map between algebraic and Kähler-de Rham cohomology yields a commutative diagram
 \begin{align*}
  \begin{CD}
  H^{n-1}_\dR(X) @>\vartheta_\dR >> H^{n-1}_\dR(X \setminus \Sigma) \\
   @VVV @VV\simeq V\\
  H^{n-1}_\KdR(X) @>\vartheta_\KdR>> H^{n-1}_\KdR(X \setminus \Sigma).
  \end{CD}
 \end{align*}
This gives a surjection
$$ H_\dR^{n-1}(X \setminus \Sigma)/\vartheta_\dR(H^{n-1}_\dR(X)) \twoheadrightarrow  H^{n-1}_\KdR(X \setminus \Sigma)/\vartheta_\KdR(H^{n-1}_\KdR(X)).$$
If $\beta_\dR, \beta_\KdR$ denote the two versions of the map $\beta$ of Lemma~\ref{L:Com2}, then this gives rise to a surjection $ \coker \beta_\dR \twoheadrightarrow  \coker \beta_\KdR$. Hence
$$ \delta_\dR(\overline X) = \dim \coker \beta_\dR \geq \dim \coker \beta_\KdR = \delta_\KdR(\overline X).$$
The reverse inequality follows from Theorem~\ref{T:KdRcomp}. \qedhere
\end{proof}

\begin{corollary}\label{C:Cone}
 Suppose that $H^{n-1}(X) = 0$. Then $h^n(\overline X) = h^{n-2}(\overline X \setminus X)$.
\end{corollary}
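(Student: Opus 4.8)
The plan is to reuse the excision-and-purity computation already carried out in the proof of Lemma~\ref{L:singcoh}, noting that the extra hypothesis $H^{n-1}(X) = 0$ is precisely what is needed to push the relevant isomorphism one degree further down, to $i = n$. Recall the standing notation: $\overline Y := \overline X \cap H = \overline X \setminus X$ is a smooth hypersurface in $H \cong \mathbb P^{n-1}$ of dimension $n-2$, it is disjoint from the singular locus $\Sigma$, and $X$ is an affine hypersurface of dimension $n-1$. In particular $H^n(X) = 0$ in both de Rham theories by Fact~\ref{F:affine}, so the only input beyond the general setup is the assumption $H^{n-1}(X) = 0$.

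First I would write down the excision sequence of Fact~\ref{F:Excision} for the closed subscheme $\overline Y \subseteq \overline X$ with open complement $X$, namely
$$\cdots \to H^{n-1}(X) \to H^n_{\overline Y}(\overline X) \to H^n(\overline X) \to H^n(X) \to \cdots.$$
Feeding in $H^{n-1}(X) = 0$ (the hypothesis) and $H^n(X) = 0$ (automatic) yields an isomorphism $H^n_{\overline Y}(\overline X) \cong H^n(\overline X)$. This is exactly the isomorphism exploited in the proof of Lemma~\ref{L:singcoh}, where it was available only for $i \geq n+1$; the whole role of the hypothesis is to extend it down to $i = n$.

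It then remains to identify $H^n_{\overline Y}(\overline X)$, and here I would simply transcribe the purity step of Lemma~\ref{L:singcoh}. Since $\overline Y$ is disjoint from $\Sigma$, the open subscheme $\overline X \setminus \Sigma$ contains $\overline Y$, so the second part of Fact~\ref{F:Excision} gives $H^n_{\overline Y}(\overline X) \cong H^n_{\overline Y}(\overline X \setminus \Sigma)$. Now $\overline Y$ is a smooth closed subvariety of codimension one in the smooth variety $\overline X \setminus \Sigma$, so Poincaré duality (Fact~\ref{F:Poincare}) on $\overline X \setminus \Sigma$ identifies
$$h^n_{\overline Y, \dR}(\overline X \setminus \Sigma) = h^{2n-2-n}(\overline Y) = h^{n-2}(\overline Y),$$
precisely as in Lemma~\ref{L:singcoh}; for Kähler-de Rham cohomology the same value results upon reducing to the smooth varieties $\overline X \setminus \Sigma$ and $X \setminus \Sigma$, on which the two theories agree. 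Composing the isomorphisms then gives $h^n(\overline X) = h^{n-2}(\overline Y) = h^{n-2}(\overline X \setminus X)$.

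There is no serious obstacle once Lemma~\ref{L:singcoh} is in hand: the genuinely new point is the bookkeeping observation that $H^{n-1}(X) = 0$ closes up the excision sequence at degree $n$, and the only care required is to run the purity computation simultaneously in both de Rham theories, which the argument of Lemma~\ref{L:singcoh} already shows how to do.
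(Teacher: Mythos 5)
Your proof is correct, but it takes a genuinely more direct route than the paper's. The paper stays inside its defect machinery: it invokes Lemma~\ref{L:Com2} (the hypothesis makes $\vartheta(H^{n-1}(X))$ vanish, so $\delta(\overline X)$ is the cokernel of the plain restriction $H^{n-1}(\overline X \setminus \Sigma) \to H^{n-1}(X \setminus \Sigma)$), places this cokernel in the excision sequence for $\overline X \setminus X \subseteq \overline X \setminus \Sigma$, and then feeds in the values $h^n(\overline X \setminus \Sigma) = h^n(\mathbb P^n)$ and $h^n(X \setminus \Sigma) = 0$ from the proof of Lemma~\ref{L:Com1}, together with $h^n(\overline X) = \delta(\overline X) + h^n(\mathbb P^n)$. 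You bypass Lemmas~\ref{L:Com1} and~\ref{L:Com2} entirely: excision on $\overline X$ itself, with the hypothesis $H^{n-1}(X) = 0$ and the automatic $H^n(X) = 0$ (Fact~\ref{F:affine}, valid in both theories since $X$ is an affine hypersurface), gives $H^n(\overline X) \cong H^n_{\overline Y}(\overline X)$ in one stroke; the remaining identification $h^n_{\overline Y}(\overline X) = h^n_{\overline Y}(\overline X \setminus \Sigma) = h^{n-2}(\overline Y)$ is the same excision-plus-Poincar\'e-duality step both arguments share, and your reduction of the K\"ahler-de Rham case to the smooth pair $(\overline X \setminus \Sigma, X \setminus \Sigma)$ is exactly the device already used in Lemma~\ref{L:singcoh}. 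Your route is shorter and uniform in $n \geq 3$, and this is a real advantage: the two complement Betti numbers the paper imports from the proof of Lemma~\ref{L:Com1} are established there only under the assumption $n \geq 4$ (for $n = 3$ that proof takes a detour, and a direct computation shows $h^3(X \setminus \Sigma)$ need not vanish), whereas your argument never needs them. What the paper's route buys in exchange is the formulation in terms of defect, $\delta(\overline X) = h^{n-2}(\overline X \setminus X) - h^n(\mathbb P^n)$, which is the form quoted in the remark on cones immediately following the corollary.
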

\begin{proof}
 By Lemma~\ref{L:Com2}, the number $\delta(X)$ equals the dimension of the cokernel of the restriction map 
 $$ H^{n-1}(\overline X \setminus \Sigma) \to H^{n-1}(X \setminus \Sigma),$$
 which fits into a long exact sequence
 $$ \dots \to H^{n-1}(\overline X \setminus \Sigma) \to H^{n-1}(X \setminus \Sigma) \to H^n_{\overline X \setminus X}(\overline X \setminus \Sigma) \to H^n(\overline X \setminus \Sigma) \to \dots$$
By Poincaré duality, $h^n_{\overline X \setminus X}(\overline X \setminus \Sigma) = h^{n-2}(\overline X \setminus X)$. By the proof Lemma~\ref{L:Com1}, $h^n(\overline X \setminus \Sigma) = h^n(\mathbb P^n)$ and $h^n(X \setminus \Sigma) = 0$. Therefore
\begin{align*}
h^n(\overline X) = \delta(\overline X) + h^n(\mathbb P^n) = h^n_{\overline X \setminus X}(\overline X \setminus \Sigma) - h^n(\overline X \setminus \Sigma) + h^n(\mathbb P^n) = h^{n-2}(\overline X \setminus X). & \qedhere
\end{align*}
\end{proof}
\begin{remark}
This gives several examples of hypersurfaces with defect: In particular, if $\overline X$ is the cone over a smooth projective hypersurface $Y \subseteq \mathbb P^{n-1}$, then $\delta(\overline X) = h^{n-2}(Y) - h^n(\mathbb P^n)$. For example, any cone over a nonsingular plane curve of positive genus has defect.
\end{remark}

We finish this section with another cohomological characterization of defect: Using the smooth Gysin sequences for $\overline X \setminus \Sigma \subseteq \mathbb P^n \setminus \Sigma$ and $X \setminus \Sigma \subseteq \mathbb A^n \setminus \Sigma$ respectively, we get a commutative diagram
\begin{align*}
\begin{CD}
\dots @>>> H^n(\mathbb P^n \setminus \overline X) @>>> H^{n-1}(\overline X \setminus \Sigma)@>>> \dots\\
@. @VVV @VVV\\
\dots @>>> H^n(\mathbb A^n \setminus X) @>\rho >> H^{n-1}(X \setminus \Sigma)@>>> \dots,
\end{CD}
\end{align*}
where $\rho$ is the Poincaré residue.

\begin{lemma}\label{L:Com3}
We have $\delta(\overline X) \leq \dim \coker \gamma$, where
$$\gamma: H^n(\mathbb P^n \setminus \overline X) \to H^n(\mathbb A^n \setminus X)/\rho^{-1}(\vartheta(H^{n-1}(X)))$$
is the map induced by the open immersion $\mathbb A^n \setminus X \hookrightarrow \mathbb P^n \setminus \overline X$. Moreover, equality holds if $n$ is even.
\end{lemma}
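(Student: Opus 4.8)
The plan is to exploit the commutative square of Poincaré residues displayed just above the statement, together with the two smooth Gysin sequences that produce it. Write $\rho_{\mathbb P}\colon H^n(\mathbb P^n\setminus\overline X)\to H^{n-1}(\overline X\setminus\Sigma)$ for the top residue and keep the paper's notation $\rho\colon H^n(\mathbb A^n\setminus X)\to H^{n-1}(X\setminus\Sigma)$ for the bottom one. Let $g$ and $r$ be the two vertical restriction maps, so that $\gamma$ is $g$ followed by the projection onto $H^n(\mathbb A^n\setminus X)/\rho^{-1}(\vartheta(H^{n-1}(X)))$, while $\beta$ from Lemma~\ref{L:Com2} is $r$ followed by the projection onto $H^{n-1}(X\setminus\Sigma)/\vartheta(H^{n-1}(X))$. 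Commutativity of the square reads $r\circ\rho_{\mathbb P}=\rho\circ g$.

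First I would pin down the cokernels of the two residues. Extending the Gysin sequence for $\overline X\setminus\Sigma\subseteq\mathbb P^n\setminus\Sigma$ one step to the right shows that $\coker\rho_{\mathbb P}$ embeds into $H^{n+1}(\mathbb P^n\setminus\Sigma)$, and likewise $\coker\rho$ embeds into $H^{n+1}(\mathbb A^n\setminus\Sigma)$. To evaluate these I would run the Gysin sequences for the codimension-$n$ inclusions $\Sigma\subseteq\mathbb P^n$ and $\Sigma\subseteq\mathbb A^n$: since $\dim\Sigma=0$, all relevant residue targets $H^j(\Sigma)$ with $j<0$ vanish, so for $n\geq 3$ one obtains $H^{n+1}(\mathbb A^n\setminus\Sigma)=0$ and $H^{n+1}(\mathbb P^n\setminus\Sigma)\cong H^{n+1}(\mathbb P^n)$. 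By Fact~\ref{F:AnPn} the latter is $0$ when $n$ is even and one-dimensional when $n$ is odd. Consequently $\rho$ is always surjective, while $\rho_{\mathbb P}$ is surjective whenever $n$ is even.

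The crucial reduction is that surjectivity of $\rho$ upgrades the induced map on quotients to an isomorphism. Writing $V:=\vartheta(H^{n-1}(X))$ and $W:=\rho^{-1}(V)$, the map $\rho$ descends to $\bar\rho\colon H^n(\mathbb A^n\setminus X)/W\to H^{n-1}(X\setminus\Sigma)/V$, which is injective by the definition of $W$ and surjective because $\rho$ is; hence $\bar\rho$ is an isomorphism. Pushing $r\circ\rho_{\mathbb P}=\rho\circ g$ through the two quotient projections yields $\beta\circ\rho_{\mathbb P}=\bar\rho\circ\gamma$. As $\bar\rho$ is an isomorphism it identifies $\coker\gamma$ with the cokernel of $\beta\circ\rho_{\mathbb P}$; and since $\im(\beta\circ\rho_{\mathbb P})=\beta(\im\rho_{\mathbb P})\subseteq\im\beta$, that cokernel surjects onto $\coker\beta$. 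Therefore $\dim\coker\gamma\geq\dim\coker\beta=\delta(\overline X)$ by Lemma~\ref{L:Com2}, which is the asserted inequality. When $n$ is even, the extra surjectivity of $\rho_{\mathbb P}$ forces $\im(\beta\circ\rho_{\mathbb P})=\im\beta$, so the surjection above is an isomorphism and equality holds.

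I expect the main obstacle to be bookkeeping rather than a single hard idea: one must thread both Gysin sequences correctly through the quotient by $\vartheta(H^{n-1}(X))$, verify that $\rho$ genuinely carries $W$ into $V$ so that $\bar\rho$ is well defined, and track the parity of $n$. It is precisely the nonvanishing of $H^{n+1}(\mathbb P^n)$ for odd $n$ that destroys surjectivity of $\rho_{\mathbb P}$, and with it the equality claim. A minor point to check carefully is that for $n\geq 3$ the negative-degree cohomology of the zero-dimensional scheme $\Sigma$ vanishes, so that the two computations of $H^{n+1}(\,\cdot\,\setminus\Sigma)$ go through as stated.
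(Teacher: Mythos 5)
Your proof is correct and follows essentially the same route as the paper: the same commutative square relating $\gamma$ to $\beta\circ\sigma$ (your $\beta\circ\rho_{\mathbb P}$) via the residue-induced isomorphism on quotients, the same Gysin computations of $H^{n+1}(\mathbb A^n\setminus\Sigma)=0$ and $H^{n+1}(\mathbb P^n\setminus\Sigma)\cong H^{n+1}(\mathbb P^n)$, and the same parity argument for equality when $n$ is even. The only (harmless) difference is that you get by with surjectivity of $\rho$, since injectivity of the induced map on quotients is automatic from $W=\rho^{-1}(V)$, whereas the paper also records $H^n(\mathbb A^n\setminus\Sigma)=0$ to make $\rho$ itself an isomorphism.
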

\begin{proof} 
One checks that $H^n(\mathbb A^n \setminus \Sigma) = H^{n+1}(\mathbb A^n \setminus \Sigma) = 0$, so $\rho$ is an isomorphism. We obtain a commutative diagram
\begin{align*}
\begin{CD}
 H^n(\mathbb P^n \setminus \overline X) @> \sigma >> H^{n-1}(\overline X \setminus \Sigma)\\
@V\gamma VV @VV \beta V\\
 H^n(\mathbb A^n \setminus X) / \rho^{-1}(\vartheta(H^{n-1}(X))) @>\simeq > \rho> H^{n-1}(X \setminus \Sigma) / \vartheta(H^{n-1}(X)),
\end{CD}
\end{align*}
where $\beta$ is as in Lemma~\ref{L:Com2}. Thus
$$ \dim \coker \gamma = \dim \coker (\beta \circ \sigma) \geq \dim \coker \beta = \delta(\overline X).$$
If $n$ is even, then the map $\sigma$ is surjective, since the preceding term $H^{n+1}(\mathbb P^n \setminus \Sigma)$ in the Gysin sequence vanishes. Hence in this case, $\coker(\beta \circ \sigma) = \coker \beta$. \qedhere
\end{proof}

\subsection{Differential forms on hypersurface complements}

We keep the notations from the previous subsection. Suppose that the hypersurface $\overline X$ is defined by the homogeneous polynomial $F \in K[x_0,\dots,x_n]_d$. Moreover, assume that the hyperplane $H$ is given by the vanishing of $x_0$. Let $f = F(1,x_1,\dots,x_n) \in K[x_1,\dots,x_n]$ denote the defining polynomial of $X$ in $\mathbb A^n = \mathbb P^n \setminus \{x_0 = 0\}$.

In view of Lemma~\ref{L:Com3}, the defect of $\overline X$ may be described by investigating the top-dimensional cohomology of the hypersurface complements $\mathbb P^n \setminus \overline X$ and $\mathbb A^n \setminus X$. Fortunately, these spaces can be explicitly described. Both varieties in question are smooth and affine of dimension $n$, so their $n$-th algebraic de Rham cohomology is just a quotient of the module of $n$-forms on their coordinate rings. More precisely:
\begin{lemma}\label{L:Explicit}\quad
\begin{enumerate}[{\normalfont(1)}] 
 \item  $H^n(\mathbb P^n \setminus \overline X)$ is generated by
 $$\left\{ \frac{G\Omega}{F^k} \middle| \, G \in K[x_0,\dots,x_n]_{kd-n-1}, k \geq 0 \right\},$$
 where
 $$ \Omega := \sum_{i=0}^n (-1)^i x_i \,dx_1 \wedge \dots \wedge \widehat{dx_i} \wedge \dots \wedge dx_n.$$
\item  $H^n(\mathbb A^n \setminus X)$ is generated by
$$\left\{ \frac{g\omega}{f^k} \middle| \, g \in K[x_1,\dots,x_n], k \geq 0 \right\},$$
where $\omega := dx_1 \wedge \dots \wedge dx_n$.
\item The natural restriction map is given by
$$H^n(\mathbb P^n \setminus \overline X) \to H^n(\mathbb A^n \setminus X), \quad \left[\frac{G\Omega}{F^k}\right] \to  \left[\frac{g\omega}{f^k} \right],$$
where $g$ is the dehomogenization of $G$.
\end{enumerate}
\end{lemma}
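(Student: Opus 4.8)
The plan is to reduce all three parts to an explicit description of the \emph{global} algebraic $n$-forms on the two complements, and then to pull these forms back through the chart $\{x_0 = 1\}$. Both $U := \mathbb P^n \setminus \overline X$ and $V := \mathbb A^n \setminus X$ are smooth and affine of dimension $n$ (the complement of a hypersurface in $\mathbb P^n$ is affine). Hence, by Grothendieck's theorem that the algebraic de Rham cohomology of a smooth affine variety is computed by the complex of global Kähler differentials, and since $\Omega^j = 0$ in degrees $j > n$, one has $H^n(U) = \coker\!\left(d \colon \Omega^{n-1}(U) \to \Omega^n(U)\right)$ and likewise for $V$. In particular $H^n$ is a quotient of the space of global $n$-forms, so it suffices to identify the latter. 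For $V$ this is immediate: its coordinate ring is $K[x_1,\dots,x_n][1/f]$, whose module of top forms is free of rank one on $\omega = dx_1 \wedge \dots \wedge dx_n$, so every global $n$-form equals $(g/f^k)\,\omega$ with $g \in K[x_1,\dots,x_n]$ and $k \geq 0$. This settles part~(2).

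For part~(1) I would realize $\Omega^n(U)$ inside the one-dimensional $K(\mathbb P^n)$-vector space of rational $n$-forms on $\mathbb P^n$: since $U$ is open and dense, a global form on $U$ is just a rational $n$-form whose polar divisor is supported on $\overline X = \{F = 0\}$. The key point is that $\Omega$ is a \emph{homogeneous} generator of these rational forms: it trivializes $\omega_{\mathbb P^n}(n+1) \cong \mathcal O_{\mathbb P^n}$ and carries homogeneous degree $n+1$. Thus any rational $n$-form may be written $R\,\Omega$ with $R$ homogeneous of degree $-(n+1)$, and demanding that the poles lie only along $\overline X$ forces $R = G/F^k$ for some $k \geq 0$ and homogeneous $G$; choosing $k$ large enough to absorb the pole order and reading off degrees, $\deg G - kd = -(n+1)$ gives $\deg G = kd - n - 1$. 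Hence every global $n$-form on $U$ has the asserted shape, which proves part~(1).

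Finally, part~(3) is the pullback along the open immersion $V \hookrightarrow U$, which on forms is restriction to the chart $\{x_0 = 1\}$. There $dx_0 = 0$, so the only summand of $\Omega$ surviving the restriction is the one not involving $dx_0$, namely the $i = 0$ term $x_0\, dx_1 \wedge \dots \wedge dx_n$; thus $\Omega|_{x_0 = 1} = \omega$. Simultaneously $F \mapsto f$ and $G \mapsto g$ by dehomogenization, so $G\Omega/F^k \mapsto g\omega/f^k$, as claimed.

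The main obstacle is the bookkeeping in part~(1): pinning down $\Omega$ as a homogeneous degree-$(n+1)$ generator of the rational $n$-forms, and verifying that the two conditions ``regular on $U$'' and ``descends to $\mathbb P^n$'' translate \emph{precisely} into the stated bound on the pole order and the degree of $G$. Parts~(2) and~(3) are then essentially formal, the latter being a one-line chart computation.
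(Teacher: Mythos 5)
Your proof is correct and matches the paper's route: the paper itself notes that both complements are smooth affine so $H^n$ is a quotient of the global $n$-forms, declares (2) and (3) immediate, and refers (1) to Dimca's book, whose Chapter~6 argument is exactly your computation with the Euler form $\Omega$ trivializing $\omega_{\mathbb P^n}(n+1)$ and the degree count $\deg G = kd-n-1$. Nothing essential is missing.
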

\begin{proof}
 (2) and (3) are immediate. For (1), see e.g. \cite{Dimca}*{Chapter 6}.
\end{proof}

\subsection{Pole-order filtration}
In the notation of Lemma~\ref{L:Com3}, let $V := \rho^{-1}(\vartheta(H^{n-1}(X)))$. For $k \geq 0$, define the pole-order filtration $P^k$ on $H^n(\mathbb P^n\setminus \overline X)$ resp.\ $H^n(\mathbb A^n\setminus X)/V$ as the image of differential forms of the type $G\Omega/F^k$ resp.\ $g\omega/f^k$. Since $G\Omega/F^k = FG\Omega/F^{k+1}$ and similarly in the affine case, these are ascending filtrations. Note that these are slightly different to the ones given in Dimca's article \cite{Dimca-Betti}.

The pole-order filtration gives rise to the $k$-th graded objects
\begin{align*}
  \Gr_P^k H^n(\mathbb P^n\setminus \overline X) &:= P^kH^n(\mathbb P^n\setminus \overline X)/P^{k-1}H^n(\mathbb P^n\setminus \overline X),\\
   \Gr_P^k H^n(\mathbb A^n\setminus X) &:= P^kH^n(\mathbb A^n\setminus X)/P^{k-1}H^n(\mathbb A^n\setminus X), \quad k\geq 0,
\end{align*}
with the convention that $P^{-1} = \{0\}$. The natural restriction
$$\gamma: H^n(\mathbb P^n \setminus \overline X) \to H^n(\mathbb A^n \setminus X)/V$$
induces maps  $\Gr_P^k(\gamma)$ on the corresponding graded objects. In view of Lemma~\ref{L:Com3}, there is an immediate corollary:

\begin{corollary}\label{C:Graded}
If $\overline X$ has defect, then there is an integer $k \geq 0$ such that $\Gr^k_P(\gamma)$
is not surjective.
\end{corollary}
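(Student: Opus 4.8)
The plan is to recognize this as a purely formal consequence of the fact that $\gamma$ is a morphism of filtered vector spaces, combined with the inequality supplied by Lemma~\ref{L:Com3}. First I would observe that, since $\overline X$ has defect, Lemma~\ref{L:Com3} gives $\dim \coker \gamma \geq \delta(\overline X) > 0$, so that $\gamma$ itself fails to be surjective. The task is then to transfer this non-surjectivity to one of the graded maps $\Gr^k_P(\gamma)$.

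To do this I would invoke the standard principle that a filtration-preserving linear map between exhaustively (ascendingly) filtered vector spaces is surjective whenever all of its associated graded maps are surjective; the desired statement is exactly the contrapositive. Concretely, I would first check the two hypotheses of this principle. The filtrations $P^\bullet$ are ascending by construction (multiplication by $F$, resp.\ $f$, raises the pole order), and they are exhaustive: by Lemma~\ref{L:Explicit} every class in $H^n(\mathbb P^n \setminus \overline X)$, resp.\ $H^n(\mathbb A^n \setminus X)$, is represented by some $G\Omega/F^k$, resp.\ $g\omega/f^k$, hence lies in $P^k$ for $k$ sufficiently large, and this property descends to the quotient $H^n(\mathbb A^n \setminus X)/V$. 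Moreover $\gamma$ sends $[G\Omega/F^k]$ to the class of $[g\omega/f^k]$ by Lemma~\ref{L:Explicit}(3), so $\gamma(P^k) \subseteq P^k$ and $\gamma$ genuinely induces the maps $\Gr^k_P(\gamma)$.

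The remaining ingredient is the short induction establishing the principle itself. Assuming every $\Gr^k_P(\gamma)$ is surjective, I would prove that $\gamma(P^k) = P^k$ on the target for all $k \geq 0$ by induction on $k$, with base case $P^{-1} = 0$. In the inductive step, given $y$ in $P^k$ of the target, surjectivity of $\Gr^k_P(\gamma)$ yields $x$ in $P^k$ of the source with $\gamma(x) - y \in P^{k-1}$, and the inductive hypothesis writes $\gamma(x) - y = \gamma(x')$ for some $x'$ in $P^{k-1}$; then $y = \gamma(x - x')$ lies in $\gamma(P^k)$. Exhaustiveness of the target filtration then forces $\gamma$ to be surjective, contradicting the first paragraph. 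I do not expect a genuine obstacle here; the points demanding the most care are confirming exhaustiveness of the induced filtration on the quotient $H^n(\mathbb A^n \setminus X)/V$ and checking that the indexing truly begins at $k = 0$, so that $P^{-1} = 0$ supplies the base of the induction, both of which are immediate from the definitions.
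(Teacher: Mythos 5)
Your proposal is correct and follows exactly the route the paper intends: the paper derives Corollary~\ref{C:Graded} as an ``immediate'' consequence of Lemma~\ref{L:Com3}, with the standard fact that surjectivity of all graded pieces $\Gr^k_P(\gamma)$ of a filtration-preserving map between exhaustively, ascendingly filtered spaces forces surjectivity of $\gamma$ itself left implicit. You have merely written out that routine induction (correctly, including the base case $P^{-1}=0$ and exhaustiveness of the quotient filtration), so there is nothing to add.
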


Set $S := K[x_0,\dots,x_n]$ and $R := K[x_1,\dots,x_n]$. The explicit description of the cohomology groups given in Lemma~\ref{L:Explicit} yields a commutative diagram
\begin{align*}
 \begin{CD}
  S_{kd-n-1} @>>> R \\
@VVV @VV\varphi V\\
\Gr^k_P H^n(\mathbb P^n \setminus X) @>\Gr^k_P(\gamma)>> \Gr^k_P (H^n(\mathbb A^n \setminus X)/V)
 \end{CD}
\end{align*}
for any $k \geq 0$ with surjective vertical arrows and the horizontal arrows being the natural restriction maps. We can actually make the top right corner smaller:

\begin{lemma}\label{L:Jacobian}
Let $\varphi: R \to \Gr^k_P (H^n(\mathbb A^n \setminus X)/V)$ be as in the above diagram. Let $J(f)$ denote the ideal in $R$ spanned by the partial derivatives of $f$. Then:
\begin{enumerate}[{\normalfont(1)}]
 \item For $k \geq 2$, the map $\varphi$ factors through $R/((f) + J(f))$.
 \item For $k = 1$, the map $\varphi$ factors through $R/((f)+J(f)^3)$.
 \item $\Gr^0_P H^n(\mathbb P^n \setminus \overline X) = \Gr^0_P (H^n(\mathbb A^n \setminus X)/V) = 0$. 
\end{enumerate}
\end{lemma}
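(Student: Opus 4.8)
The plan is to run the Griffiths pole-order reduction on the affine complement $\mathbb{A}^n\setminus X$. Writing $f_i:=\partial f/\partial x_i$ and $\omega_i:=dx_1\wedge\cdots\wedge\widehat{dx_i}\wedge\cdots\wedge dx_n$, the engine is the elementary computation that for $\eta=\sum_{i=1}^n(-1)^{i-1}\frac{a_i}{f^{k-1}}\omega_i$ with $a_i\in R$ one has
$$ d\eta = \frac{\sum_i \partial a_i/\partial x_i}{f^{k-1}}\,\omega \;-\;(k-1)\,\frac{\sum_i a_i f_i}{f^k}\,\omega. $$
Since $[d\eta]=0$ in $H^n(\mathbb{A}^n\setminus X)$, this is the only source of relations among the generators $g\omega/f^k$, and it drives all three parts. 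For (3) I would argue separately on the two sides. On the projective side $P^0H^n(\mathbb{P}^n\setminus\overline X)$ is the image of the forms $G\Omega$ with $G\in S_{0\cdot d-n-1}=S_{-n-1}=0$, so it vanishes for degree reasons. On the affine side every polynomial $n$-form $g\omega$ is closed and, since $H^n_\dR(\mathbb{A}^n)=0$ by Fact~\ref{F:AnPn}, admits a polynomial primitive; restricting this primitive to $\mathbb{A}^n\setminus X$ shows $[g\omega]=0$ already in $H^n(\mathbb{A}^n\setminus X)$, hence $P^0=0$ there and a fortiori in the quotient by $V$.

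For (1) with $k\geq 2$, an element $g\omega/f^k$ with $g\in(f)$ visibly drops to pole order $k-1$, hence dies in $\Gr^k_P$. If instead $g=\sum_i a_i f_i\in J(f)$, the displayed identity gives $(k-1)[g\omega/f^k]=[(\sum_i\partial a_i/\partial x_i)\,\omega/f^{k-1}]\in P^{k-1}$; as $K$ has characteristic zero and $k-1\neq 0$, the factor $k-1$ is invertible, so $[g\omega/f^k]\in P^{k-1}$ and again vanishes in $\Gr^k_P$. By linearity $\varphi$ then annihilates the whole ideal $(f)+J(f)$ and factors through the Tjurina algebra $R/((f)+J(f))$.

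The main obstacle is (2), where $k=1$ makes the coefficient $k-1$ vanish and the Jacobian reduction collapses: reduction lowers pole order only through that factor, so at $k=1$ it can move a class only into $P^0$, and to kill a genuine pole-order-one class one is forced to use $V$. Here I would pass through the residue. By the proof of Lemma~\ref{L:Com3} the Poincaré residue $\rho\colon H^n(\mathbb{A}^n\setminus X)\to H^{n-1}(X\setminus\Sigma)$ is an isomorphism, and since $P^0=0$ in the quotient, $\varphi(g)$ vanishes exactly when $\rho[g\omega/f]$ lies in $\vartheta(H^{n-1}(X))=\ker\big(H^{n-1}(X\setminus\Sigma)\to H^n_\Sigma(X)\big)$. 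As $H^n_\Sigma(X)=\bigoplus_{x\in\Sigma}H^n_{\{x\}}(X)$, this becomes a local assertion at each singular point: the residue class of $g\omega/f$ must be locally trivial at every $x\in\Sigma$ when $g\in J(f)^3$.

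On a chart $\{f_i\neq 0\}$ the Gelfand–Leray residue of $g\omega/f$ is $(-1)^{i-1}(g/f_i)\,\omega_i|_X$; forming it consumes one factor of $J(f)$, so that for $g\in J(f)^3$ the coefficient $g/f_i$ lies in the localized ideal $J(f)^2\subseteq\mathfrak m_x^2$ (here $J(f)\subseteq\mathfrak m_x$ because $x$ is a critical point). The remaining order-two vanishing at the isolated singularity is what should force the residue class to extend across $x$, i.e.\ to die in $H^n_{\{x\}}(X)$. Making this precise—pinning down exactly how much vanishing of the residue form kills its local class and checking that the square of the Jacobian ideal supplies it—is the heart of the matter and the step I expect to be hardest; the power $3$ is then transparently $1$ (to clear the Gelfand–Leray denominator) plus $2$ (for the order-two vanishing).
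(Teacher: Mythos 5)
Parts (1) and (3) of your proposal are correct and are essentially the paper's own argument: the same pole-order reduction identity, the same use of characteristic zero to invert $k-1$, the vanishing $S_{-n-1}=0$ on the projective side, and the exactness of all polynomial $n$-forms on the affine side. The genuine gap is in (2). You correctly reduce the claim --- exactly as the paper does --- to showing that for $g\in(f)+J(f)^3$ the class $[g\omega/f]$ lies in $V=\rho^{-1}(\vartheta(H^{n-1}(X)))$, i.e.\ that its residue dies in each $H^n_{\{x\}}(X)$, but you then substitute an unproven local heuristic for the proof, and that heuristic is unsound. First, near a singular point $x$ all the $f_i$ vanish, so the chartwise expression $(-1)^{i-1}(g/f_i)\,\omega_i$ is not defined on any neighborhood of $x$; the statement ``$g/f_i$ lies in $\mathfrak m_x^2$'' does not literally parse at the one point that matters. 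Second, and more seriously, fixed-order vanishing does not kill classes in $\Gr^1$ of the local pole filtration: for a weighted homogeneous singularity with all weights $1$ and local degree $a$, Dimca's computation (used later in the paper) shows that $\Gr^1_{P_x}H^n(\Omega^\bullet_{f,x})$ is spanned by classes $[g/f\,dx_1\wedge\cdots\wedge dx_n]$ with $\deg g = a-n$, so for large $a$ there are nonzero local classes with $g\in\mathfrak m_x^2$ (indeed $\mathfrak m_x^{a-n}$). Hence ``order-two vanishing forces the residue class to extend across $x$'' is false as a general principle; what makes $J(f)^3$ work is its Jacobian structure, not its order of vanishing.

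The idea you are missing is a short \emph{global} computation, with no local analysis at all. Write $g=fh'+\sum_{i,j,k}h_{ijk}f_if_jf_k$, set $h_i:=\sum_{j,k}h_{ijk}f_jf_k\in J(f)^2$ and $\eta:=\sum_i(-1)^i h_i\,dx_1\wedge\cdots\wedge\widehat{dx_i}\wedge\cdots\wedge dx_n$. Then $\eta\wedge df=(g-fh')\,\omega$ up to a global sign, and by the Leibniz rule every term of $\sum_i\partial h_i/\partial x_i$ retains a factor $f_j$ or $f_k$, so $d\eta\in J(f)\,\Omega^n_R$, i.e.\ $d\eta=\zeta\wedge df$ for some $\zeta\in\Omega^{n-1}_R$. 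Thus $\eta$ restricts to a closed form on $X$, defining a class in $H^{n-1}(X)$, and $[g\omega/f]=[\eta\wedge df/f]+[h'\omega]=\rho^{-1}(\vartheta(\eta))\in V$, since $[h'\omega]=0$ by part (3). This also corrects your accounting of the exponent: one factor of $J(f)$ is consumed in writing $g$ as a combination of the $f_i$, and the remaining \emph{square} is needed so that one differentiation still leaves a factor in $J(f)$ --- not ``one for the Gelfand--Leray denominator plus two for order-two vanishing.''
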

\begin{proof} 
If $g \in (f)+ J(f)$, then there are polynomials $h_0, \dots, h_n$ such that $$g = h_0 f + \sum_{i=1}^n h_i \frac{\partial f}{\partial x_i}.$$
The class of $h_0 f \omega/f^k$ vanishes in the graded object $\Gr_P^k$ by definition. One computes that
$$d\left( (-1)^i \frac{h_i}{f^{k-1}} dx_1 \wedge \dots \wedge \widehat{dx_i} \wedge \dots \wedge dx_n \right) = (k-1) \cdot h_i \cdot \frac{\partial f}{\partial x_i} \cdot \frac{\omega}{f^k } - \frac{\partial h_i}{\partial x_i} \cdot \frac{\omega}{f^{k-1}}.$$
Hence if $k \geq 2$, we can rewrite the cohomology class of $h_i \frac{\partial f}{\partial x_i} \omega/f^k$ as the class of a differential form with lower pole order. But such classes vanish in the graded object $Gr^k_P$ by definition of the pole-order filtration.

For $k = 0$ observe at first that $\Gr^0_P H^n(\mathbb P^n \setminus \overline X) $ is generated by $S_{n-1} = 0$. If $h \in R$ is any polynomial, then the above relation for $k = 1$ shows that all forms of the type $\frac{\partial h}{\partial x_i} \omega$ vanish in $\Gr^0_P (H^n(\mathbb A^n \setminus \overline X)/V) $. But any form can be written in this way.

If $k = 1$, we cannot apply the pole-order reduction trick anymore. However, we can use the space $V$: Let $\eta \in \Omega^{n-1}_R$ be a global $(n-1)$-form. Then the class of $\eta$ in $\Omega^{n-1}_{R/(f)}$ lies in the kernel of $d:\Omega^{n-1}_{R/(f)} \to \Omega^{n}_{R/(f)}$ if and only if $d \eta = f \xi + \zeta \wedge df$ for some $\xi \in \Omega^n_R, \zeta \in \Omega^{n-1}_R$. Such an $\eta$ defines a cohomology class in $H^{n-1}(X)$.

In the notation of Lemma~\ref{L:Com2}, restricting to the open $X \setminus \Sigma$ via $\vartheta$ and applying the inverse of the Poincaré residue map $\rho$ (see \cite{Hartshorne-Ample}*{ Theorem~III.8.3}), we get a map
$$\rho^{-1} \circ \vartheta: W := \{\eta \in \Omega^{n-1}_R \mid \exists\, \xi \in \Omega^n_R, \zeta \in \Omega^{n-1}_R: d \eta = f \xi + \zeta \wedge df \} \to V, \quad \eta \mapsto \left[\frac{\eta \wedge df}{f}\right].$$
In particular, all forms inside the image of this map will vanish in $\Gr^1_P (H^n(\mathbb A^n \setminus X)/V)$.

We will now give a description in terms of polynomials: Write  $$\eta := \sum_{i=1}^n (-1)^i h_i \cdot  dx_1 \wedge \dots \wedge \widehat{dx_i} \wedge \dots \wedge dx_n, \quad h_i \in R.$$ Then
$$ \eta \wedge df = \sum_{i=1}^n h_i \frac{\partial f}{\partial x_i} \omega
 \quad \text{ and } \quad
 d\eta = \sum_{i=1}^n \frac{\partial h_i}{\partial x_i}.$$
Now let $g \in (f) + J(f)^3$. Then
$$ g = fh' + \sum_{i,j,k=1}^n h_{ijk} \frac{\partial f}{\partial x_i} \frac{\partial f}{\partial x_j} \frac{\partial f}{\partial x_k} = fh'+ \sum_{i=1}^n \left(\sum_{j,k=1}^n h_{ijk} \frac{\partial f}{\partial x_j} \frac{\partial f}{\partial x_k}\right)  \frac{\partial f}{\partial x_i}, \quad h', h_{ijk} \in R$$
and $$\sum_{i=1}^n \frac{\partial}{\partial x_i} \left(\sum_{j,k=1}^n h_{ijk} \frac{\partial f}{\partial x_j} \frac{\partial f}{\partial x_k}\right)\in J(f).$$
Thus if $(g-fh')\omega = \eta \wedge df$ as above, then $d\eta = \zeta \wedge df$ for some $\zeta$ and hence $\eta \in W$. In particular, inside $H^n(\mathbb A^n \setminus X)$,
$$ \left[\frac{g \omega}{f} \right] = \left[\frac{(g-fh') \omega}{f} \right] + \left[\frac{f h' \omega}{f} \right] = \left[\frac{\eta \wedge df}{f} \right] + [h' \omega] = [\rho^{-1}(\vartheta(\eta))] + 0 \in V.$$
Consequently, the map $\varphi$ factors through $R/((f)+J(f)^3)$.
\end{proof}

\subsection{Defect and Tjurina number}

We are now in shape to prove the main theorem of this section.
\begin{theorem}\label{T:milnor}
 Let $\tau := \dim_K K[x_1,\dots,x_n]/((f) + J(f))$ be the global Tjurina number of $\overline X$. If $\overline X$ has defect, then
   $$\tau \geq \frac{d-n+1}{n^2+n+1}.$$
   Moreover, if the map $\Gr_P^k(\gamma)$ is not surjective for some $k \geq 2$, then $$\tau \geq kd-n+1.$$
\end{theorem}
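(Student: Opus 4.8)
The plan is to feed the non-surjectivity coming from Corollary~\ref{C:Graded} into the commutative square preceding Lemma~\ref{L:Jacobian} and read off a length estimate for a Tjurina-type algebra. If $\overline X$ has defect, Corollary~\ref{C:Graded} produces an integer $k \geq 0$ with $\Gr_P^k(\gamma)$ not surjective, and Lemma~\ref{L:Jacobian}(3) rules out $k = 0$, so $k \geq 1$. In that square the top arrow $S_{kd-n-1} \to R$ is dehomogenization, whose image is exactly the space $R_{\leq kd-n-1}$ of polynomials of degree $\leq kd-n-1$, and both vertical maps are surjective. Chasing the square, $\Gr_P^k(\gamma)$ is surjective if and only if $\varphi(R_{\leq kd-n-1}) = \varphi(R)$. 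Hence non-surjectivity forces the total-degree filtration $F_m := \im(R_{\leq m})$ on the relevant quotient algebra not to have filled the whole algebra by $m = kd-n-1$.

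First I would treat $k \geq 2$, where Lemma~\ref{L:Jacobian}(1) tells us $\varphi$ factors through the Tjurina algebra $T := R/((f)+J(f))$ via a \emph{surjection} $\bar\varphi$. Then non-surjectivity of $\Gr_P^k(\gamma)$ forces $F_{kd-n-1}T \subsetneq T$. The key step is to pass to the associated graded ring $\mathrm{gr}(T) \cong R/\mathrm{in}(I)$ with respect to leading forms of $I = (f)+J(f)$: this is a \emph{standard} graded $K$-algebra, so $\mathrm{gr}(T)_{j+1} = \mathrm{gr}(T)_1 \cdot \mathrm{gr}(T)_j$, and therefore its Hilbert function has no internal zeros — it is positive on an interval $[0,s]$ and vanishes afterwards. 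The condition $F_{kd-n-1}T \subsetneq T$ says precisely that $s \geq kd-n$, whence $\tau = \dim T = \sum_{j=0}^s \dim \mathrm{gr}(T)_j \geq s+1 \geq kd-n+1$. This proves the ``moreover'' clause, and since $kd-n+1 \geq 2d-n+1$ dominates $\tfrac{d-n+1}{n^2+n+1}$ whenever the latter is positive (and the bound is vacuous otherwise), it also settles the main estimate in this range.

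It remains to handle the witnessing level $k = 1$, which is where the denominator $n^2+n+1$ enters. Here Lemma~\ref{L:Jacobian}(2) only lets $\varphi$ factor through the larger algebra $T_3 := R/((f)+J(f)^3)$. The identical filtration/associated-graded argument, applied to $T_3$, yields $\dim T_3 \geq d-n+1$. To convert this into a bound on $\tau$ I would compare $T_3$ with $T$ through the $J(f)$-adic filtration on $R/(f)$: writing $\bar{\mathfrak j}$ for the image of $J(f)$ in $T_3$, one has $\bar{\mathfrak j}^3 = 0$ and
\begin{align*}
 \dim T_3 = \dim\left(T_3/\bar{\mathfrak j}\right) + \dim\left(\bar{\mathfrak j}/\bar{\mathfrak j}^2\right) + \dim\left(\bar{\mathfrak j}^2/\bar{\mathfrak j}^3\right).
\end{align*}
The first summand is $\dim R/((f)+J(f)) = \tau$; the second is a module over $T$ generated by the $n$ partials $\partial f/\partial x_i$, hence of dimension $\leq n\tau$; the third is generated over $T$ by the $n^2$ products $(\partial f/\partial x_i)(\partial f/\partial x_j)$, hence of dimension $\leq n^2\tau$. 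Adding up gives $\dim T_3 \leq (n^2+n+1)\tau$, and combining with $\dim T_3 \geq d-n+1$ yields $\tau \geq \tfrac{d-n+1}{n^2+n+1}$.

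The main obstacle I anticipate is the bookkeeping around the associated graded: one must check carefully that non-surjectivity really translates to the total-degree filtration stalling strictly below $T$ (resp.\ $T_3$), and that $\mathrm{gr}$ of a filtered quotient of the polynomial ring is standard graded, so that the ``no internal zeros'' principle applies and upgrades a statement about the top degree $s$ into the linear length bound $\tau \geq s+1$. The $k=1$ comparison is then routine once one views $\bar{\mathfrak j}/\bar{\mathfrak j}^2$ and $\bar{\mathfrak j}^2/\bar{\mathfrak j}^3$ as $T$-modules with the evident generating sets; the only point requiring care is that the $R$-action on these subquotients descends to $T$.
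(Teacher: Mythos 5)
Your proposal is correct and follows essentially the same route as the paper: witness level $k \geq 1$ via Corollary~\ref{C:Graded} and Lemma~\ref{L:Jacobian}, the bound $\tau \geq kd-n+1$ for $k \geq 2$ from the degree at which the image of $S_{kd-n-1}$ stalls in the Tjurina algebra, and for $k=1$ the comparison $\dim_K R/((f)+J(f)^3) \leq (n^2+n+1)\tau$ via the $J(f)$-adic subquotients generated by $n$ and $n^2$ elements. The only deviation is cosmetic: where the paper cites Poonen's trick \cite{Poonen-Bertini}*{Lemma~2.1(b)} for the strict growth of the images $F_m$, you reprove it inline via the standard-graded ring $R/\mathrm{in}(I)$ having no internal zeros in its Hilbert function (and your use of $\bar{\mathfrak j}^i/\bar{\mathfrak j}^{i+1}$ in place of the paper's $J(f)^i/J(f)^{i+1}$ is in fact the cleaner formulation of the same estimate).
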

\begin{proof}
 Since $\overline X$ has defect, there is an integer $k \geq 0$ such that $\Gr^k_P(\gamma)$ is not surjective by Corollary~\ref{C:Graded}. Using Lemma~\ref{L:Jacobian}~(3), we can assume that $k \geq 1$.
 
 Assume first that $k \geq 2$. Then the non-surjectivity of some $\Gr^k_P(\gamma)$ implies the non-surjectivity of the natural restriction map
$ S_{kd-n-1} \to T(f)$, where $T(f) := R/((f) + J(f))$ denotes the global Tjurina algebra of $f$. Since $\dim \Sigma = 0$, $T(f)$ is a finite-dimensional $K$-algebra. Applying Poonen's trick \cite{Poonen-Bertini}*{Lemma~2.1(b)} shows that the image of $S_i$ in $T(f)$ stricty increases with $i$ until it fills the whole space. In particular, the restriction map has to be surjective for $i \geq \tau - 1$. From this, one infers that $kd - n - 1 \leq \tau - 2$, whence $\tau \geq kd - n + 1$.

If $k = 1$, then the same argument shows that $\dim_K R/((f) + J(f)^3) \geq d - n + 1$. Using the exact sequences of $T(f)$-modules
$$ 0 \to J(f)^i/J(f)^{i+1} \to R/((f) + J(f)^{i+1}) \to R/((f)+J(f)^i) \to 0 $$
for $i = 1, 2$, we obtain
$$ \dim_K R/((f) + J(f)^3) = \dim_K J(f)^2/J(f)^3 + \dim_K J(f)/J(f)^2 + \dim_K T(f).$$
Since $J(f)^i/J(f)^{i+1}$ can be generated by $n^i$ elements, it has length at most $n^i$ as $T(f)$-module. Thus
\begin{align*}
  d - n + 1 \leq \dim_K R/((f) + J(f)^3) \leq n^2 \tau + n \tau + \tau = (n^2+n+1)\cdot \tau. & \qedhere
\end{align*}
\end{proof}

\subsection{Local computations}

In the case that the singularities of $\overline X$ are weighted homogeneous, we can use the methods of Dimca \cite{Dimca-Milnor} to improve the bound of Theorem~\ref{T:milnor}:

\begin{lemma} In the notations of Lemma~\ref{L:Jacobian}, suppose that $\overline X$ has only weighted homogeneous singularities. Then the natural map
$$ \varphi: R \to \Gr_P^1(H^n(\mathbb A^n \setminus X)/V)$$
factors through $R/((f) + J(f))$.
\end{lemma}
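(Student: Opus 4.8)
The plan is to upgrade part~(2) of Lemma~\ref{L:Jacobian} --- which only yields a factorization through $R/((f)+J(f)^3)$ --- to a factorization through the Tjurina algebra $T(f)=R/((f)+J(f))$. Since $\varphi(g)=[g\omega/f]$ and $[h'f\omega/f]=[h'\omega]=0$ in $H^n(\mathbb A^n\setminus X)$ (regular $n$-forms on $\mathbb A^n$ are exact), the ideal $(f)$ already lies in $\ker\varphi$, so it is enough to show that each generator $h\,\tfrac{\partial f}{\partial x_i}$ of $J(f)$ satisfies $[h\,\tfrac{\partial f}{\partial x_i}\,\omega/f]\in V$. Writing an $(n-1)$-form as $\eta=\iota_v\omega$ for a polynomial vector field $v$, one has $\eta\wedge df=\pm\langle v,\nabla f\rangle\,\omega$ and $d\eta=\operatorname{div}(v)\,\omega$. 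By the description of $W$ and of the map $\rho^{-1}\circ\vartheta\colon W\to V$ in the proof of Lemma~\ref{L:Jacobian}, it therefore suffices to produce $v$ with $\langle v,\nabla f\rangle\equiv h\,\tfrac{\partial f}{\partial x_i}\pmod{(f)}$ and $\operatorname{div}(v)\in(f)+J(f)$; then $\eta=\iota_v\omega\in W$ and its image in $V$ is $\pm[h\,\tfrac{\partial f}{\partial x_i}\,\omega/f]$.

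Weighted homogeneity enters via Saito's theorem: an isolated hypersurface singularity is weighted homogeneous precisely when $f$ lies in its Jacobian ideal in the local ring. Hence $f\in J(f)$ in $\mathcal O_p$ at every $p\in\Sigma$ (and trivially at smooth points), and by the local nature of ideal membership, $f\in J(f)$ in $R$. Consequently $(f)+J(f)=J(f)$, the locus $V(J(f))=\Sigma$ is zero-dimensional, so $\tfrac{\partial f}{\partial x_1},\dots,\tfrac{\partial f}{\partial x_n}$ form a regular sequence in the Cohen--Macaulay ring $R$ and all their syzygies are Koszul. Fix $c$ with $\langle c,\nabla f\rangle=f$ and set $D=\sum_l c_l\,\tfrac{\partial}{\partial x_l}$, so $D(f)=f$. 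Starting from $v=h\,e_i$, with $\operatorname{div}(v)=\tfrac{\partial h}{\partial x_i}$, the admissible corrections are Koszul syzygies (which alter the divergence only by elements of $J(f)$) and multiples $b\,c$ (which alter the contraction only by $bf\equiv0$), and a short computation gives $\operatorname{div}(b\,c)=L(b):=D(b)+\operatorname{div}(c)\,b$. Thus the problem reduces to solving $\tfrac{\partial h}{\partial x_i}+L(b)\in J(f)$ for some $b$; as $\tfrac{\partial h}{\partial x_i}$ exhausts $R$ when $h$ does, what is needed is that $L$ act surjectively on the Milnor algebra $A:=R/J(f)=T(f)$.

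I would establish this surjectivity locally. From $D(f)=f$ one computes $D\bigl(\tfrac{\partial f}{\partial x_i}\bigr)\in J(f)$, so $D$ preserves $J(f)$ and descends to a derivation $\bar D$ of the finite-dimensional algebra $A$; a derivation annihilates idempotents, so $L=\bar D+(\text{multiplication by }\overline{\operatorname{div}(c)})$ respects the decomposition $A=\prod_{p\in\Sigma}A_p$ into local factors, and it suffices to see that each $L_p$ is bijective. At a weighted-homogeneous singularity the logarithmic fields annihilating $f$ are generated (by the regular-sequence property) by the Koszul fields $\tfrac{\partial f}{\partial x_i}\tfrac{\partial}{\partial x_j}-\tfrac{\partial f}{\partial x_j}\tfrac{\partial}{\partial x_i}$, which act as $0$ on $A_p$; hence $\bar D_p$ agrees with the rescaled Euler action of the normal form, whose eigenvalues are the weighted degrees $\delta\ge0$, while $\overline{\operatorname{div}(c)}$ is the positive constant $|w|/N$. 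Therefore $L_p$ has eigenvalues $(\delta+|w|)/N>0$ and is invertible. The main obstacle is exactly this last transfer from the explicit weighted-homogeneous normal form to the global operator $L$: it is the positivity of the weights that forces the eigenvalues to be nonzero, and it is here --- together with Saito's theorem and the Euler relation --- that characteristic $0$ is essential. Granting the invertibility of $L$, one solves for $b$, produces the required $v$, and concludes that $\varphi$ factors through $R/((f)+J(f))$.
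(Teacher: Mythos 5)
Your globalization of Saito's theorem is the fatal gap. Saito gives $f \in J(f)$ only in the \emph{analytic local ring} at each singular point $x \in \Sigma$ of $X$; and while ideal membership in $R$ can indeed be tested locally, it must be tested at \emph{every} point of $\mathbb A^n$, including the critical points of $f$ that do not lie on $X$. At such a point $p$, where $\nabla f(p) = 0$ but $f(p) \neq 0$, the element $f$ is a unit in $\mathcal O_p$ while $J(f)\mathcal O_p \subseteq \mathfrak m_p$, so $f \notin J(f)\mathcal O_p$. Such points are unavoidable: the dehomogenization $f$ has, generically, $(d-1)^n$ critical points, of which only the finitely many singular points of $X$ have critical value $0$, and nothing in the choice of the hyperplane $H$ controls the others. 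Hence $f \notin J(f)$ in $R$, so $(f)+J(f) \neq J(f)$, $V(J(f)) \neq \Sigma$ (it can even be positive-dimensional, e.g. when $f = g + 1$ with $g = x_1^2x_2^2$, so the partials need not form a regular sequence in $R$ and syzygies need not be Koszul), and --- decisively --- the global vector field $c$ with $\langle c, \nabla f\rangle = f$ on which your whole construction rests does not exist. Everything downstream (the derivation $D$, the operator $L$, its surjectivity on $T(f)$) therefore has no foundation. On top of this, you explicitly ``grant'' the invertibility of $L$, i.e.\ the transfer from the weighted homogeneous normal form to the global operator; note that divergence is not invariant under the analytic coordinate change $\psi$, so even identifying $\overline{\operatorname{div}(c)}$ with the constant $|w|/N$ in each local factor needs an argument (this particular point is repairable, since $b \mapsto D(b) + \operatorname{div}(c)\,b$ is the Lie-derivative action on $n$-forms and so is conjugated by the unit $\det J_\psi$, but that is moot given the first gap).

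The repair is to localize \emph{before} invoking weighted homogeneity, which is exactly what the paper does: following Dimca, the map $\Gr^1_P(\gamma)$ is identified with a restriction into $\bigoplus_{x \in \Sigma} \Gr^1_{P_x} H^n(\Omega^\bullet_{f,x})$, and the vanishing of the classes $[h/f\,dx_1\wedge\dots\wedge dx_n]$ for $h \in (f)+J(f)$ is checked separately in each local factor, where the biholomorphic normal form and the local Euler relation are available (Dimca's Example 3.6). Your instinct --- that the Euler relation supplied by Saito's theorem is what kills the Jacobian ideal in pole order one --- is the correct local idea, but it cannot be promoted to a single Euler-type vector field on all of $\mathbb A^n$; any correct argument must pass through the local pieces at $\Sigma$ as the paper's proof does.
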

\begin{proof}
 By the Lefschetz principle, assume that $K \subseteq \mathbb C$ and use analytic de Rham cohomology.  As in \cite{Dimca-Milnor}*{Section 3}, the map $\Gr^1_P(\gamma)$ can be described as the natural restriction
 $$\Gr^1_P(\gamma): \Gr_P^1 H^n(\mathbb P^n \setminus X) \to \bigoplus_{x \in \Sigma} \Gr_{P_x}^1 H^n(\Omega^\bullet_{f,x}),$$
 where $\Omega^\bullet_{f,x}$ denotes the localization of the holomorphic de Rham complex $\Omega_{\mathbb C^n, x}^\bullet$ with respect to $f$, and $P_x$ is the corresponding local pole-order filtration. In particular, for any $x \in \Sigma$ there is a natural surjection
 $$\varphi_x: \mathcal O_{\mathbb C^n, x} \to \Gr_{P_x}^1 H^n(\Omega^\bullet_{f,x}), \quad g \mapsto \left[ \frac{g}{f} \, dx_1 \wedge \dots \wedge dx_n \right].$$
 
 Suppose now that the singularity of $\overline X$ at $x$ is contact-equivalent to a weighted homogeneous singularity. Then there is a biholomorphic coordinate change $\psi$ sending $x$ to $(0, \dots, 0)$ such that $f' = \psi(f)$ is a weighted homogeneous polynomial. Moreover, $\psi$ induces an isomorphism of the local Tjurina algebras of $f$ at $x$ and $f'$ at $0$, respectively. 
 
 Take a polynomial $h \in (f) + J(f)$. Under the natural map 
 $$R  \to H^n(\Omega_{f,x}^\bullet) \xrightarrow{\simeq} H^n(\Omega_{f',0}^\bullet)$$
 induced by $\psi$, the class $[h/f dx_1 \wedge \dots \wedge dx_n]$ is sent to some $[h'/f' dx_1' \wedge \dots \wedge dx_n']$ with $h'$ lying in the analytic ideal $(f') + J(f') \subseteq \mathcal O_{\mathbb C^n, 0}$. However, the calculation \cite{Dimca-Milnor}*{Example 3.6} shows that $[h'/f' dx_1' \wedge \dots \wedge dx_n'] = 0$.
\end{proof}

Applying the same methods as in the proof of Theorem~\ref{T:milnor}, this yields:
\begin{corollary}\label{C:whom}
 Suppose that $\overline X$ has at most weighted homogeneous singularities. If $\overline X$ has defect, then
 $\tau \geq d - n + 1.$
\end{corollary}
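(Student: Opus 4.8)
The plan is to reprise the argument from the proof of Theorem~\ref{T:milnor}, substituting the sharpened factorization just established for the troublesome pole order $k = 1$. Since $\overline X$ has defect, Corollary~\ref{C:Graded} furnishes an integer $k \geq 0$ for which $\Gr^k_P(\gamma)$ fails to be surjective, and Lemma~\ref{L:Jacobian}~(3) rules out $k = 0$, so that $k \geq 1$. As before, I would feed this into the commutative square with surjective vertical maps relating the restriction $S_{kd-n-1} \to R$ to $\Gr^k_P(\gamma)$: non-surjectivity of $\Gr^k_P(\gamma)$ forces the composite $S_{kd-n-1} \to \Gr^k_P(H^n(\mathbb A^n \setminus X)/V)$ to be non-surjective.

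The key improvement lies in how the vertical map $\varphi$ factors. Under the weighted homogeneous hypothesis, the preceding lemma shows that for $k = 1$ the map $\varphi$ factors through the global Tjurina algebra $T(f) = R/((f)+J(f))$, rather than merely through $R/((f)+J(f)^3)$ as in Lemma~\ref{L:Jacobian}~(2). Combined with Lemma~\ref{L:Jacobian}~(1), which already gives this factorization for all $k \geq 2$, I would conclude that for every $k \geq 1$ the map $\varphi$ factors through $T(f)$. Hence the failure of surjectivity of $\Gr^k_P(\gamma)$ translates directly into the failure of surjectivity of the natural restriction map $S_{kd-n-1} \to T(f)$, with no intervening length estimates required.

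It then remains to invoke Poonen's trick \cite{Poonen-Bertini}*{Lemma~2.1(b)} exactly as in Theorem~\ref{T:milnor}: since $\dim \Sigma = 0$, the algebra $T(f)$ is finite-dimensional over $K$, and the image of $S_i$ in $T(f)$ strictly increases with $i$ until it equals $T(f)$, so the restriction map is surjective for all $i \geq \tau - 1$. Non-surjectivity at $i = kd-n-1$ therefore yields $kd - n - 1 \leq \tau - 2$, that is, $\tau \geq kd - n + 1$. Using $k \geq 1$ gives the claimed bound $\tau \geq d - n + 1$.

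I expect no serious obstacle beyond bookkeeping, since the entire difficulty has been absorbed into the preceding weighted homogeneous lemma, whose role is precisely to remove the $(n^2+n+1)$ blow-up of Theorem~\ref{T:milnor} that arose from the cruder $J(f)^3$ factorization and the length bounds on the successive quotients $J(f)^i/J(f)^{i+1}$. The only point requiring mild care is verifying that the factorization through $T(f)$ holds uniformly across all $k \geq 1$, so that the single clean estimate $\tau \geq kd - n + 1$ applies regardless of which graded piece of the pole-order filtration detects the defect.
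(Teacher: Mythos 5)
Your proposal is correct and matches the paper's intended argument exactly: the paper proves Corollary~\ref{C:whom} precisely by ``applying the same methods as in the proof of Theorem~\ref{T:milnor}'', with the preceding weighted homogeneous lemma replacing the $J(f)^3$ factorization at pole order $k=1$ so that $\varphi$ factors through $T(f)$ for all $k \geq 1$, after which Poonen's trick gives $\tau \geq kd - n + 1 \geq d - n + 1$. Your handling of the case split ($k=0$ excluded by Lemma~\ref{L:Jacobian}~(3), $k=1$ by the new lemma, $k \geq 2$ by Lemma~\ref{L:Jacobian}~(1)) is exactly what the paper's one-line proof leaves implicit.
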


A well-known application is the following:
\begin{corollary}[\cite{Dimca-Betti}*{Proposition 3.4}]
 Suppose that $\overline X$ has at most ordinary double points as singularities. If $\overline X$ has defect, then  $ \tau \geq \frac{dn}{2}-n+1.$
\end{corollary}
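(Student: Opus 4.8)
The plan is to determine exactly at which pole order the defect of a nodal hypersurface can occur, and then to feed that value into the refined estimate of Theorem~\ref{T:milnor}. Since an ordinary double point is weighted homogeneous, the localization machinery used in the proof of Corollary~\ref{C:whom} applies verbatim: following \cite{Dimca-Milnor}*{Section 3}, for every $k \geq 0$ the map $\Gr^k_P(\gamma)$ is identified with the natural restriction
$$\Gr^k_P H^n(\mathbb P^n \setminus \overline X) \to \bigoplus_{x \in \Sigma} \Gr^k_{P_x} H^n(\Omega^\bullet_{f,x}),$$
so that its surjectivity is governed entirely by the local graded pieces at the nodes. The first step is therefore to compute, for a single node, the local pole-order filtration on $H^n(\Omega^\bullet_{f,x})$.

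To do this I would pass to the local model $f = x_1^2 + \dots + x_n^2$, whose Milnor (and Tjurina) algebra is just $K$, so that $H^n(\Omega^\bullet_{f,x})$ is spanned by the classes of $\omega/f^p$ with $\omega = dx_1 \wedge \dots \wedge dx_n$. The decisive computation is a contraction against the radial vector field: setting $\eta_p := \sum_{i=1}^n (-1)^i \tfrac{x_i}{f^{p}}\, dx_1 \wedge \dots \wedge \widehat{dx_i} \wedge \dots \wedge dx_n$ and using $\sum_i x_i \partial_i f = 2f$, a direct calculation yields $d\eta_p = (2p - n)\,\omega/f^{p}$. Hence $\omega/f^{p}$ is exact whenever $2p \neq n$. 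This shows that every class $\omega/f^p$ with $p \neq n/2$ vanishes in cohomology, so $\Gr^k_{P_x} H^n(\Omega^\bullet_{f,x}) = 0$ unless $k = n/2$; in particular this forces $n$ to be even, and the single surviving class then sits at pole order exactly $k = n/2$ (it is nonzero because the local cohomology is one-dimensional, the node having Milnor number one).

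With this local picture the conclusion is immediate. If $\overline X$ has defect, then by Corollary~\ref{C:Graded} some $\Gr^k_P(\gamma)$ fails to be surjective; but by the local identification above its target vanishes unless $k = n/2$, so the only possible level of non-surjectivity is $k = n/2$. This requires $n$ to be even, and since $n \geq 3$ we then have $k = n/2 \geq 2$. Applying the refined bound of Theorem~\ref{T:milnor} at this value gives
$$\tau \geq kd - n + 1 = \frac{dn}{2} - n + 1,$$
as claimed. For $n$ odd the same computation shows that every local graded piece vanishes, whence $\gamma$ is surjective at every level and $\overline X$ has no defect, so there is nothing to prove.

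The main obstacle is the local computation of the second paragraph: one must check that the node's contribution to $H^n(\Omega^\bullet_{f,x})$ is genuinely concentrated in pole order $n/2$, rather than appearing at the naive level $k = 1$ suggested by the Tjurina-algebra factorization used for general weighted homogeneous singularities. The radial-contraction identity $d\eta_p = (2p - n)\,\omega/f^p$ accomplishes exactly this, killing all pole orders $p \neq n/2$ at once; the only remaining care is to confirm that the surviving class at $p = n/2$ does not vanish and that the local-to-global description of $\Gr^k_P(\gamma)$ borrowed from \cite{Dimca-Milnor} is valid for all $k$, not merely for $k = 1$.
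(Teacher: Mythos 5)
Your proposal is correct and follows essentially the same route as the paper: both localize $\Gr^k_P(\gamma)$ at the nodes following Dimca and show the local graded piece $\Gr^k_{P_x}H^n(\Omega^\bullet_{f,x})$ vanishes unless $k = n/2$, then conclude via the second part of Theorem~\ref{T:milnor} with $k = n/2 \geq 2$. Your explicit radial-contraction identity $d\eta_p = (2p-n)\,\omega/f^p$ is just a hands-on version of the paper's observation (via weighted homogeneity of degree $2$ with weights $(1,\dots,1)$) that the local piece is spanned by homogeneous forms of degree $2k-n$, so the two arguments coincide in substance.
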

\begin{proof}
 The polynomial $f' = x_1^2 + \dots + x_n^2$ is weighted homogeneous of degree $2$ with respect to the weights $(1, \dots, 1)$. Since the local cohomology piece $\Gr^k_{P_0} H^n(\Omega_{f',0}^\bullet)$ is spanned by homogeneous forms of degree $2k - n $, it vanishes for $k \neq \frac{n}{2}$. In particular, $\overline X$ has no defect if $n$ is odd, as the map $\Gr^k_P(\gamma)$ is always surjective. For even $n$, defect implies that $\Gr^k_P(\gamma)$ is surjective for $k \neq \frac{n}{2}$ and not surjective for $ k = \frac{n}{2}$. This concludes the proof by the second part of Theorem~\ref{T:milnor}.
\end{proof}
\begin{remarks} Let $\overline X \subseteq \mathbb P^n$ be a nodal hypersurface.
\begin{itemize}
 \item  One can actually show that if $\overline X$ has defect and $\dim \overline X = 3$, then $\tau \geq (d-1)^2$, see \cite{Cheltsov} or \cite{Kloosterman-MaximalFamilies}*{Theorem 4.1}. The latter proof carries over to higher dimensions. 
 \item For even $n$, it is conjectured in \cite{Kloosterman-MaximalFamilies} that $\tau \geq (d-1)^{n/2}$.
\end{itemize}
\end{remarks}

\subsection{Proof of Theorem~\ref{T:Main0}}

\begin{proof}
 The statements for algebraic de Rham and Kähler-de Rham cohomology follow from Theorem~\ref{T:milnor} and Corollary~\ref{C:whom}. Embedding $K$ into $\mathbb C$ using the Lefschetz principle, defect in algebraic de Rham is equivalent to defect in singular cohomology by Theorem~\ref{T:singcomp}. The étale version is a consequence of Artin's comparison theorem between étale and singular cohomology \cite{Artin-Etale}*{Theorem~5.2}.
\end{proof}

\subsection{Positive characteristic}\label{SS:charp}

If $K$ is a field of characteristic $p > 0$, both algebraic and Kähler-de Rham cohomology behave pathologically. For example, affine space has infinite Betti numbers. To remedy this, one needs a different kind of cohomology theory. Of course $\ell$-adic étale cohomology, where $\ell \neq p$ is a prime, is a reasonable choice, but it is hard to describe explicitly.

A different possibility is to choose rigid cohomology, which is a $p$-adic cohomology theory built in analogy to algebraic de Rham cohomology (see e.g.\ \cite{Berthelot-Finitude}, \cite{LeStum}). For hypersurface complements in affine or projective space, there is a similar description as in Lemma~\ref{L:Explicit}, replacing polynomials by overconvergent power series. However, the rigid cohomology of singular varieties is a rather mysterious object. To our knowledge, it is not even known whether $H^n(X) = 0$ holds for a singular affine hypersurface $X \subseteq \mathbb A^n$.

The field $K$ admits a ring of Witt vectors $W(K)$, denote its field of quotients by $Q(K)$. Let $F \in W(K)[x_0,\dots,x_n]_d$ be a homogeneous polynomial of degree $d$ with coefficients in the ring $W(K)$. Then $F$ defines a $W(K)$-scheme $\mathcal X$. Its generic fiber is the hypersurface $\mathcal X_\eta := \{F = 0\} \subseteq \mathbb P^n_{Q(K)}$. The special fiber $\mathcal X_s$ is a hypersurface in $\mathbb P^n_K$ defined by reducing $F$ modulo $p$. Both the rigid cohomology of $\mathcal X_s$ and the algebraic de Rham cohomology of $\mathcal X_\eta$ take values in $Q(K)$, and there is a natural cospecialization map relating them. This map is an isomorphism when $\mathcal X$ is smooth. For singular $\mathcal X$, this is no longer true: A simple example is given by $F = x_0^2 + x_1^2 + x_2^2 + x_3^2 + p x_4^2 \in \mathbb Z_p[x_0,\dots,x_4]$. The corresponding generic fiber $\mathcal X_\eta$ is a smooth hypersurface in $\mathbb P^4_{\mathbb Q_p}$ and hence $h^4_\dR(\mathcal X_\eta) = 1$. On the other hand, $\mathcal X_s \subseteq \mathbb P^4_
{\mathbb F_q}$ is not factorial, so $h^4_\rig(\mathcal X_s) > 1$ by Theorem~\ref{T:factorial}. If instead we choose $F = x_0^2 + x_1^2 + x_2^2 + x_3^2$, the special fiber does not change, but the generic fiber has defect as well.

This motivates the following question:

\begin{question}
Let $X  \subseteq \mathbb P^n_K$ be a hypersurface with defect in rigid cohomology. Does $X$ admit a lift $\mathcal X \subseteq \mathbb P^n_{W(K)}$ such that the generic fiber $\mathcal X_\eta \subseteq \mathbb P^n_{Q(K)}$ has defect in algebraic de Rham cohomology? 
\end{question}

If this question had an affirmative answer, then we could use the results of Section~\ref{S:char0}:
\begin{corollary}
 Let $X \subseteq \mathbb P^n_K$ be a hypersurface of degree $d$ with global Tjurina number $\tau$ admitting a lift with defect. Then
 $$ \tau \geq \frac{d-n+1}{n^2+n+1}.$$
\end{corollary}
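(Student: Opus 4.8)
The plan is to reduce the statement to Theorem~\ref{T:Main0} applied to the generic fiber, the only new ingredient being the semicontinuity of the Tjurina number under specialization. By hypothesis $X$ admits a lift $\mathcal X \subseteq \mathbb P^n_{W(K)}$, given by a homogeneous $F \in W(K)[x_0,\dots,x_n]_d$ reducing to a defining equation of $X$ modulo $p$, whose generic fiber $\mathcal X_\eta = \{F = 0\} \subseteq \mathbb P^n_{Q(K)}$ has defect in algebraic de Rham cohomology. Since $Q(K)$ has characteristic zero, Theorem~\ref{T:Main0} applies verbatim to $\mathcal X_\eta$, which has degree $d$, and gives
$$\tau(\mathcal X_\eta) \geq \frac{d-n+1}{n^2+n+1}.$$
It therefore suffices to prove the single inequality $\tau(X) = \tau(\mathcal X_s) \geq \tau(\mathcal X_\eta)$, i.e.\ that the global Tjurina number cannot drop when passing from the generic to the special fiber.

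To establish this I would work with the relative Tjurina algebra over $S := \Spec W(K)$. After choosing a chart $\mathbb A^n = \mathbb P^n \setminus H$ with $H$ avoiding the singular loci of both fibers (possibly after a finite extension of the residue field, which affects neither defect nor Tjurina numbers), let $f \in W(K)[x_1,\dots,x_n]$ be the dehomogenization of $F$ and set $T := W(K)[x_1,\dots,x_n]/((f) + J(f))$. Since forming partial derivatives commutes with reduction modulo $p$, base change identifies $T \tensor_{W(K)} Q(K)$ and $T \tensor_{W(K)} K$ with the Tjurina algebras of $\mathcal X_\eta$ and of $X$, of dimensions $\tau(\mathcal X_\eta)$ and $\tau(X)$ respectively. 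The crucial point is that $T$ is a \emph{finite} $W(K)$-module: $\Spec T$ is the relative singular locus, a closed subscheme of the projective $W(K)$-scheme $\mathcal X$ and hence proper over $S$; its special fiber is finite because $X$ has isolated singularities, and properness together with the theorem on fibre dimension then forces the generic fiber to be finite as well, so that $\Spec T \to S$ is quasi-finite and proper, hence finite.

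Once finiteness is in hand the conclusion is immediate: $W(K)$ is a discrete valuation ring, so $T \cong W(K)^{\oplus r} \oplus T_{\mathrm{tors}}$ with $T_{\mathrm{tors}}$ of finite length, whence $\tau(\mathcal X_\eta) = \dim_{Q(K)} T \tensor_{W(K)} Q(K) = r$ while $\tau(X) = \dim_K T \tensor_{W(K)} K = r + \length_K(T_{\mathrm{tors}} \tensor_{W(K)} K) \geq r$. Combining this with the displayed bound for $\tau(\mathcal X_\eta)$ finishes the proof. I expect the main obstacle to be precisely this middle step: verifying that specialization can only increase the Tjurina number, which amounts to the finiteness of $\Spec T$ over $S$ and a careful check that base change really identifies the two fibres of $T$ with the respective Tjurina algebras. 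The remaining bookkeeping is mild — one selects a chart containing all singular points, and notes that if $\tau(X)$ is infinite the statement is vacuous, while if it is finite the same semicontinuity guarantees $\tau(\mathcal X_\eta) \leq \tau(X) < \infty$, so that $\mathcal X_\eta$ automatically has isolated singularities and $\tau(\mathcal X_\eta)$ is the honest global Tjurina number to which Theorem~\ref{T:Main0} applies.
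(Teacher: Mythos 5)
Your proposal is correct and takes essentially the same route as the paper: the paper's entire proof is to apply Theorem~\ref{T:milnor} to the generic fiber and to note that the Nakayama lemma implies the Tjurina number cannot decrease under reduction mod $p$, which is precisely your semicontinuity step, carried out via the structure theorem for the finite $W(K)$-module $T$ rather than Nakayama. Your write-up merely supplies details the paper leaves implicit (finiteness of $\Spec T$ over $\Spec W(K)$ via properness and the fiber-dimension argument, the choice of chart, and the vacuous case $\tau = \infty$).
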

\begin{proof}
 The Nakayama lemma implies that the Tjurina number cannot decrease after reduction mod $p$. Apply Theorem~\ref{T:milnor}.
\end{proof}

However, this question seems to be very delicate. By \cite{Vakil-Murphy}*{Theorem~1.1}, there are surfaces $S \subseteq \mathbb P^4$ that do not lift to characteristic zero. Such surfaces cannot be complete intersections, so no hypersurface $X$ containing $S$ can be factorial. In particular, if such an $X$ is defined over $\overline{\mathbb F_p}$, then $X$ will have defect by Theorem~\ref{T:factorial}. On the other hand, it is well possible that every lift of $X$ is factorial, as we cannot lift $S$.

\section{Resolution of singularities}\label{S:res}

\subsection{Cohomological preliminaries}

In this section, we relate defect of hypersurfaces to the number of singularities following the ideas presented in \cite{PRS}. Let $K$ be an algebraically closed field of characteristic $p \neq 2$. Denote by $H^\bullet$ one of these theories:
\begin{itemize}
 \item étale cohomology with coefficients in $\mathbb Q_\ell$, where $\ell \neq p$ is a prime,
 \item algebraic de Rham cohomology with coefficients in $K$ (if $p = 0$),
 \item rigid cohomology with coefficients in the field of quotients of the ring of Witt vectors of $K$ (if $p > 0$).
\end{itemize}
All these theories feature the cohomological facts \ref{F:AnPn}-\ref{F:affine} and the Lefschetz hyperplane theorem \ref{L:Lefschetz} with the small exception that it is not known whether $H_\rig^i(Z) = 0$ for singular affine varieties $Z$ and $i > \dim Z$. However, this will only be used for affine hypersurfaces with weighted homogeneous singularities, where the required statements follow from \cite{David-Thesis}*{§3.2}. The advantage is now that we can freely the cohomological proofs of Section~\ref{S:char0}.

Moreover, we will need two more cohomological tools.

\begin{lemma}[Long exact sequence of a proper birational morphism]\label{L:properbirational}
 Let $X$ be a complete variety over $K$. Further let $\pi: Y \to X$ be a proper birational morphism such that its restriction $\pi|_{Y \setminus E}: Y\setminus E \to X \setminus \Sigma$ is an isomorphism for certain closed subschemes $E \subseteq Y$ and $\Sigma \subseteq X$. Then there is a long exact sequence
 $$ \dots \to H^i(X) \to H^i(Y) \oplus H^i(\Sigma) \to H^i(E) \to H^{i+1}(X) \to \dots$$
\end{lemma}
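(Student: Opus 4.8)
The plan is to realize the claimed sequence as the Mayer--Vietoris sequence of the (Cartesian-type) square determined by $\pi$, obtained by gluing two compact-support Gysin sequences along the open locus where $\pi$ is an isomorphism. The first observation is that all four varieties are complete: $X$ is complete by hypothesis; $Y$ is proper over $X$, hence complete, because $\pi$ is proper; and $\Sigma \subseteq X$, $E \subseteq Y$ are closed, hence complete as well. Therefore on each of them ordinary cohomology agrees with compact-support cohomology (a fact available in all three theories considered in this section), so every $H^i$ in the statement may be replaced by the corresponding $H^i_c$.

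Next I would set $U := X \setminus \Sigma$ and $V := Y \setminus E$. Since $\pi$ restricts to an isomorphism $V \xrightarrow{\sim} U$, it induces isomorphisms $H^i_c(U) \cong H^i_c(V)$ for all $i$. Moreover $\pi$ is surjective, being proper and dominant onto the complete variety $X$, so $\pi(E) = \Sigma$ and $\pi$ restricts to a morphism $\pi_E : E \to \Sigma$; altogether $\pi$ is a morphism of the stratified pairs $(Y, E, V) \to (X, \Sigma, U)$. I then write down the two compact-support Gysin sequences of Fact~\ref{F:GysinCompact},
$$\cdots \to H^i(X) \xrightarrow{\iota_\Sigma^*} H^i(\Sigma) \to H^{i+1}_c(U) \to H^{i+1}(X) \to \cdots$$
$$\cdots \to H^i(Y) \xrightarrow{\iota_E^*} H^i(E) \to H^{i+1}_c(V) \to H^{i+1}(Y) \to \cdots,$$
and arrange them into a commutative ladder whose three columns are $\pi^*$, $\pi_E^*$, and the isomorphism $H^{\bullet}_c(U) \xrightarrow{\sim} H^{\bullet}_c(V)$. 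The commutativity of this ladder is precisely the functoriality of the compact-support Gysin sequence along $\pi$.

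Finally, since the column identifying the open parts consists of isomorphisms, the Barratt--Whitehead gluing lemma (pure homological algebra) splices the two rows into a single long exact sequence
$$\cdots \to H^i(X) \xrightarrow{(\pi^*,\, \iota_\Sigma^*)} H^i(Y) \oplus H^i(\Sigma) \xrightarrow{\iota_E^* - \pi_E^*} H^i(E) \xrightarrow{\delta} H^{i+1}(X) \to \cdots,$$
whose connecting map $\delta$ is assembled from the two boundary maps and the inverse of the open-part isomorphism. Under the identifications above this is exactly the asserted sequence, the middle map sending $(y,s) \mapsto \iota_E^*(y) - \pi_E^*(s)$.

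I expect the main obstacle to be the verification of the commutative ladder, i.e.\ the naturality of the compact-support Gysin sequence with respect to the proper morphism $\pi$ — and especially the commutativity of the square involving the connecting homomorphisms $H^i(\Sigma) \to H^{i+1}_c(U)$ and $H^i(E) \to H^{i+1}_c(V)$. For étale and algebraic de Rham cohomology this functoriality is standard, but for rigid cohomology the existence and naturality of proper pushforward on compactly supported cohomology is the delicate input; it is available in Berthelot's formalism, and once granted, the homological-algebra gluing is entirely formal.
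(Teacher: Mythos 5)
Your proposal is correct and matches the paper's proof essentially step for step: the paper likewise passes to compact-support cohomology, forms the commutative ladder of the two compact-support Gysin sequences for $(X,\Sigma)$ and $(Y,E)$ using contravariance of $H^\bullet_c$ under the proper map $\pi$ (with the outer columns the isomorphisms $H^i_c(X\setminus\Sigma)\cong H^i_c(Y\setminus E)$), splices them by the same diagram chase you invoke as Barratt--Whitehead, and finally drops the compact supports since $X,Y,\Sigma,E$ are all complete. Your closing remarks on the naturality of the ladder (especially in rigid cohomology) flag exactly the input the paper treats as given.
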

\begin{proof}
See also \cite{Hartshorne-dR}*{Theorem~II.4.4} for algebraic de Rham cohomology and \cite{Kloosterman-Average}*{Proposition~2.3} for étale cohomology. Since cohomology with compact support is contravariant with respect to proper morphisms, the resolution $\pi$ induces a commutative ladder
 \begin{align*}
  \begin{CD}
   \dots @>>> H^i_c(X \setminus \Sigma) @>>> H^i_c(X) @>\beta>> H^i_c(\Sigma) @>>> H^{i+1}_c(X \setminus \Sigma) @>>> \dots\\
   @. @| @V\alpha VV @V\gamma VV @| @.\\
   \dots @>>> H^i_c(Y \setminus E) @>>> H^i_c(Y) @>\delta>> H^i_c(E) @>>> H^{i+1}_c(Y \setminus E) @>>> \dots
  \end{CD}
 \end{align*}
By diagram chasing, this yields a long exact sequence 
$$ \dots \to H^i_c(X) \xrightarrow{(\alpha, \beta)} H^i_c(Y) \oplus H^i_c(\Sigma) \xrightarrow{\gamma - \delta} H^i_c(E) \to H^{i+1}_c(X) \to \dots$$
Since $X, \Sigma, Y, E$ are all complete, we can omit the compact support.
\end{proof}

\begin{lemma}[Mayer-Vietoris sequence]\label{L:MVss}
 Let $X_1, \dots, X_r$ be projective varieties over $K$ and let $X := X_1 \cup \dots \cup X_r$. Suppose that the triple intersections $X_j \cap X_k \cap X_\ell$ are empty for pairwise distinct $j, k, \ell$. Then there is a long exact sequence
 $$ \dots \to H^i(X) \to \bigoplus_{j=1}^r H^i(X_j) \to \bigoplus_{1\leq j <k \leq r} H^i(X_j \cap X_k) \to H^{i+1}(X) \to \dots.$$
\end{lemma}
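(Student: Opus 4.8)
The plan is to reduce the whole statement to the two-term Mayer--Vietoris sequence for a union of two closed subvarieties, and then to induct on $r$; the triple-intersection hypothesis is exactly what makes this induction close up into the stated three-periodic sequence.

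First I would treat the case $r=2$: for a closed cover $X = A \cup B$ of complete $K$-varieties with $C := A \cap B$, I claim there is a long exact sequence $\dots \to H^i(X) \to H^i(A)\oplus H^i(B) \to H^i(C) \to H^{i+1}(X)\to\dots$. To produce it, observe that $B \setminus C = X \setminus A$ as open subschemes of $X$, so the closed immersion $B \hookrightarrow X$ (which restricts to $C \hookrightarrow A$ and to the identity on $B \setminus C = X \setminus A$) is proper and hence, by contravariance of $H^\bullet_c$ for proper morphisms, induces a morphism of compact-support excision sequences (Fact~\ref{F:GysinCompact})
\begin{CD}
H^i_c(X) @>>> H^i_c(A) @>>> H^{i+1}_c(X\setminus A)\\
@VVV @VVV @|\\
H^i_c(B) @>>> H^i_c(C) @>>> H^{i+1}_c(B\setminus C)
\end{CD}
in which the right-hand vertical map is the identity. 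The same Barratt--Whitehead diagram chase used in the proof of Lemma~\ref{L:properbirational} then splices the two rows into a single long exact sequence $\dots \to H^i_c(X) \to H^i_c(A)\oplus H^i_c(B) \to H^i_c(C) \to H^{i+1}_c(X)\to\dots$, the first map being $u \mapsto (u|_A, u|_B)$ and the second the difference of restrictions to $C$. Since $X, A, B, C$ are all complete, I may drop the compact supports.

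Next I would induct on $r$. Writing $Y := X_1 \cup \dots \cup X_{r-1}$, I apply the case just proved to the cover $X = Y \cup X_r$. The essential input from the hypothesis is that $Y \cap X_r = \bigcup_{j<r}(X_j \cap X_r)$ is a \emph{disjoint} union: for $j \ne k$ one has $(X_j\cap X_r)\cap(X_k\cap X_r) = X_j \cap X_k \cap X_r = \emptyset$. Hence $H^i(Y\cap X_r) = \bigoplus_{j<r} H^i(X_j\cap X_r)$, while by the inductive hypothesis $H^\bullet(Y)$ sits in the long exact sequence relating it to $\bigoplus_{j<r} H^\bullet(X_j)$ and $\bigoplus_{j<k<r} H^\bullet(X_j\cap X_k)$.

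It remains to merge these two long exact sequences -- the two-term one for $X = Y \cup X_r$ and the inductive one resolving $H^\bullet(Y)$ -- into the single three-periodic sequence of the statement, and I expect this merging to be the main obstacle. Conceptually it is the assertion that the Mayer--Vietoris spectral sequence of the closed cover,
$$E_1^{p,q} = \bigoplus_{j_0<\dots<j_p} H^q(X_{j_0}\cap\dots\cap X_{j_p}) \Rightarrow H^{p+q}(X),$$
degenerates: here $E_1^{p,q}=0$ for $p\ge 2$ by the vanishing of all triple intersections, leaving only the two columns $p=0,1$, and a two-column spectral sequence is precisely a long exact sequence of the claimed shape, with the differential $\bigoplus_j H^i(X_j) \to \bigoplus_{j<k} H^i(X_j\cap X_k)$ being the alternating-sum Čech restriction map. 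Carrying this out by hand amounts to a double diagram chase identifying that differential and checking exactness, the completeness of every intersection (from projectivity) being what permits the uniform use of the compact-support sequences across the étale, de Rham and rigid settings.
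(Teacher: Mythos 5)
Your $r=2$ case is sound and stays within the paper's toolkit: the commutative ladder of compact-support sequences for the proper morphism of closed pairs $(C \subseteq B) \to (A \subseteq X)$, with the identity on $B \setminus C = X \setminus A$, followed by the Barratt--Whitehead splice, is exactly the technique of Lemma~\ref{L:properbirational}, and completeness lets you drop the compact supports. The genuine gap is the merging step of your induction. Possessing the two long exact sequences --- the two-set sequence for $X = Y \cup X_r$ and the inductive sequence resolving $H^\bullet(Y)$ --- is not sufficient to splice them into the stated three-periodic sequence: there is no morphism \emph{between} these two sequences along which to chase, and the required compatibility is between the connecting homomorphism of the first sequence and the restriction maps of the second, which is chain-level information the abstract sequences do not carry. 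Concretely, even granting $H^i(Y \cap X_r) = \bigoplus_{j<r} H^i(X_j \cap X_r)$ by disjointness, exactness of the proposed sequence at $\bigoplus_{j \leq r} H^i(X_j)$ and at $\bigoplus_{j<k\leq r} H^i(X_j \cap X_k)$ cannot be verified from (A) and (B) alone; phrased in derived-category terms, the octahedron produces a triangle relating the relevant cone to $\bigoplus_{j<r} R\Gamma(X_j \cap X_r)$ and $\bigoplus_{j<k<r} R\Gamma(X_j \cap X_k)$, but one still has to show that triangle splits and that the resulting maps are the natural restrictions --- none of which is a formal diagram chase.

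Your fallback, the Mayer--Vietoris spectral sequence of the closed cover, would indeed finish the argument (a two-column $E_1$ page does yield a long exact sequence), but in the axiomatic setting of this paper its \emph{existence} in each of the three theories is precisely what must be proved, and proving it is the paper's actual proof. The paper works directly at the chain level, constructing in each theory a two-step \v{C}ech resolution: the short exact sequence $0 \to \Omega^\bullet_Y/_X \to \bigoplus_{j} \Omega^\bullet_Y/_{X_j} \to \bigoplus_{j<k} \Omega^\bullet_Y/_{X_j \cap X_k} \to 0$ of formally completed de Rham complexes (via \cite{Hartshorne-dR}*{Proposition~II.4.1}), the analogous sequence of tube complexes $\Omega^\bullet_{]X[_{\mathscr P}}$ for rigid cohomology, and $0 \to \mathbb Q_\ell \to \bigoplus_j {\iota_j}_*\mathbb Q_\ell \to \bigoplus_{j<k} {\iota_{j,k}}_*\mathbb Q_\ell \to 0$ for étale cohomology. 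Exactness on the right of these sequences is exactly where the empty-triple-intersection hypothesis enters (stalkwise, the \v{C}ech complex would otherwise continue with triple-intersection terms), and taking (hyper)cohomology gives the lemma in one step, with no induction and no $r=2$ reduction. So you should either carry out these chain-level constructions explicitly --- at which point your induction becomes superfluous --- or acknowledge that the proposal as written reduces the lemma to an unestablished spectral sequence.
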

\begin{proof}
 In the algebraic de Rham case, let $X \hookrightarrow Y$ be a closed embedding into a smooth projective variety $Y$. Then there is a short exact sequence of formally completed de Rham complexes
 $$ 0 \to \Omega^\bullet_Y/_X \to \bigoplus_{j=1}^r \Omega^\bullet_Y/_{X_j} \to \bigoplus_{1 \leq j < k \leq r} \Omega^\bullet_Y/_{X_j \cap X_k} \to 0,$$
 compare \cite{Hartshorne-dR}*{Proposition~II.4.1}. It remains to apply hypercohomology. The proof for rigid cohomology is analogous: Embed $X$ into the closed fiber of a smooth formal scheme $\mathscr P$ and use the short exact sequence
 $$ 0 \to \Omega^\bullet_{]X[_{\mathscr P}} \to \bigoplus_{j=1}^r \Omega^\bullet_{]X_j[_{\mathscr P}} \to \bigoplus_{1 \leq j < k \leq r} \Omega^\bullet_{]X_j \cap X_k[_{\mathscr P}} \to 0.$$
 For étale cohomology, let $\iota_j$ resp. $\iota_{j,k}$ denote the inclusion of $X_j$ resp. $X_j \cap X_k$ into $X$ and take the long exact cohomology sequence of
 \begin{align*}
  0 \to \mathbb Q_\ell \to \bigoplus_{j=1}^r {\iota_j}_*\mathbb Q_\ell \to \bigoplus_{1 \leq j < k \leq r} {\iota_{j,k}}_* \mathbb Q_\ell \to 0. & \qedhere
 \end{align*}
\end{proof}

\subsection{Hypersurfaces with ordinary multiple points and $A_k$ singularities}

For a positive integer $n \geq 3$, let $X \subseteq \mathbb P^n_K$ be an irreducible hypersurface of degree $d$ with isolated singularities. Again, we define the \textit{defect} of $X$ as $$\delta(X) := h^n(X) - h^n(\mathbb P^n).$$ 

Suppose further that the singular points belong to the following classes:
\begin{itemize}
 \item \textit{Ordinary multiple points.} A point $x$ is an \textit{ordinary multiple point of multiplicity $m$} if the projectivized tangent cone at $x$ is the cone over a smooth degree $m$ hypersurface in $\mathbb P^{n-1}$ for some $m \geq 2$.
 \item \textit{$A_k$ singularities.} These are points whose completed local ring is isomorphic to $$K[[x_1,\dots,x_n]]/(x_1^{k+1}+x_2^2 + \dots +x_n^2)$$
 for some $k \geq 1$.
\end{itemize}
Note that an ordinary double point is an $A_1$ singularity, and this is the only common member of both families.

Let $\Sigma_O$ be the set of ordinary multiple points in $X$ of multiplicity $\geq 3$, and denote by $m_x$ the multiplicity of a point $x \in \Sigma_O$. Similarly, define $\Sigma_A$ to be the union of all $A_k$ points in $X$ for $k \geq 1$, and for an $A_k$ singularity $x \in \Sigma_A$ let $r_x := \lceil k/2 \rceil$.

The advantage of restricting to these two classes of singularities is the very explicit nature of a resolution of singularities:

\begin{proposition}\label{P:ResSing}
Let $X$ be as above. Then there is an embedded resolution of singularities $\pi: (Y \subseteq P) \to (X \subseteq \mathbb P^n)$ such that $P$ is a smooth $n$-fold obtained from $\mathbb P^n$ by a finite sequence of blowups in points. More precisely:
\begin{enumerate}[\normalfont (1)]
 \item $P$ is obtained by $\mathbb P^n$ as a sequence of
 $$s := \#\Sigma_O + \sum_{x \in \Sigma_A} r_x$$
 blowups in points.
 \item As a divisor on $P$, the strict transform $Y$ of $X$ is linearly equivalent to
 $$ d H - \sum_{x \in \Sigma_O} m_x \mathcal D_x - \sum_{x \in \Sigma_A} \sum_{i=1}^{r_x} 2i \cdot \mathcal E_{x,i} $$
 where
 \begin{itemize}
  \item $H$ is the pullback of a hyperplane,
  \item $\mathcal D_x \cong \mathbb P^{n-1}$ and $D_x := Y \cap \mathcal D_x$ is a smooth degree $m_x$ hypersurface in $\mathbb P^{n-1}$,
  \item $\mathcal E_{x,i}$ is obtained from $\mathbb P^{n-1}$ by $r_x-i$ blowups in points and $E_{x,i} := Y \cap \mathcal E_{x,i}$ is isomorphic to the blowup at the vertex of the cone over a smooth quadric in $\mathbb P^{n-2}$ for $i= 1, \dots, r_x-1$.
  \item $\mathcal E_{x,r_x} \cong \mathbb P^{n-1}$ and $E_{x, r_x} := Y \cap \mathcal E_{x, r_x}$ is isomorphic to a smooth quadric in $\mathbb P^{n-1}$ if $k$ is odd,
  \item $\mathcal E_{x,r_x} \cong \mathbb P^{n-1}$ and $E_{x, r_x} := Y \cap \mathcal E_{x, r_x}$ is isomorphic to the cone over a smooth quadric in $\mathbb P^{n-2}$ if $k$ is even.
  \item $E_{x,i} \cap E_{x,j} = \emptyset$ unless $|i - j| \leq 1$ and $E_{x,i} \cap E_{x,i+1}$ is isomorphic to a smooth quadric in $\mathbb P^{n-2}$ for $i = 1, \dots, r_x-1$.
  \item $E_{x,i} \cap E_{x,j} \cap E_{x,k} = \emptyset$ for pairwise distinct $i,j,k$.
 \end{itemize}
\end{enumerate}
\end{proposition}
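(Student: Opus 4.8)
The plan is to resolve each point of $\Sigma$ separately and then glue. Since the singularities are isolated, the blow-up centres used to resolve one point of $\Sigma$ lie in an arbitrarily small neighbourhood of that point, so the resolutions at distinct points are performed over disjoint opens and do not interact; it therefore suffices to analyse a single $x\in\Sigma$ and to take the disjoint union of the resulting data. For a point blow-up $\pi$ with centre $P$ I will use the two standard formulas $[\widetilde X]=\pi^*[X]-(\operatorname{mult}_P X)\,\mathcal E$ for the strict transform of $X$ and $\pi^*\mathcal D=\widetilde{\mathcal D}+(\operatorname{mult}_P\mathcal D)\,\mathcal E$ for the pullback of an auxiliary divisor $\mathcal D$, where $\mathcal E$ is the exceptional divisor. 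The term $dH$ in the asserted class is simply the total transform of $X$, so the entire content of the proposition is the local bookkeeping of multiplicities and exceptional divisors at the points of $\Sigma_O$ and $\Sigma_A$.

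At an ordinary multiple point $x$ of multiplicity $m_x$ I perform a single blow-up. In affine coordinates centred at $x$ the equation reads $f_{m_x}+f_{m_x+1}+\cdots$, and by hypothesis $\{f_{m_x}=0\}\subseteq\mathbb P^{n-1}$ is a smooth hypersurface of degree $m_x$. Working in the chart $x_1=t$, $x_j=tu_j$ one factors out $t^{m_x}$ and is left with $\{f_{m_x}(1,u)+t(\cdots)=0\}$; smoothness of the projectivized tangent cone guarantees that some partial derivative is nonvanishing along the exceptional locus, so one blow-up already makes the strict transform smooth near $x$. The exceptional divisor is $\mathcal D_x\cong\mathbb P^{n-1}$, the dropped multiplicity is $m_x$, contributing the summand $-m_x\mathcal D_x$, and $D_x=Y\cap\mathcal D_x=\{f_{m_x}=0\}$ is the claimed smooth degree-$m_x$ hypersurface.

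The heart of the argument is the $A_k$ case, which I treat by induction on $k$ using the completed (equivalently, étale-) local model $x_1^{k+1}+x_2^2+\cdots+x_n^2$; here the hypothesis $\operatorname{char}K\neq 2$ is what makes all the quadrics $\sum x_j^2$ smooth. Blowing up the origin and passing to the chart $x_1=t$, $x_j=tu_j$ turns the equation into $t^2\bigl(t^{k-1}+u_2^2+\cdots+u_n^2\bigr)$, so the strict transform carries an $A_{k-2}$ singularity at the origin of this chart and, as one checks in the remaining charts, no other singular point; the dropped multiplicity is $2$. Iterating $r_x=\lceil k/2\rceil$ such blow-ups reduces $A_k$ to $A_1$ for odd $k$ and to $A_2$ for even $k$, whereupon the last blow-up (already counted) yields a smooth strict transform. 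For $i<r_x$ the divisor created at stage $i$ is a $\mathbb P^{n-1}$ meeting $Y$ in the projectivized tangent cone $\{u_2^2+\cdots+u_n^2=0\}$, the cone over a smooth quadric in $\mathbb P^{n-2}$ with vertex in the distinguished $t$-direction, and the next blow-up is centred at precisely this vertex; thus $E_{x,i}$ becomes the blow-up of that cone at its vertex. At the final stage the tangent-cone computation gives $E_{x,r_x}$ a smooth quadric (odd $k$, where $A_1$ is blown up) or an unmodified cone over a quadric (even $k$, where $A_2$ is blown up). The intersection $E_{x,i}\cap E_{x,i+1}$ is the exceptional $\mathbb P^{n-2}$ of the vertex blow-up cut with $Y$, namely the base quadric, and all intersections with $|i-j|\geq 2$ as well as all triple intersections vanish because each newly created singular point lies only on the most recently produced exceptional divisor.

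Finally the coefficients: at stage $i$ the centre is a smooth point (the cone vertex) of the previous exceptional $\mathcal E_{x,i-1}$, so $\pi_i^*\mathcal E_{x,i-1}=\widetilde{\mathcal E}_{x,i-1}+\mathcal E_{x,i}$; combining this with the multiplicity-$2$ drop $[X_i]=\pi_i^*[X_{i-1}]-2\,\mathcal E_{x,i}$ gives the recursion $c_i=c_{i-1}+2$ with $c_1=2$, so $\mathcal E_{x,i}$ appears in $[Y]$ with coefficient $2i$. Assembling the local data over all of $\Sigma$ yields the stated divisor class and the blow-up count $s=\#\Sigma_O+\sum_{x\in\Sigma_A}r_x$. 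The main obstacle is exactly this $A_k$ bookkeeping: one must verify through the explicit chart computations that the singularity type drops by precisely two at each stage with no stray singular points in the auxiliary charts, that each ambient $\mathcal E_{x,i}$ undergoes exactly the claimed number of point-blow-ups, and that the successive exceptional divisors meet $Y$ and one another in exactly the asserted quadric configuration, since it is this birational data that the Mayer--Vietoris sequence (Lemma~\ref{L:MVss}) and the proper-birational sequence (Lemma~\ref{L:properbirational}) will later consume.
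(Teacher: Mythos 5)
Your write-up is a genuinely different route from the paper's proof, which consists entirely of citations (to \cite{PRS} for ordinary multiple points and to \cite{DaisRoczen}, \cite{Schepers} for the $A_k$ case); a self-contained chart computation is more informative, and most of it checks out: the reduction to disjoint local analyses, the single blow-up at an ordinary multiple point with $D_x$ the smooth degree-$m_x$ tangent cone, the drop $A_k \to A_{k-2}$ with multiplicity $2$ in the chart $x_1=t$, $x_j=tu_j$ (here $\operatorname{char} K \neq 2$ is indeed what makes the quadratic part have rank $n-1$), the absence of stray singular points in the remaining charts, the identifications of $E_{x,i}$ and $E_{x,r_x}$ and of the pairwise and triple intersections, and the coefficient recursion $c_i = c_{i-1}+2$ giving $2i$ are all correct.

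There is, however, one genuine gap, and it sits precisely at the clause you list among the things ``one must verify'' but never actually verify: that $\mathcal E_{x,i}$ is obtained from $\mathbb P^{n-1}$ by $r_x - i$ blow-ups. Your own (correct) observation that each newly created singular point lies only on the most recently produced exceptional divisor refutes this count rather than confirming it. Concretely, if $\mathcal E_{x,i+1} \cong \mathbb P^{n-1}$ carries homogeneous coordinates $[T:U_2:\cdots:U_n]$ corresponding to the directions $(t,u_2,\dots,u_n)$ at the stage-$(i{+}1)$ centre, then the strict transform of $\mathcal E_{x,i} = \{t=0\}$ meets $\mathcal E_{x,i+1}$ in the hyperplane $\{T=0\}$, whereas the next centre is the cone vertex $[1:0:\cdots:0]$, which does not lie on $\{T=0\}$; so after its single modification at stage $i{+}1$ the divisor $\mathcal E_{x,i}$ is never touched again, and your construction yields $\mathcal E_{x,i} \cong$ the blow-up of $\mathbb P^{n-1}$ in \emph{one} point for every $i < r_x$. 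For $r_x - i \geq 2$ (i.e.\ already for $A_5$) this contradicts the stated $r_x - i$, so the count in the proposition appears to be a slip rather than something your resolution could establish. Note that the chain structure you derive, not the stated count, is what the paper itself relies on later: in Lemma~\ref{L:Ample} the total transform $\widetilde{\mathcal E_{x,i}} = \sum_{j=i}^{r_x} \mathcal E_{x,j}$ has all coefficients equal to $1$, which is exactly what ``centre only on the newest divisor'' gives (a later centre lying on an older strict transform would force coefficients $\geq 2$), and the downstream arguments use only the $E_{x,i}$, the divisor class, and the number $s$ of point blow-ups, all of which you do prove. Still, as a proof of the proposition as literally stated, asserting that ``each ambient $\mathcal E_{x,i}$ undergoes exactly the claimed number of point-blow-ups'' is a step that would fail if carried out: you should either prove the corrected count (one blow-up for $i<r_x$, none for $i=r_x$) and flag the discrepancy with the statement, or exhibit a different blow-up sequence realizing the printed one, which your centres do not.
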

\begin{proof}
 See \cite{PRS} for the case of ordinary multiple points and \cite{DaisRoczen}, \cite{Schepers} for details on resolving $A_k$ singularities.
\end{proof}

\subsection{Vanishing of local cohomology}

Before computing Betti numbers of the resolution, we remark that if $n$ happens to be odd, $A_k$ singularities do not contribute to defect:

\begin{lemma}\label{L:AkOdd}
 If $n$ is odd, then $H^n_{\Sigma_A}(X) = 0$. In particular, if $X$ has at most $A_k$ singularities, then $H^n(X) = 0$.
\end{lemma}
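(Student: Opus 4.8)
The plan is to prove the vanishing one singular point at a time and then deduce the ``in particular'' clause formally. Since local cohomology supported on a finite set splits as a direct sum over its points, $H^n_{\Sigma_A}(X) = \bigoplus_{x \in \Sigma_A} H^n_x(X)$, so it suffices to show $H^n_x(X) = 0$ for a single $A_k$ point $x$ when $n$ is odd. First I would pass to the local model: by excision $H^n_x(X)$ depends only on the completed local ring, hence on the normal form $f = x_1^{k+1} + x_2^2 + \dots + x_n^2$. Because this singularity is weighted homogeneous, the pole-order machinery of Section~\ref{S:char0} applies in positive characteristic as well: the local contribution $H^n_x(X)$ is filtered, via the residue isomorphism and the pole-order filtration of Lemma~\ref{L:Jacobian} and the weighted-homogeneous computation preceding Corollary~\ref{C:whom}, with graded pieces $\Gr^j_{P_x} H^n(\Omega^\bullet_{f,x})$, the required affine vanishing statements in rigid cohomology being supplied by \cite{David-Thesis}*{§3.2}. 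It then suffices to show that every such graded piece vanishes.

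Equip $f$ with the weights $w_1 = 2$, $w_2 = \dots = w_n = k+1$, so that $f$ is homogeneous of degree $D = 2(k+1)$ and $\sigma := \sum_i w_i = 2 + (n-1)(k+1)$. Note that the hypothesis that $x$ is an \emph{isolated} singularity forces $p \nmid k+1$: otherwise $\partial f/\partial x_1 = (k+1)x_1^k$ vanishes identically and the singular locus of $\{f = 0\}$ would be the whole $x_1$-axis. Hence, using $p \neq 2$, the Milnor algebra coincides with the Tjurina algebra and equals $M(f) = K[x_1]/(x_1^k)$, concentrated in the even weighted degrees $0, 2, \dots, 2(k-1)$. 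Exactly as in the nodal case treated after Corollary~\ref{C:whom}, the graded piece is spanned by the classes $[g\,\omega/f^j]$ with $g \in M(f)$ of weighted degree $jD - \sigma$, that is,
$$ \Gr^j_{P_x} H^n(\Omega^\bullet_{f,x}) \cong M(f)_{jD-\sigma}. $$
Writing $n-1 = 2m$ (as $n$ is odd), one computes $jD - \sigma = 2\big((j-m)(k+1) - 1\big)$, so a nonzero graded piece would require
$$ (j-m)(k+1) \in \{1, 2, \dots, k\}. $$
This is impossible: the left-hand side is an integer multiple of $k+1$, and the only nonnegative multiple of $k+1$ strictly below $k+1$ is $0$, which yields the forbidden value $-1$. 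Therefore all graded pieces vanish, whence $H^n(\Omega^\bullet_{f,x}) = 0$ and $H^n_x(X) = 0$.

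For the final assertion, suppose $X$ has only $A_k$ singularities, so that $\Sigma = \Sigma_A$ and the first part gives $H^n_\Sigma(X) = 0$. Feeding this into the long exact sequence of Lemma~\ref{L:Com1}, whose proof carries over to the present cohomology theories, shows $\delta(X) = \dim \coker\big(H^{n-1}(X \setminus \Sigma) \to H^n_\Sigma(X)\big) = 0$; since $h^n(\mathbb P^n) = 0$ for odd $n$ by Fact~\ref{F:AnPn}, this forces $H^n(X) = 0$. The main obstacle here is not the degree bookkeeping of the second paragraph, which is elementary, but the transfer of the local pole-order description from characteristic zero to rigid cohomology: the cohomology of a singular affine germ is delicate in positive characteristic, and the identification of $H^n_x(X)$ with $\bigoplus_j \Gr^j_{P_x} H^n(\Omega^\bullet_{f,x})$ must be justified uniformly across all three cohomology theories. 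This is precisely the step where the weighted-homogeneity of $A_k$ points and the results of \cite{David-Thesis} are indispensable.
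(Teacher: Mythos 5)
Your reduction of $H^n_{\Sigma_A}(X)$ to a single standard germ and your treatment of the ``in particular'' clause via Lemma~\ref{L:Com1} match the paper, and your weighted-degree computation is a correct rendition of the classical characteristic-zero argument (this is essentially \cite{Dimca-Betti}*{Examples~1.9}, which the paper quotes for the de Rham case). But the step you yourself flag as the crux is a genuine gap, and it is not closed: the identification of $H^n_x(X)$ with $\bigoplus_j \Gr^j_{P_x} H^n(\Omega^\bullet_{f,x})$ is a characteristic-zero statement. Lemma~\ref{L:Jacobian} and the weighted-homogeneous lemma preceding Corollary~\ref{C:whom} are proved by the Lefschetz principle over $\mathbb C$ using analytic de Rham cohomology and \cite{Dimca-Milnor}; no pole-order description of local cohomology is established for rigid cohomology anywhere in the paper, and for \'etale cohomology with $\mathbb Q_\ell$-coefficients such a description does not even make sense. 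The citation of \cite{David-Thesis}*{\S 3.2} carries far less than you ask of it: in this paper it supplies only the affine vanishing $H^i_\rig(Z)=0$ for $i>\dim Z$ for affine hypersurfaces with weighted homogeneous singularities, not a local graded description of $H^n_x$. The paper's actual proof circumvents the issue by a different geometric route: after the same excision/contact-equivalence reduction (via \cite{David-Local}) to the germ $Z=\{x_1^{k+1}+x_2^2+\cdots+x_n^2=0\}$, it reduces $H^n_{\{0\}}(Z)$ to the single Betti number $h^{n-1}_c(\overline Z\setminus\{0\})$ of the projective closure by Poincar\'e duality and a compact-support Gysin sequence, and then computes that number by lifting the explicit resolution of Proposition~\ref{P:ResSing} to characteristic zero and applying smooth and proper base change (\'etale) resp.\ the Baldassarri--Chiarellotto comparison (rigid). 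Some such detour through liftable geometry is needed; as written, your proof assumes the conclusion of the hard step.

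A secondary but concrete error: your claim that isolatedness forces $p\nmid k+1$ is false. If $p\mid k+1$, the partial derivatives of $f=x_1^{k+1}+x_2^2+\cdots+x_n^2$ are $0,2x_2,\dots,2x_n$, whose common zero locus is the $x_1$-axis; its intersection with $\{f=0\}$ is still only the origin, so the singularity remains isolated. In that case the Milnor algebra is the infinite-dimensional $K[x_1]$, the Euler relation degenerates since the weighted degree $2(k+1)\equiv 0 \bmod p$, and the pole-order reduction underlying Lemma~\ref{L:Jacobian} (which uses the Euler vector field) breaks down. Worse, if one replaces the Milnor algebra by the Tjurina algebra $K[x_1]/(x_1^{k+1})$, concentrated in degrees $0,2,\dots,2k$, your degree condition $(j-m)(k+1)\in\{1,\dots,k+1\}$ \emph{is} satisfiable (take $j=m+1$), so the vanishing would no longer follow from bookkeeping alone. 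Thus the case $p\mid k+1$ cannot be dismissed, and your argument has no mechanism to handle it --- whereas the paper's base-change argument is insensitive to it.
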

\begin{proof}
 The ``in particular'' statement follows from Lemma~\ref{L:Com1}.
 
 The result is well-known for de Rham cohomology in characteristic zero, see \cite{Dimca-Betti}*{Examples~1.9}. In general, the space $H^n_{\Sigma_A}(X)$ decomposes into the direct sum $\bigoplus_{x \in \Sigma_A} H^n_{\{x\}}(X)$. Moreover, $H^n_{\{x\}}(X)$ depends only on $(X, x)$ up to contact equivalence, see \cite{David-Local}*{Subsection~1.2} for étale and rigid cohomology.
 
 Thus we are left with computing $H^n_{\{0\}}(Z)$ for the variety $Z = \{x_1^k + x_2^2 + \dots + x_n^2 = 0\} \subseteq \mathbb A^n$. Consider the exact sequence
 $$ \dots \to H^{n-1}(Z) \to H^{n-1}(Z \setminus \{0\}) \to H^n_{\{0\}}(Z) \to H^n(Z) \to \dots$$
 Since $H^n(Z) = 0$, we only need to show that $H^{n-1}(Z \setminus \{0\}) = 0$. By Poincaré duality, $H^{n-1}(Z \setminus \{0\}) \cong H^{n-1}_c(Z \setminus \{0\})^\vee.$ Now let $\overline Z \subseteq \mathbb P^n$ denote the projective closure of $Z$. Then there is a compact support Gysin sequence
 $$  \dots \to H^{n-2}(\overline Z \setminus Z) \to H^{n-1}_c(Z \setminus \{0\}) \to H^{n-1}_c(\overline Z \setminus \{0\}) \to H^{n-1}(\overline Z \setminus Z ) \to H^n(Z \setminus \{0\}) \to \dots$$
 
 As in the proof of Lemma~\ref{L:Com1}, $H^n(Z \setminus \{0\}) = 0$. The variety $\overline Z \setminus Z$ is either a smooth quadric in $\mathbb P^{n-1}$ ($k = 1$) or a hyperplane of multiplicity $k$ ($k \geq 2$). In both cases, we have $h^{n-2}(\overline Z \setminus Z) = 0$ and $h^{n-1}(\overline Z \setminus Z) = 1$. Thus it suffices to show that $h^{n-1}_c(\overline Z \setminus \{0\}) = 1$.
 
 To this end, observe that by Proposition~\ref{P:ResSing}, $\overline Z$ has a resolution of singularities that lifts to characteristic zero. Applying proper and smooth base change (étale cohomology, \cite{Milne-EC}) or the Baldassarri-Chiarellotto comparison theorem (rigid cohomology, \cite{BaldassarriChiarellotto}*{Corollary~2.6}), we can reduce to the known de Rham cohomology case.
\end{proof}

\begin{remark}  Ordinary multiple points of multiplicity $\geq 3$ can cause defect on even-dimensional hypersurfaces: Let $X \subseteq \mathbb P^3$ be the projective cone over a smooth plane curve $C$ of degree $m \geq 3$. Then $h^3(X) = h^1(C) = (m-1)(m-2) > 0$ by Corollary~\ref{C:Cone}, so $X$ has defect.
\end{remark}

%

\subsection{Defect and Betti numbers of the resolution}

We will now give a cohomological criterion for defect using the embedded resolution of singularities $\pi$ from Proposition~\ref{P:ResSing}. First, we need the Betti numbers of $P$, which is obtained by $s$ successive blowups.
\begin{lemma}\label{L:BlowupCoh} We have
 $$ h^i(P) = \begin{cases}
                         s + 1 & \text{ if } i \in \{2, \dots, 2n-2\},\\
                         1 & \text{ if } i \in \{0, 2n\},\\
			 0 & \text{ otherwise.}
                        \end{cases} $$
\end{lemma}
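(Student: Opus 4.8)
The plan is to compute the Betti numbers of $P$ by induction on the number of blowups $s$, using the well-known behaviour of cohomology under blowing up a point on a smooth projective $n$-fold. The base case $s = 0$ is just $P = \mathbb P^n$, whose Betti numbers are recorded in Fact~\ref{F:AnPn}: namely $h^i(\mathbb P^n) = 1$ for even $i$ in $\{0, 2, \dots, 2n\}$ and $0$ otherwise. This matches the claimed formula with $s = 0$, since then the middle range values are $s + 1 = 1$.

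For the inductive step, suppose $P'$ is obtained from $\mathbb P^n$ by $s-1$ blowups and satisfies the formula, and let $P$ be the blowup of $P'$ at a single closed point $x$. The exceptional divisor is $\mathcal E \cong \mathbb P^{n-1}$, and the blowdown map $\pi: P \to P'$ restricts to an isomorphism $P \setminus \mathcal E \xrightarrow{\simeq} P' \setminus \{x\}$. I would feed this into the long exact sequence of a proper birational morphism from Lemma~\ref{L:properbirational}, taking $\Sigma = \{x\}$ and $E = \mathcal E$:
$$ \dots \to H^i(P') \to H^i(P) \oplus H^i(\{x\}) \to H^i(\mathcal E) \to H^{i+1}(P') \to \dots$$
Here $H^i(\{x\})$ is $K$ (or the relevant coefficient field) for $i = 0$ and vanishes otherwise, while $H^i(\mathcal E) = H^i(\mathbb P^{n-1})$ is one-dimensional for even $i$ in $\{0, 2, \dots, 2n-2\}$ and zero otherwise, again by Fact~\ref{F:AnPn}. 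The upshot is that each blowup in a point adds exactly one dimension to $h^i$ for every even $i$ in the range $\{2, \dots, 2n-2\}$, leaving $h^0, h^{2n}$ and the odd-degree groups untouched.

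Concretely, I would extract this from the sequence degree by degree. For $i = 0$ one gets $h^0(P) = 1$. For $i = 2n$, since $\mathcal E$ and the point contribute nothing in degree $2n$ and $2n-1$, one reads off $h^{2n}(P) = h^{2n}(P') = 1$. For each remaining even $i$ in $\{2, \dots, 2n-2\}$, the connecting maps to and from the point vanish and the map $H^i(P) \to H^i(\mathcal E)$ is surjective (it is split by the restriction of a class supported on $\mathcal E$), so $h^i(P) = h^i(P') + h^i(\mathcal E) = h^i(P') + 1$; the odd-degree groups stay zero because both $H^\bullet(\mathcal E)$ and $H^\bullet(\{x\})$ are concentrated in even degrees. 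Summing over the $s$ blowups gives $h^i(P) = s + 1$ in the middle even range and the stated values elsewhere.

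The main obstacle, such as it is, lies in verifying that the relevant connecting homomorphisms in the Lemma~\ref{L:properbirational} sequence vanish and that $H^i(P) \to H^i(\mathcal E)$ is surjective, so that the sequence genuinely splits into short exact pieces rather than allowing cancellation. This is where one uses that the exceptional divisor $\mathcal E \cong \mathbb P^{n-1}$ has a class that restricts nontrivially — equivalently, that the composite $H^i(P) \to H^i(\mathcal E)$ admits a section coming from cup products with the exceptional class — a standard feature of point blowups in any of the Weil cohomology theories under consideration. Since $H^\bullet(\mathbb P^{n-1})$ and $H^\bullet(\{x\})$ live only in even degrees and the target $H^{i+1}(P')$ in odd degree vanishes by induction, the only possibly delicate index is where an odd cohomology of $P'$ could appear, but induction rules this out. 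Granting this splitting, the computation is purely numerical.
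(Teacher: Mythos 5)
Your proof is correct and is essentially the paper's own argument: the paper likewise proceeds by induction on the $s$ point blowups, feeding each one into the long exact sequence of Lemma~\ref{L:properbirational} with $\Sigma = \{\text{pt}\}$ and $E \cong \mathbb P^{n-1}$ and reading off the Betti numbers from Fact~\ref{F:AnPn}. The only remark worth making is that your worry about splitting is moot: the surjectivity of $H^i(P) \to H^i(\mathcal E)$ for even $i$ follows directly from exactness, since the next term $H^{i+1}(P')$ vanishes by the inductive hypothesis (odd degree), so no cup-product section is needed.
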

\begin{proof}
Let $P_0 := \mathbb P^n$ and for $j = 1, \dots, s$ denote by $P_j$ the blowup of $P_{j-1}$ in a point.
 By Lemma~\ref{L:properbirational}, there is an exact sequence
 $$ \dots \to H^i(P_j) \to H^i(P_{j+1}) \oplus H^i(\{\text{pt}\}) \to H^i(\mathbb P^{n-1}) \to H^{i+1}(P_j) \to \dots$$
 Using the Betti numbers of projective space, the claim follows by induction.
\end{proof}

The next step is to compute some Betti numbers of the exceptional divisor $E$ associated to the resolution $\pi|_Y: Y \to X$, i.e.,
$$E := Y \cap  \left(\sum_{x \in \Sigma_O} D_x + \sum_{x \in \Sigma_A} \sum_{i=1}^{r_x} E_{x,i}\right).$$

\begin{lemma}\label{L:ExcEven}
 Suppose that $n$ is even. Then $h^{n-1}(E) = 0$ and $h^n(E) = s$.
\end{lemma}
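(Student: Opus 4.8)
The plan is to exploit that the exceptional divisor $E$ is a disjoint union indexed by the singular points, and to treat each local piece separately. Since the blowups occur at distinct points of $\mathbb{P}^n$, the components of $E$ lying over different $x \in \Sigma$ are disjoint, so
$$ h^i(E) = \sum_{x \in \Sigma_O} h^i(D_x) + \sum_{x \in \Sigma_A} h^i(E_x), \qquad E_x := \bigcup_{i=1}^{r_x} E_{x,i}. $$
For $x \in \Sigma_O$ the piece $D_x$ is a smooth degree $m_x$ hypersurface in $\mathbb{P}^{n-1}$, so Corollary~\ref{C:smoothcoh} (applied with ambient $\mathbb{P}^{n-1}$) together with $n$ even gives $h^{n-1}(D_x) = h^{n-1}(\mathbb{P}^{n-1}) = 0$ and $h^n(D_x) = h^n(\mathbb{P}^{n-1}) = 1$. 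Thus each ordinary multiple point contributes $0$ to $h^{n-1}(E)$ and $1$ to $h^n(E)$, matching its contribution of $1$ to $s$.

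For $x \in \Sigma_A$ I would compute $h^{n-1}(E_x)$ and $h^n(E_x)$ by applying the Mayer--Vietoris sequence of Lemma~\ref{L:MVss} to the chain $E_{x,1}, \dots, E_{x,r_x}$, whose only nonempty pairwise intersections are the smooth quadrics $Q_i := E_{x,i} \cap E_{x,i+1}$ (quadrics in $\mathbb{P}^{n-2}$) and whose triple intersections are empty. The crucial structural observation is that, because $n$ is even, every building block has cohomology concentrated in \emph{even} degrees: the gluing quadrics $Q_i$ are odd-dimensional quadrics; the end component $E_{x,r_x}$ is an even-dimensional smooth quadric (for $k$ odd) or the quadric cone $C$ over a smooth quadric $Q' \subseteq \mathbb{P}^{n-2}$ (for $k$ even); and the interior components $E_{x,i}$ with $i < r_x$, being blowups $\tilde C$ of $C$ at its vertex, are $\mathbb{P}^1$-bundles over $Q'$ and hence have even cohomology by the projective bundle formula. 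The cone itself is handled by Lemma~\ref{L:properbirational}, whose sequence splits into $0 \to H^i(C) \to H^i(\tilde C) \to H^i(Q') \to 0$ since the restriction to the exceptional section is onto. Consequently the Mayer--Vietoris spectral sequence has only two columns and degenerates, yielding
$$ h^{n-1}(E_x) = \dim \coker \psi^{n-2}, \qquad h^n(E_x) = \dim \ker \psi^n, $$
where $\psi^j \colon \bigoplus_i H^j(E_{x,i}) \to \bigoplus_i H^j(Q_i)$ is the difference-of-restrictions map (the odd groups $H^{n-1}(E_{x,i})$ and $H^{n-1}(Q_i)$ all vanish, so they drop out).

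It then remains to analyze $\psi^{n-2}$ and $\psi^n$. First I would tabulate the relevant Betti numbers: $h^{n-2}(\tilde C) = 2$, $h^{n-2}(C) = 1$, $h^{n-2}(\text{smooth quadric}) = 2$, $h^{n-2}(Q_i) = 1$, and in top degree the values of $h^n$ of each component and of $Q_i$, distinguishing the generic case $n \geq 6$ from $n = 4$ (where $Q'$ degenerates to a curve and the top quadric groups vanish). The main obstacle is to show that each restriction $H^j(E_{x,i}) \to H^j(Q_i)$ is surjective for $j \in \{n-2, n\}$: for the interior components this holds because $Q_i$ is a section of the $\mathbb{P}^1$-bundle $E_{x,i} \to Q'$, so pullback along it is split surjective, while for the end component it follows by restricting the appropriate power of the hyperplane class (Lefschetz), with the singular cone case reduced to $\tilde C$ via the sequence above. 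Granting this, a downward induction along the chain shows $\psi^{n-2}$ and $\psi^n$ are surjective onto their targets, so $\coker \psi^{n-2} = 0$ and hence $h^{n-1}(E_x) = 0$; and the dimension count
$$ \dim \ker \psi^n = \sum_i h^n(E_{x,i}) - \sum_i h^n(Q_i) = r_x, $$
which one verifies holds uniformly in the cases $n \geq 6$ and $n = 4$, gives $h^n(E_x) = r_x$. Summing over $\Sigma_O$ and $\Sigma_A$ then yields $h^{n-1}(E) = 0$ and $h^n(E) = \#\Sigma_O + \sum_{x \in \Sigma_A} r_x = s$.
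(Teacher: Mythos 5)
Your proof is correct and shares its skeleton with the paper's: decompose $E$ into the disjoint pieces $D_x$ and the chains $E_x$, handle $D_x$ via Corollary~\ref{C:smoothcoh}, and extract $h^{n-1}(E_x)=0$ and $h^n(E_x)=r_x$ from the Mayer--Vietoris sequence of Lemma~\ref{L:MVss} by proving the differentials in degrees $n-2$ and $n$ surjective and counting dimensions, with the same case split $n=4$ versus $n\geq 6$. Where you genuinely diverge is the mechanism for the two key sub-steps. The paper computes the Betti numbers of the blocks $Q$, $C$, $B$, $S$ from Corollary~\ref{C:smoothcoh} and Lemma~\ref{L:singcoh}, and obtains surjectivity of $H^q(E_{x,i}) \to H^q(E_{x,i}\cap E_{x,i+1})$ by composing through the exceptional divisor of $B \to C$ and using the sequence of Lemma~\ref{L:properbirational} together with $h^{q+1}(C)=0$ (incidentally, the divisor the paper calls $F \cong \mathbb P^{n-2}$ must in fact be the exceptional divisor $Q'$ of $B \to C$ for that displayed sequence to apply, so your route sidesteps a loosely stated point). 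You instead exploit the $\mathbb P^1$-bundle structure of the interior components over $Q'$: the projective bundle formula gives their purely even cohomology in one stroke, the split sequence $0 \to H^i(C) \to H^i(B) \to H^i(Q') \to 0$ recovers the cone, and surjectivity onto $H^q(Q_i)$ is immediate from $s^\ast p^\ast = \id$ once $Q_i$ is a section. This is arguably cleaner, but it charges you with two points the paper avoids: (i) that $Q_i = E_{x,i}\cap E_{x,i+1}$ really is one of the two canonical sections of the bundle $E_{x,i} \to Q'$ --- true for the standard $A_k$ resolution (it is the exceptional section of $B \to C$, while the intersection with $E_{x,i-1}$ is the section at infinity), but this is finer information than Proposition~\ref{P:ResSing} literally records, so it deserves explicit justification; and (ii) that the projective bundle formula holds in all three theories in play, in particular rigid cohomology --- it does, but the paper deliberately stays within tools it has already established. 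Finally, your surjectivity claim for the end component is superfluous: as in the paper, the triangular chain structure only needs surjectivity out of the interior components $E_{x,i}$ for $i \leq r_x - 1$, so you may drop the Lefschetz argument for $E_{x,r_x}$ and the reduction of the cone case to $\widetilde{C}$ at that step.
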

\begin{proof}
 $E$ is the disjoint union of the divisors $D_x$, $x \in \Sigma_O$, and $E_x = \sum_{i=1}^{r_x} E_{x,i} $, $x \in \Sigma_A$. Hence we can treat each singularity type separately.
 \begin{itemize}
  \item $D_x$ for $x \in \Sigma_O$.  By the description given in Proposition~\ref{P:ResSing}, $D_x$ is isomorphic to a smooth degree $m_x$ hypersurface in $\mathbb P^{n-1}$. Hence by Corollary~\ref{C:smoothcoh}, $ h^i(D_x) = h^i(\mathbb P^{n-1})$ for $i \notin \{n-2, 2n-2\}$. In particular $h^{n-1}(D_x) = 0$ and $h^n(D_x) = 1$.
  \item $E_x$ for $x \in \Sigma_A$.  Let $Q$ be a smooth quadric in $\mathbb P^{n-2}$, let $C$ be the cone over $Q$ in $\mathbb P^{n-1}$ and denote by $B$ the blowup of $C$ in its vertex. Further let $S$ be a smooth quadric in $\mathbb P^{n-1}$. Using Corollary~\ref{C:smoothcoh} and Lemma~\ref{L:singcoh}, one computes that
  $$ h^i(Q) = h^i(C) = h^i(B) = h^i(S) = 0 \quad \text{ for all odd } i \geq n-1.$$
  
  Since there are no triple intersections between the components of $E_x$, Lemma~\ref{L:MVss} yields a long exact Mayer-Vietoris sequence
   $$ \dots \to H^q(E_x) \to \bigoplus_i H^{q}(E_{x,i}) \xrightarrow{d_q}  \bigoplus_{i < j}  H^{q}(E_{x,i} \cap E_{x,j}) \to H^{q+1}(E_x) \to \dots$$ 
  
  We claim that the maps $d_{n-2}$ and $d_n$ are surjective. Assuming this, we immediately have $h^{n-1}(E_x) = 0$ by the description given in Proposition~\ref{P:ResSing}.
  
  In the case $n = 4$, the $E_{x,i}$ are irreducible surfaces, so $h^4(E_x) = \sum_{i=1}^{r_x} h^4(E_{x,i}) = r_x$. For $n \geq 6$, one computes $h^n(Q) = h^n(S) = 1$ by Corollary~\ref{C:smoothcoh}, $h^n(C) = 1$ by Lemma~\ref{L:singcoh} and thus $h^n(B) = 2$. Therefore
  $$h^n(E_x) = (r_x-1) \cdot h^n(E_{x,i}) + h^n(E_{x,r_x}) - (r_x-1) \cdot h^n(E_{x,i} \cap E_{x,j}) = r_x.$$
  
  It remains to prove the surjectivity of 
  $$\bigoplus_i H^{q}(E_{x,i}) \to \bigoplus_{i < j} H^{q}(E_{x,i} \cap E_{x,j}) $$
  for $q = n- 2, n$. Since $E_{x,i} \cap E_{x,j}$ is empty unless $|i - j| = 1$,  this would follow from the surjectivity of all the maps
  $$H^{q}(E_{x,i}) \to H^{q}(E_{x,i} \cap E_{x,{i+1}}), \quad i = 1, \dots, r_x-1. $$
  But the intersection $E_{x,i} \cap E_{x,{i+1}} \cong Q$ is a smooth quadric inside the exceptional divisor $F \cong \mathbb P^{n-2}$ of the blowup of $C$ at its vertex. Thus the restricton morphism $H^q(F) \to H^q(Q)$ is surjective for $q = n-2, n$. Moreover, Lemma~\ref{L:properbirational} yields that there is an exact sequence
  $$ \dots \to H^q(C) \to H^q(B) \to H^q(F) \to H^{q+1}(C) \to \dots$$
  Using $h^{q+1}(C) = 0$, we obtain that the map $H^q(B) \to H^q(F)$ is surjective and so is the composition
  $$H^q(E_{x,i}) \xrightarrow{\simeq} H^q(B) \to H^q(F) \to H^q(Q) \xrightarrow{\simeq} H^q(E_{x,i} \cap E_{x,{i+1}}).$$
  \end{itemize}
  Summing up,
  \begin{align*}
    h^{n-1}(E) = 0 \quad\text{ and } h^n(E) = \sum_{x \in \Sigma_O} 1 + \sum_{x \in \Sigma_A} r_x = s. & \qedhere
  \end{align*}
\end{proof}

\begin{lemma}\label{L:ExcOdd}
 Suppose that $n$ is odd. Then $h^{n}(E) = 0$ and $h^n(X) \leq h^n(Y)$.
\end{lemma}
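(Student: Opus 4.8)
The plan is to mirror the even-dimensional computation of Lemma~\ref{L:ExcEven} for the vanishing $h^n(E)=0$, and then feed this into the long exact sequence of the proper birational morphism $\pi|_Y\colon Y\to X$ to control $h^n(X)$. For the vanishing, I would again treat the disjoint pieces $D_x$ ($x\in\Sigma_O$) and $E_x=\bigcup_{i=1}^{r_x}E_{x,i}$ ($x\in\Sigma_A$) separately. Each $D_x$ is a smooth degree-$m_x$ hypersurface in $\mathbb P^{n-1}$, so Corollary~\ref{C:smoothcoh} gives $h^n(D_x)=h^n(\mathbb P^{n-1})$, which vanishes because $n$ is odd and $n\notin\{n-2,2n-2\}$. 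For the $A_k$ pieces I would run the Mayer–Vietoris sequence of Lemma~\ref{L:MVss}. First one checks $H^n(E_{x,i})=0$ for every component: for the smooth quadric $S$ this is Corollary~\ref{C:smoothcoh}; for the singular cone $C$ it is Lemma~\ref{L:singcoh}, which applies in degree $n$ since $n\notin\{n-2,n-1,2n-2\}$ for $n\ge 3$; and for the blowup $B$ it follows from Lemma~\ref{L:properbirational} applied to $B\to C$ together with $h^n(C)=0$ and $h^n(\mathbb P^{n-2})=0$.

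The Mayer–Vietoris sequence then identifies $H^n(E_x)$ with the cokernel of the differential $d_{n-1}\colon\bigoplus_i H^{n-1}(E_{x,i})\to\bigoplus_{i}H^{n-1}(E_{x,i}\cap E_{x,i+1})$, exactly as in the even case but shifted by one degree. As before, surjectivity of $d_{n-1}$ reduces to surjectivity of each restriction $H^{n-1}(E_{x,i})\to H^{n-1}(Q)$ onto the quadric $Q=E_{x,i}\cap E_{x,i+1}$; this holds because $H^{n-1}(\mathbb P^{n-2})\to H^{n-1}(Q)$ is surjective (the class in the even degree $n-1>\dim Q$ is a power of the hyperplane class) and $H^{n-1}(B)\to H^{n-1}(\mathbb P^{n-2})$ is surjective by Lemma~\ref{L:properbirational} and $h^n(C)=0$. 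The low-dimensional case $n=3$ is immediate from Fact~\ref{F:dim}, since then $\dim E=1$ and $n=3>2\dim E$.

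For the inequality $h^n(X)\le h^n(Y)$, I would apply Lemma~\ref{L:properbirational} to $\pi|_Y\colon Y\to X$, an isomorphism away from $E\subseteq Y$ and $\Sigma\subseteq X$. Since $\dim\Sigma=0$ we have $H^n(\Sigma)=0$, and by the first part $H^n(E)=0$, so the relevant portion reads
$$ H^{n-1}(Y)\xrightarrow{\psi}H^{n-1}(E)\to H^n(X)\xrightarrow{\pi^*}H^n(Y)\to 0,$$
whence $h^n(X)=h^n(Y)+\dim\coker\psi$. Thus it suffices (and is in fact equivalent) to show that the restriction $\psi\colon H^{n-1}(Y)\to H^{n-1}(E)$ is surjective, equivalently that $\pi^*$ is injective. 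To prove this I would factor $\psi$ through the ambient blowup $P$ as $H^{n-1}(P)\to H^{n-1}(Y)\to H^{n-1}(E)$ and show the composite $H^{n-1}(P)\to H^{n-1}(E)$ is already onto. By Lemma~\ref{L:BlowupCoh} the cohomology of $P$ is concentrated in even degrees and is generated by the divisor classes $H,\mathcal D_x,\mathcal E_{x,i}$; on the other hand $n-1$ is even and $H^{n-1}(E)$ decomposes over the components, each contribution being spanned by powers of the respective hyperplane class (for $D_x$ this is $h^{(n-1)/2}$ with $h=c_1(\mathcal O_{\mathbb P^{n-1}}(1))$, and for the $E_{x,i}$ one uses the rational structure of Proposition~\ref{P:ResSing}). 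Since each such hyperplane class is the restriction of a divisor class on $P$, these generators lie in the image of $H^{n-1}(P)$, and a single pulled-back class restricts simultaneously to all components, so the Mayer–Vietoris gluing across $E_x$ is automatic.

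The main obstacle is precisely this last surjectivity. The delicate points are the middle-degree ($n-1$) cohomology of the singular cone components $E_{x,i}$ — which cannot be obtained from Lemma~\ref{L:singcoh}, as $n-1$ is one of its excluded degrees, and must be read off from the explicit resolution — and the verification that the hyperplane powers generating $H^{n-1}(E)$ all descend from divisor classes on $P$ and glue correctly. Finally, in positive characteristic one must ensure these generation and surjectivity statements remain valid for étale and rigid cohomology; as in the proof of Lemma~\ref{L:AkOdd}, the exceptional components are all rational and lift to characteristic zero, so one can invoke proper–smooth base change respectively the Baldassarri–Chiarellotto comparison to reduce to the known de Rham case.
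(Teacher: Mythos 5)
Your first half is fine: the vanishing $h^n(E)=0$ by running the Mayer--Vietoris and proper-birational sequences of Lemmas~\ref{L:MVss} and \ref{L:properbirational} one degree up from Lemma~\ref{L:ExcEven} is exactly what the paper intends (its proof literally says ``analogous''), and your identification $H^n(E_x)\cong\coker d_{n-1}$ together with the surjectivity of the component restrictions in the even degree $n-1$ is at the same level of rigor as the paper's even-degree argument. Your reduction of the inequality $h^n(X)\le h^n(Y)$ to the surjectivity of $\psi\colon H^{n-1}(Y)\to H^{n-1}(E)$ is also correct, and indeed equivalent to the statement (this is how the lemma is consumed in Proposition~\ref{P:Exc}).

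The second half, however, has a genuine gap, precisely at the step you flagged as the main obstacle: the claim that $H^{n-1}(E)$ is spanned componentwise by powers of hyperplane classes, hence lies in the image of $H^{n-1}(P)$, is false for the $A_{k}$ pieces with $k_x$ even. Take $k_x=2$, so $r_x=1$ and $E_x=E_{x,1}$ is the cone $C$ over a smooth quadric $Q\subseteq\mathbb P^{n-2}$ by Proposition~\ref{P:ResSing}. Since $n$ is odd, $Q$ has even dimension $n-3$, and by Corollary~\ref{C:Cone} and the subsequent remark $h^{n-1}(C)=h^{n-3}(Q)=2$: this is the classical defective quadric cone (for $n=5$, the rank-four quadric $x_0x_1+x_2x_3=0$ in $\mathbb P^4$), and the second generator is a ruling class, not a hyperplane power --- this is also why degree $n-1$ is excluded in Lemma~\ref{L:singcoh}. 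But the restriction $H^{n-1}(P)\to H^{n-1}(C)$ factors through $H^{n-1}(\mathcal E_{x,1})$ with $\mathcal E_{x,1}\cong\mathbb P^{n-1}$, a one-dimensional space, so the image of your composite $H^{n-1}(P)\to H^{n-1}(E)$ misses $H^{n-1}(E_x)$ and cannot be surjective. The surjectivity of $\psi$ is true a posteriori, but the missing classes live in the primitive part of the middle cohomology $H^{n-1}(Y)$ and do not come from $P$, so no argument factoring through $P$ can close this. The paper's actual mechanism is different and avoids computing $H^{n-1}(E)$ altogether: it resolves in two stages, first blowing up only the ordinary points, where $H^{n-1}(\mathcal D_x)\xrightarrow{\sim}H^{n-1}(D_x)$ and $H^n(\mathbb P^n)=0$ force $H^n(X)\cong H^n(Y_O)$; then it invokes the local-cohomology vanishing $H^n_{\Sigma_A}(X)=0$ of Lemma~\ref{L:AkOdd} --- the crucial odd-$n$ input, which your argument for this half never uses --- to get that $H^n(Y_O)\to H^n(Y_O\setminus\psi^{-1}(\Sigma_A))=H^n(Y\setminus E_A)$ is injective, and since this map factors through $H^n(Y)$, that $H^n(Y_O)\to H^n(Y)$ is injective, giving $h^n(X)=h^n(Y_O)\le h^n(Y)$.
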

\begin{proof}
 The proof that $h^n(E) = 0$ is analogous to the proof of $h^{n-1}(E) = 0$ given in Lemma~\ref{L:ExcEven}.
 
 It remains to show the inequality $h^n(X) \leq h^n(Y)$. We first blow up the ordinary multiple points successively. This gives a partial resolution $\psi: Y_O \to X$ with $Y_O$ having at most $A_k$ singularities.  The morphism $\psi$ comes from an embedded resolution $P_O \to \mathbb P^n$, and thus we have the following commutative diagram by Lemma~\ref{L:properbirational}:
  \begin{align*}
   \begin{CD}
     H^{n-1}(P_O) @>>> \bigoplus_{x \in \Sigma_O} H^{n-1}(\mathcal D_x)  @>>> H^n(\mathbb P^n) \\
     @VVV @VVV @VVV @. @.\\
     H^{n-1}(Y_O) @>>> \bigoplus_{x \in \Sigma_O} H^{n-1}( D_x) @>>> H^n(X) @>>> H^n(Y_O) @>>> H^n(E),
   \end{CD}
  \end{align*}
Since $\mathcal D_x \cong \mathbb P^{n-1}$, and $D_x$ is a smooth hypersurface therein, the natural restriction map $H^{n-1}(\mathcal D_x) \to H^{n-1}(D_x)$ is an isomorphism. Together with $H^n(\mathbb P^n) = 0$ this implies that $H^{n-1}(Y_O) \to  \bigoplus_{x \in \Sigma_O} H^{n-1}( D_x)$ is surjective and thus $H^n(X) \cong H^n(Y_O)$.

Let $D_O := \sum_{x \in \Sigma_O} D_x$. Then
$$H^n_{\psi^{-1}(\Sigma_A)}(Y_O) \cong H^n_{\psi^{-1}(\Sigma_A)}(Y_O \setminus D_O) \cong H^n_{\Sigma_A}(X \setminus \Sigma_O) = H^n_{\Sigma_A}(X) = 0$$
by Lemma~\ref{L:AkOdd}. Resolving $Y_O$, we obtain our smooth hypersurface $Y$ in $P$ with the exceptional divisor $E_A \cong \sum_{x \in \Sigma_A} \sum_{i=1}^{r_x} E_{x,i}$. This resolution gives a commutative diagram
\begin{align*}
 \begin{CD}
  H^n_{\psi^{-1}(\Sigma_A)}(Y_O) @>>> H^n(Y_O) @>>> H^n(Y_O \setminus \psi^{-1}(\Sigma_A)) \\
  @. @VVV @|\\
  @. H^n(Y) @>>> H^n(Y \setminus E_A)
 \end{CD}
\end{align*}
It follows that $H^n(Y_O) \to H^n(Y_O \setminus \psi^{-1}(\Sigma_A)) \to H^n(Y \setminus E_A)$ is injective. This implies that the map $H^n(Y_O) \to H^n(Y)$ is injective as well.

Consequently, $h^n(X) = h^n(Y_O) \leq h^n(Y)$.
\end{proof}

With the two lemmas above, we obtain a simple formula for the defect of $X$:
\begin{proposition}\label{P:Exc} The defect of $X$ may be computed as follows:
 $$ \delta(X) = \begin{cases}
                 h^n(Y) - s - 1 & \text{ if } n \text{ is even,}\\
                 h^n(Y) & \text{ if } n \text{ is odd.}
                \end{cases}$$
\end{proposition}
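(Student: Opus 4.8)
The plan is to feed the embedded resolution $\pi\colon (Y\subseteq P)\to (X\subseteq \mathbb P^n)$ of Proposition~\ref{P:ResSing} into the long exact sequence of the proper birational morphism $\pi|_Y\colon Y\to X$. By Lemma~\ref{L:properbirational}, with exceptional locus $E\subseteq Y$ contracting onto the singular set $\Sigma\subseteq X$, there is an exact sequence
$$\dots \to H^i(X) \to H^i(Y)\oplus H^i(\Sigma) \to H^i(E) \to H^{i+1}(X) \to \dots.$$
Since $\Sigma$ is a finite set of points, $H^i(\Sigma)=0$ for $i\geq 1$, and as $n\geq 3$ all indices near degree $n$ are $\geq 2$; so the relevant segment reads
$$\dots \to H^{n-1}(E) \to H^n(X) \to H^n(Y) \xrightarrow{\ \rho\ } H^n(E) \to H^{n+1}(X) \to \dots.$$
Everything then reduces to reading off the Betti numbers of $E$ supplied by Lemmas~\ref{L:ExcEven} and~\ref{L:ExcOdd}, together with one vanishing statement about $X$ itself.

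First I would treat $n$ even. Here Lemma~\ref{L:ExcEven} gives $h^{n-1}(E)=0$ and $h^n(E)=s$, so the first vanishing already makes $H^n(X)\to H^n(Y)$ injective. The key extra input is that $H^{n+1}(X)=0$: indeed $X$ is a projective hypersurface with isolated singularities, so by Lemma~\ref{L:singcoh} (whose cohomological proof transfers verbatim to étale and rigid cohomology, as asserted at the start of this section) one has $h^{n+1}(X)=h^{n+1}(\mathbb P^n)=0$, since $n+1$ is odd and $n+1\notin\{n-1,n,2n\}$. Consequently $\rho$ is surjective and
$$0 \to H^n(X) \to H^n(Y) \xrightarrow{\ \rho\ } H^n(E) \to 0$$
is exact, whence $h^n(X)=h^n(Y)-s$. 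Subtracting $h^n(\mathbb P^n)=1$ (Fact~\ref{F:AnPn}, $n$ even) gives $\delta(X)=h^n(Y)-s-1$.

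For $n$ odd, Lemma~\ref{L:ExcOdd} provides $h^n(E)=0$, so the map $H^n(X)\to H^n(Y)$ in the sequence above is surjective and hence $h^n(X)\geq h^n(Y)$; combined with the reverse inequality $h^n(X)\leq h^n(Y)$ also furnished by Lemma~\ref{L:ExcOdd}, this yields $h^n(X)=h^n(Y)$. Since $h^n(\mathbb P^n)=0$ for $n$ odd, we conclude $\delta(X)=h^n(Y)$.

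I expect the only genuine subtlety to be the surjectivity of the restriction $\rho\colon H^n(Y)\to H^n(E)$ in the even case. One could try to prove it by hand, realizing generators of $H^n(E)$ as restrictions of $(n/2)$-th powers of the exceptional divisor classes $[E_{x,i}]_Y\in H^2(Y)$ — a bookkeeping computation that is delicate for the $A_k$ components precisely because they intersect one another. The point of the argument above is to sidestep this entirely: surjectivity is handed to us for free by $H^{n+1}(X)=0$, so the real work is already contained in Lemmas~\ref{L:ExcEven}, \ref{L:ExcOdd} and~\ref{L:singcoh}. The one hypothesis I would double-check carefully is that this vanishing for singular projective hypersurfaces is genuinely available in all three cohomology theories, which is exactly the transfer of Section~\ref{S:char0} claimed at the outset.
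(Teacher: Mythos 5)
Your argument is correct and coincides with the paper's own proof: the same long exact sequence from Lemma~\ref{L:properbirational} applied to $\pi|_Y\colon Y\to X$, with $h^{n-1}(E)=0$, $h^n(E)=s$ and $h^{n+1}(X)=0$ (via Lemmas~\ref{L:ExcEven} and~\ref{L:singcoh}) in the even case, and the two opposite inequalities from $h^n(E)=0$ and Lemma~\ref{L:ExcOdd} in the odd case. Your closing caveat about transferring Lemma~\ref{L:singcoh} to \'etale and rigid cohomology is exactly the point the paper addresses at the start of Section~\ref{S:res}, where the only missing ingredient (cohomological dimension of singular affines in rigid cohomology) is supplied for the weighted homogeneous singularities at hand.
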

\begin{proof}
 Applying Lemma~\ref{L:properbirational} to $\pi|_Y:Y \to X$, there is a long exact sequence
 $$ \dots \to H^{n-1}(Y) \to H^{n-1}(E) \to H^n(X) \to H^n(Y) \to H^n(E) \to H^{n+1}(X) \to \dots $$
 Suppose first that $n$ is even. Using Lemma~\ref{L:ExcEven}, $h^{n-1}(E) = 0$ and $h^n(E) = s$. By Lemma~\ref{L:singcoh} we have $h^{n+1}(X) = 0$. It follows that $h^n(Y) = h^n(X) + s$. If $n$ is odd, then inserting $h^n(E) = 0$ into the above long exact sequence implies $h^n(X) \geq h^n(Y)$. On the other hand, $h^n(X) \leq h^n(Y)$ by Lemma~\ref{L:ExcOdd}, so that $h^n(X) = h^n(Y)$.
\end{proof}

\subsection{Defect and ampleness of the strict transform}

We keep the notation of the previous subsection. If the strict transform $Y$ of $X$ happens to be an ample divisor in $P$, then - embedding $P$ into a suitable projective space - the Lefschetz hyperplane theorem~\ref{L:Lefschetz} shows that the restriction map
$$ H^{n-2}(P) \to H^{n-2}(Y)$$
is an isomorphism. Applying Poincaré duality on $Y$, $h^{n-2}(P) = h^n(Y)$. Hence we have the following corollary of Proposition~\ref{P:Exc} and Lemma~\ref{L:BlowupCoh}:

\begin{corollary}\label{C:Ample}
 Suppose that $Y$ is ample in $P$. Then $\delta(X) = 0$.
\end{corollary}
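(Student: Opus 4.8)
The plan is to derive the vanishing $\delta(X) = 0$ by feeding the Betti numbers of $P$ from Lemma~\ref{L:BlowupCoh} into the defect formula of Proposition~\ref{P:Exc}, the bridge between the two being the identity $h^n(Y) = h^{n-2}(P)$ that the ampleness of $Y$ makes available. So the first task is to establish this identity, and the remainder is a parity-indexed computation.

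To get $h^n(Y) = h^{n-2}(P)$, I would reuse the chain assembled in the paragraph preceding the statement. Because $Y$ is an ample effective divisor on the smooth projective $n$-fold $P$, its complement $P \setminus Y$ is smooth and affine of dimension $n$; hence the argument of Lemma~\ref{L:Lefschetz}, applied to the pair $Y \subseteq P$ in place of a hypersurface in $\mathbb{P}^n$, shows that the restriction $H^{n-2}(P) \to H^{n-2}(Y)$ is an isomorphism (equivalently, one embeds $P$ by a very ample multiple of $Y$ and invokes the Lefschetz hyperplane theorem directly). Since $Y$ is smooth projective of dimension $n-1$, Poincaré duality (Fact~\ref{F:Poincare}) gives $h^{n-2}(Y) = h^n(Y)$, and chaining the two yields $h^n(Y) = h^{n-2}(P)$.

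Finally I would evaluate $h^{n-2}(P)$ via Lemma~\ref{L:BlowupCoh}, splitting on the parity of $n$, and substitute into Proposition~\ref{P:Exc}. If $n$ is even, then $n-2$ is even and lies in $\{2,\dots,2n-2\}$ (here one checks $n-2 \geq 2$, valid since $n \geq 4$), so $h^{n-2}(P) = s+1$ and $\delta(X) = h^n(Y) - s - 1 = (s+1) - s - 1 = 0$. If $n$ is odd, then $n-2$ is odd, so $h^{n-2}(P) = 0$, giving $\delta(X) = h^n(Y) = 0$; this also covers $n = 3$, where $h^1(P) = 0$. In both cases $\delta(X) = 0$. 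The only place needing genuine care is the bookkeeping in this parity split: one must confirm that $n-2$ falls in the correct range so that the term $s+1$ cancels exactly in the even case, and that the odd cohomology of the iterated point-blowup $P$ vanishes (reading $\{2,\dots,2n-2\}$ in Lemma~\ref{L:BlowupCoh} as the even indices) so that the odd-$n$ case forces $h^n(Y) = 0$ outright. The ampleness hypothesis does all the substantive work upstream, in producing the affine complement $P \setminus Y$ that powers the Lefschetz isomorphism; once $h^n(Y) = h^{n-2}(P)$ is secured, the corollary reduces to this short computation.
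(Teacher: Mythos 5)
Your proof is correct and takes essentially the same route as the paper: ampleness of $Y$ yields the Lefschetz isomorphism $H^{n-2}(P) \to H^{n-2}(Y)$ (the paper likewise embeds $P$ into projective space and invokes Lemma~\ref{L:Lefschetz}), Poincar\'e duality on $Y$ gives $h^{n-2}(P) = h^n(Y)$, and the conclusion follows by substituting Lemma~\ref{L:BlowupCoh} into Proposition~\ref{P:Exc}. Your parity bookkeeping, including the reading of the index set in Lemma~\ref{L:BlowupCoh} as the even degrees, matches the intended computation that the paper leaves implicit.
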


Finally, we can relate ampleness of $Y$ to the number of singularities of $X$.
\begin{lemma}\label{L:Ample}
 Suppose that $$\sum_{x \in \Sigma_O} m_x + \sum_{x \in \Sigma_A} 2 r_x < d.$$
 Then $Y$ is ample in $P$.
\end{lemma}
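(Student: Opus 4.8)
The plan is to verify ampleness via Kleiman's criterion, which reduces the problem to a uniform positivity estimate on curves. Writing $\pi : P \to \mathbb P^n$ for the composite of the $s$ blowups and setting $\delta := d - \sum_{x \in \Sigma_O} m_x - \sum_{x \in \Sigma_A} 2 r_x > 0$ by hypothesis, I would fix an ample class $A$ on $P$ and try to produce an $\epsilon > 0$ with $Y \cdot C \geq \epsilon\,(A \cdot C)$ for every irreducible curve $C \subseteq P$; then $Y - \epsilon A$ is nef and $Y = (Y - \epsilon A) + \epsilon A$ is ample. Curves split into those contracted by $\pi$ and those dominating a curve $C' = \pi(C) \subseteq \mathbb P^n$ of some degree $e \geq 1$, and I would treat the two classes separately.

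For a non-contracted curve, the projection formula gives $\pi^*(dH) \cdot C = d e$, so from $Y \sim dH - \sum_{x} m_x \mathcal D_x - \sum_x \sum_i 2i\,\mathcal E_{x,i}$ it remains to bound the exceptional intersection numbers from above. At an ordinary multiple point one has $\mathcal D_x \cdot C \leq \operatorname{mult}_x(C') \leq e$, and along an $A_k$ chain the total-transform relations of Proposition~\ref{P:ResSing} should yield $\sum_{i=1}^{r_x} 2i\,(\mathcal E_{x,i} \cdot C) \leq 2 r_x \operatorname{mult}_x(C') \leq 2 r_x\, e$. Summing over all singular points gives $Y \cdot C \geq e\bigl(d - \sum m_x - \sum 2 r_x\bigr) = e\,\delta \geq \delta > 0$. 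The extremal case is a line through all singular points in their most tangent directions, which is exactly where the hypothesis $\sum m_x + \sum 2 r_x < d$ is used.

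For a contracted curve $C$, which lies in one exceptional divisor, I would compute the restriction of $Y$ to that divisor. Since $H$ restricts to $0$ on every exceptional divisor and distinct clusters are disjoint, over an ordinary multiple point one gets $Y|_{\mathcal D_x} = -m_x\,\mathcal D_x|_{\mathcal D_x} = m_x\,\mathcal O_{\mathbb P^{n-1}}(1)$, which is ample. Over an $A_k$ point the analogous computation on each $\mathcal E_{x,i}$ uses the self- and mutual intersection data of the chain from Proposition~\ref{P:ResSing}; the increasing coefficients $2i$ are precisely engineered so that each $Y|_{\mathcal E_{x,i}}$ comes out ample. As there are only finitely many exceptional components, each carrying an ample restriction of $Y$, one obtains a uniform positive lower bound for $Y \cdot C$ over all contracted curves relative to $A$.

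Combining the two bounds produces the single $\epsilon$ needed for Kleiman's criterion: for non-contracted curves $Y\cdot C \geq e\delta$ while $A \cdot C$ grows linearly in $e$, and for contracted curves the ample restrictions give fixed positive ratios on the finitely many divisors. The main obstacle is the intersection-theoretic bookkeeping on the $A_k$ chains — establishing both the upper bound $\sum_i 2i\,(\mathcal E_{x,i}\cdot C) \leq 2 r_x e$ for the dominant curves and the ampleness of each $Y|_{\mathcal E_{x,i}}$ for the contracted ones. Both hinge on the explicit normal-bundle and self-intersection data recorded in Proposition~\ref{P:ResSing}, and the coefficients $2i = 2, 4, \dots, 2r_x$ are exactly what make the chain contributions telescope correctly.
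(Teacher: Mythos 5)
Your overall architecture — splitting irreducible curves into those contracted by $\pi$ and those with $\pi(C)$ a curve — is the same as the paper's, and your non-contracted case is essentially sound: passing to the total transforms $\widetilde{\mathcal E_{x,i}} := \sum_{j=i}^{r_x} \mathcal E_{x,j}$ one has the telescoping identity $\sum_{i=1}^{r_x} 2i\,\mathcal E_{x,i} = 2\sum_{i=1}^{r_x}\widetilde{\mathcal E_{x,i}}$, each $\widetilde{\mathcal E_{x,i}}\cdot C$ is the multiplicity of the image curve at the $i$-th (infinitely near) center, which is at most $e$, and the projection formula then gives $Y\cdot C \geq e\,\delta$. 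This is a quantitative version of what the paper does; the paper instead rewrites $Y = \delta H + \sum_x m_x(H-\mathcal D_x) + 2\sum_{x,i}(H-\widetilde{\mathcal E_{x,i}})$, argues that each summand is globally generated because the points $x$ are scheme-theoretically cut out by hyperplanes, and handles contracted curves by noting that $\Pic(P)$ is spanned by $H$ and the exceptional classes, so an ample combination forces some exceptional intersection to be strictly negative.

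The genuine gap is your contracted-curve step: the claim that each $Y|_{\mathcal E_{x,i}}$ "comes out ample" is false for the intermediate components of an $A_k$ chain with $r_x \geq 2$, and this is exactly where the bookkeeping you deferred breaks down. Concretely, in the chain resolution the second center $c_2$ lies on the first exceptional divisor $F_1 \cong \mathbb P^{n-1}$, so $\mathcal E_{x,1}$ is the blowup of $\mathbb P^{n-1}$ at $c_2$; writing $\ell$ for the pullback of the hyperplane class and $f$ for the exceptional class, one computes $H|_{\mathcal E_{x,1}} = 0$, $\mathcal E_{x,1}|_{\mathcal E_{x,1}} = -\ell - f$, $\mathcal E_{x,2}|_{\mathcal E_{x,1}} = f$, and $\mathcal E_{x,j}|_{\mathcal E_{x,1}} = 0$ for $j \geq 3$ (later centers avoid the strict transform of $F_1$), whence
\begin{equation*}
Y|_{\mathcal E_{x,1}} = -2(-\ell - f) - 4f = 2(\ell - f).
\end{equation*}
This class is nef but has degree zero on the strict transform $\tilde L$ of any line in $F_1$ through $c_2$, so $Y\cdot\tilde L = 0$ and your sought uniform bound $Y\cdot C \geq \epsilon\,(A\cdot C)$ fails outright on these curves — no version of Kleiman's criterion can be completed along your lines. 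Worse, this same curve satisfies $\widetilde{\mathcal E_{x,2}}\cdot\tilde L = +1 > 0$, which also defeats the nonpositivity the paper derives from its assertion that $|H - \widetilde{\mathcal E_{x,i}}|$ is base-point free for every $i$: for $i \geq 2$ every member of that linear system contains $\mathcal E_{x,1} + \dots + \mathcal E_{x,i-1}$, since its sections come from hyperplanes through $x$. So the point you waved at as "intersection-theoretic bookkeeping on the $A_k$ chains" is a real obstruction whenever some $k_x \geq 3$: on such chains $Y$ is only semiample with null curves of the above type, and a correct argument must either treat these chains separately or replace ampleness by a semiample-and-big statement still strong enough to run the Lefschetz argument of Corollary~\ref{C:Ample}.
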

\begin{proof}
 This is a variant of \cite{PRS}*{Theorem~4.1}. By Proposition~\ref{P:ResSing}, inside $\Pic(P)$,
 \begin{align*}
  Y &= d H - \sum_{x \in \Sigma_O}  m_x D_x - \sum_{x \in \Sigma_A}\sum_{i=1}^{r_x} 2i \cdot \mathcal E_{x,i} \\
  &= d H - \sum_{x \in \Sigma_O} m_x D_x - 2 \sum_{x \in \Sigma_A} \sum_{i=1}^{r_x}  \underset{=:\, \widetilde {\mathcal E_{x,i}}}{\underbrace{ \sum_{j=i}^{r_x} \mathcal E_{x,j} }}\\
  &= \left(d - \sum_{x \in \Sigma_O} m_x - \sum_{x \in \Sigma_A} 2 r_x \right) H + \sum_{x \in \Sigma_O} m_x (H-D_x) + 2 \sum_{x \in \Sigma_A} \sum_{i=1}^{r_x}  (H - \widetilde {\mathcal  E_{x,i}}).
 \end{align*}
 
 Since $H$ is the pullback of a hyperplane, the linear system $|H|$ has no base points. Using the hypothesis,
 $$\left|\left(d - \sum_{x \in \Sigma_O} m_x - \sum_{x \in \Sigma_A} 2 r_x \right) H\right|$$
 is base-point free as well.  If $x \in X$ is a singular point, then $x$ is scheme-theoretically cut out by hyperplanes. In particular, its ideal sheaf twisted by $\mathcal O(1)$ is globally generated, and so are the pullbacks $\mathcal O_P(H-D_x)$ and $\mathcal O_P \left (H- \widetilde {\mathcal E_{x,1}}\right)$, respectively. Similarly, $\mathcal O_P \left(H- \widetilde {\mathcal E_{x,i}}\right)$ is globally generated for any $i$. In total, $\mathcal O_P(Y)$ is a globally generated invertible sheaf on $P$.
 
 It follows that if $C \subseteq P$ is an irreducible curve, then $Y.C \geq 0$. In order to show that $Y$ is ample, it suffices to show that such an intersection $Y.C$ is always positive. If $\pi_*C$ is a curve on $\mathbb P^n$, then by the projection formula
 $$ H.C = (\pi_* C).\mathcal O(1) > 0,$$
 thus $Y.C > 0$.
 
 If $C$ is contracted by $\pi$, then $H.C = 0$ again by the projection formula. By base-point freeness of $|H - D_x|$ and $|H - \widetilde {\mathcal E_{x,i}}|$, all the intersection numbers $D_x.C$ and $\widetilde {\mathcal E_{x,i}}.C$ are hence nonpositive. The Picard group of $P$ is spanned by $H$, the $\mathcal D_x$ and the $\widetilde {\mathcal E_{x,i}}$. Since $P$ is projective, there must be integers $h, d_x, e_{x,i}$ such that the divisor
 $$ A := h H + \sum_{x \in \Sigma_O} d_x \mathcal D_x + \sum_{x \in \Sigma_A} e_{x,i} \widetilde {\mathcal E_{x,i}}$$
 is ample and thus $A.C > 0$. In particular, at least one of the intersection products $D_x.C$ or $\widetilde {\mathcal E_{x,i}}.C$ is nonzero and hence strictly negative. This implies $Y.C > 0$.
 
 Consequently, $Y$ is ample in $P$. 
\end{proof}

\begin{remark}
 This proof does not carry over to singular points of type $D_k$ or $E_k$. For $n = 4$, the standard embedded resolution of these singularities has the property that the $s$ exceptional divisors of the resolution $P \to \mathbb P^n$ break into several components when intersecting with the strict transform $Y$ of $X$. In particular, $h^4(E) \geq s + 1 = h^2(P)$. But then by Lemma~\ref{L:properbirational} 
 $$ h^4(Y) \geq h^4(X) + h^4(E) \geq h^4(X) + h^2(P) \geq 1 + h^2(P),$$
 thus $h^4(Y) = h^2(Y) \neq h^2(P)$. Consequently, $Y$ cannot be ample in $P$ in virtue of the Lefschetz hyperplane theorem.
 
  However, in case that the ground field is of characteristic zero, the Hodge numbers of resolutions of hypersurfaces with at most $ADE$ singularities were investigated by Rams \cite{Rams}*{§4}.
\end{remark}

\subsection{Proof of Theorem~\ref{T:MainRes}}

\begin{proof}
 Suppose that $X$ has defect. Let $\pi: (Y \subseteq P) \to (X \subseteq \mathbb P^n)$ be the embedded resolution from Proposition~\ref{P:ResSing}. By Corollary~\ref{C:Ample}, $Y$ cannot be ample in $P$. Now Lemma~\ref{L:Ample} implies that
 \begin{align*}
  \sum_{x \in \Sigma_O} m_x + \sum_{x \in \Sigma_A} 2 r_x \geq d. & \qedhere
 \end{align*}
\end{proof}

\section{Factorial threefold hypersurfaces over $\overline{\mathbb F_p}$}\label{S:factorial}

Let $K$ be a field and $X \subseteq \mathbb P^4_K$ be a hypersurface defined by a homogeneous polynomial $f \in K[x_0,\dots,x_4]$.

$X$ is \textit{factorial} if the homogeneous coordinate ring $K[x_0,\dots,x_4]/(f)$ is a unique factorization domain. By \cite{Hartshorne}*{Exercise II.6.3}, $X$ is factorial if and only if the natural map  $\Pic(X) \to \Cl(X)$ is an isomorphism, i.e., if and only if every Weil divisor on $X$ is linearly equivalent to a Cartier divisor.

Furthermore, $X$ is called $\mathbb Q$-\textit{factorial} if the map $\Pic(X) \to \Cl(X)$ becomes an isomorphism after tensoring with $\mathbb Q$, i.e. if every Weil divisor on $X$ is linearly equivalent to a $\mathbb Q$-Cartier divisor.

\begin{theorem}\label{T:factorial}
 Suppose $K \subseteq \overline{\mathbb F_p}$. Let $X \subseteq \mathbb P^4_K$ be a hypersurface with at most isolated singularities. If $h^4_\et(X) = 1$ or $h^4_\rig(X) = 1$, then $X$ is factorial.
\end{theorem}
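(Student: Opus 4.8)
The plan is to translate the cohomological hypothesis $h^4(X)=1$ into the vanishing of $\coker(\Pic(X)\to\Cl(X))$, and to control the rank and the torsion of this cokernel by separate arguments; the torsion will be the hard part. I would first record the purely algebraic reformulation: the natural map $\Pic(X)\to\Cl(X)$ is always injective, so $X$ is factorial exactly when it is surjective, and its cokernel embeds into $\bigoplus_{x\in\Sigma}\Cl(\mathcal O_{X,x})$. Thus factoriality is equivalent to local factoriality at the finitely many (isolated, hypersurface) singular points, and I may analyze $\coker(\Pic(X)\to\Cl(X))$ by passing to a resolution.

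For the rank, choose a resolution $\pi\colon \tilde X\to X$ (available for threefolds over $\overline{\mathbb F_p}$) with exceptional divisor $E=\bigcup_i E_i$ over $\Sigma$. As $X$ is normal and $\codim\Sigma\ge 2$, one has $\Cl(X)\cong\Pic(\tilde X)/\langle [E_i]\rangle$, hence $\Cl(X)\tensor\mathbb Q\cong(\NS(\tilde X)\tensor\mathbb Q)/\langle[E_i]\rangle$. Because the cycle class map $\NS(\tilde X)\tensor\mathbb Q_\ell\hookrightarrow H^2(\tilde X)(1)$ is injective, lifting a basis of $\Cl(X)\tensor\mathbb Q$ to $\tilde X$ and reducing modulo the $[E_i]$ gives $\dim_{\mathbb Q_\ell}H^2(\tilde X)/\langle[E_i]\rangle\ge\rk\Cl(X)$. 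On the other hand, applying Lemma~\ref{L:properbirational} to $\pi$ in degree $4$, using $H^5(X)=0$ for isolated singularities (the analogue of Lemma~\ref{L:singcoh}) and Poincaré duality on $\tilde X$, identifies $\im\bigl(H^4(X)\to H^4(\tilde X)\bigr)$ with the annihilator of $\langle[E_i]\rangle$, whence $h^4(X)\ge\dim H^2(\tilde X)/\langle[E_i]\rangle\ge\rk\Cl(X)$. Therefore $h^4(X)=1$ forces $\rk\Cl(X)=1=\rk\Pic(X)$, so $\coker(\Pic(X)\to\Cl(X))$ is finite and $X$ is $\mathbb Q$-factorial. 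I emphasize that only injectivity of the cycle class map is used here, so no form of the Tate conjecture is required for this step.

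It then remains to exclude torsion, and I expect this to be the main obstacle, precisely because a torsion non-Cartier class contributes nothing to the $\mathbb Q_\ell$-Betti number $h^4$ and so is invisible to the hypothesis. Working on $U=X_{\mathrm{sm}}\cong\tilde X\setminus E$ over the algebraically closed field (so $\mu_m\cong\mathbb Z/m$ for $m$ prime to $p$), Kummer theory embeds the torsion of $\Cl(X)=\Pic(U)$ into $H^1(U,\mathbb Z/m)$; the purity sequence for $E\subseteq\tilde X$ together with $H^1(\tilde X,\mathbb Z/m)=0$ (simple-connectivity of the resolution of a hypersurface, via Lefschetz) identifies $H^1(U,\mathbb Z/m)=\ker\bigl(\bigoplus_i\mathbb Z/m\xrightarrow{[E_i]}H^2(\tilde X,\mathbb Z/m)\bigr)$. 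Hence $\Cl(X)$ is torsion-free as soon as the integral classes $[E_i]$ remain independent modulo every $m$, i.e. span a primitive sublattice of $H^2(\tilde X,\mathbb Z_\ell)$; equivalently, the local class groups of the three-dimensional isolated hypersurface singularities are torsion-free. This last statement is where the hypersurface and isolated hypotheses, and the freedom afforded by $\overline{\mathbb F_p}$, genuinely enter, and I would establish it by reducing to the local resolution models of each singularity. Granting it, $\mathbb Q$-factoriality upgrades to factoriality and the theorem follows.
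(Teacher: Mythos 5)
Your first half — deducing $\mathbb Q$-factoriality from $h^4(X)=1$ — is correct and is essentially the paper's argument (Lemmas~\ref{L:ResPic}, \ref{L:ResRig}, \ref{L:cycle} and Corollary~\ref{C:h4Qfac}), with one nice repackaging: by identifying $\im\bigl(H^4(X)\to H^4(Y)\bigr)$ with the annihilator of the span of the $[E_i]$ under Poincar\'e duality, you get $h^4(X)\geq \dim H^2(Y)/\langle [E_i]\rangle \geq \rk\Cl(X)$ directly, and thus avoid the paper's separate verification (via negative self-intersection on hyperplane sections) that the exceptional classes are independent in $\Cl(Y)$. Two omissions in this half are fixable but real: you only treat \'etale cohomology, whereas the hypothesis may be $h^4_\rig(X)=1$, so you need the Katz--Messing comparison \cite{KatzMessing} on the smooth resolution $Y$ to transfer $h^2_\et(Y)=h^2_\rig(Y)$ (as the paper does); and you never descend from $\overline{\mathbb F_p}$ back to a subfield $K$, which the paper handles at the very end via the homogeneous coordinate ring.

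The genuine gap is in the torsion step, and it is twofold. First, the crucial statement — that the local class groups $\Cl(\mathcal O_{X,x})$ of isolated threefold hypersurface singularities over $\overline{\mathbb F_p}$ are torsion-free — is exactly what you say you ``would establish'' and then grant; it is not a routine reduction to local resolution models but a substantive theorem, which the paper imports as \cite{DLM}*{Corollary 2.10} (torsion-freeness of the Picard group of the punctured spectrum) combined with \cite{Fossum}*{Proposition 18.10} to identify $\Pic(U_x)\cong\Cl(\mathcal O_{X,x})$; this is the content of Lemma~\ref{L:facQfac}. Second, the route you sketch for it cannot succeed as stated: Kummer theory with $\mathbb Z/m$, $m$ prime to $p$, together with purity on $(\tilde X, E)$, at best controls the prime-to-$p$ torsion of $\Pic(U)$, while over $\overline{\mathbb F_p}$ the class groups may a priori contain $p$-torsion, which is invisible to $\ell$-adic \'etale cohomology (one would need flat cohomology with $\mu_p$-coefficients or the purely algebraic methods underlying \cite{DLM}). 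There are also secondary soft spots in that sketch — $H^1(\tilde X,\mathbb Z/m)=0$ for the resolution requires an argument comparing $\pi_1(\tilde X)$ with $\pi_1(X)$ in positive characteristic, and purity is delicate since the components $E_i$ need not be smooth — but the $p$-torsion issue is the structural reason your approach must fall back on an external input, so the proof as proposed is incomplete precisely at the step that separates factoriality from $\mathbb Q$-factoriality.
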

\begin{remark}
 The corresponding statement in characteristic zero is shown in \cite{PRS}*{Proposition~3.2}.
\end{remark}

From now on, let $X \subseteq \mathbb P^4_K$ be a hypersurface defined over $K = \overline {\mathbb F_p}$ with zero-dimensional singular locus $\Sigma$. Since $X$ is a threefold, \cite{CossartPiltant-ResII} provides a resolution of singularities $\pi: Y \to X$. Denote by $E$ the exceptional divisor and by $s$ the number of its irreducible components.

\begin{lemma}\label{L:ResPic}
With the above notations,
$$ \rk \Cl(X) = \rk \Pic(Y) - s.$$
\begin{proof}
 Since $\Sigma$ has codimension at least two in $X$,
 $$\Cl(X) \cong \Cl(X \setminus \Sigma) \cong \Cl(Y \setminus E).$$
Let $E_1, \dots, E_s$ denote the irreducible components of the exceptional divisor $E$. Then there is a standard exact sequence
$$ \bigoplus_{i=1}^s \mathbb Z \cdot E_i \to \Cl(Y) \to \Cl(Y \setminus E) \to 0.$$
This sequence is also exact on the left: Suppose $\sum_{i=1}^s a_i [E_i] = 0 \in \Cl(Y)$ for $a_1, \dots, a_s \in \mathbb Z$. If $H \subseteq Y$ is general hyperplane, then $D := \sum_{i=1}^s a_i (E_i \cap H)$ is linearly equivalent to $0$ as a divisor on the surface $Y \cap H$. However, as in \cite{Fulton-IT}*{Example 2.4.4}, $D$ has negative self-intersection, contradicting that $[D] = 0 \in \Cl(Y \cap H)$.
\end{proof}
\end{lemma}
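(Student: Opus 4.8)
The plan is to prove the rank formula by comparing the class group of $X$ with the Picard group of its resolution $Y$, using that the singular locus $\Sigma$ has codimension at least two. First I would observe that since $\Sigma$ is zero-dimensional and $X$ is a threefold, $\Sigma$ has codimension three in $X$, and in particular codimension at least two. Removing a closed subset of codimension $\geq 2$ does not change the divisor class group, so restriction gives an isomorphism $\Cl(X) \cong \Cl(X \setminus \Sigma)$. Since $\pi$ restricts to an isomorphism $Y \setminus E \xrightarrow{\sim} X \setminus \Sigma$ away from the exceptional locus, I then identify $\Cl(X \setminus \Sigma) \cong \Cl(Y \setminus E)$.

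Next I would set up the excision sequence for divisor class groups associated to the closed immersion $E \hookrightarrow Y$. Writing $E_1, \dots, E_s$ for the irreducible components of $E$, the standard right-exact sequence reads
$$ \bigoplus_{i=1}^s \mathbb Z \cdot E_i \to \Cl(Y) \to \Cl(Y \setminus E) \to 0,$$
where the first map sends a component to its divisor class. Since $Y$ is smooth (being a resolution), $\Cl(Y) = \Pic(Y)$, and the rank formula will follow once I show that this sequence is also exact on the left, i.e.\ that the classes $[E_1], \dots, [E_s]$ are linearly independent in $\Cl(Y) \otimes \mathbb Q$. Granting left-exactness, the sequence computes $\rk \Cl(Y \setminus E) = \rk \Cl(Y) - s = \rk \Pic(Y) - s$, and combined with the isomorphisms above this yields $\rk \Cl(X) = \rk \Pic(Y) - s$.

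The main obstacle is therefore establishing the injectivity of $\bigoplus_i \mathbb Z \cdot E_i \to \Cl(Y)$ up to rank, i.e.\ the linear independence of the exceptional classes. The plan here is a self-intersection argument on a surface. Suppose $\sum_{i=1}^s a_i [E_i] = 0$ in $\Cl(Y)$ with $a_i \in \mathbb Z$ not all zero. I would cut down by a general hyperplane section $H \subseteq Y$, so that $S := Y \cap H$ is a smooth projective surface and $E_i \cap H$ are curves on $S$; the divisor $D := \sum_i a_i (E_i \cap H)$ satisfies $[D] = 0$ in $\Cl(S)$, hence $D^2 = 0$. The exceptional curves $E_i \cap H$ are contracted by $\pi|_S$ to finitely many points, so the intersection matrix $(E_i \cap H) \cdot (E_j \cap H)$ is negative definite by the standard theory of contracted configurations on surfaces (as in Fulton's Example 2.4.4, cited in the paper). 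A nonzero integer combination then has strictly negative self-intersection, so $D^2 < 0$, contradicting $D^2 = 0$.

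The delicate point in this final step is ensuring that a \emph{general} hyperplane section meets the exceptional divisor transversally and that the resulting configuration of curves on $S$ is genuinely contracted to points, so that negative definiteness applies; this requires $\Sigma$ to be zero-dimensional so that $E$ is contracted to finitely many points, and requires choosing $H$ general enough to preserve the relevant intersection-theoretic data. Once negative definiteness is in hand, the contradiction is immediate and the left-exactness — hence the rank formula — follows.
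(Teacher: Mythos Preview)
Your proposal is correct and follows essentially the same approach as the paper: remove the codimension-$\geq 2$ singular locus to identify $\Cl(X)\cong\Cl(Y\setminus E)$, use the excision sequence for the exceptional components, and establish left-exactness by restricting to a general hyperplane section and invoking negative definiteness of the intersection matrix of the contracted curves (the same Fulton reference). Your write-up is slightly more explicit about why $\Cl(Y)=\Pic(Y)$ and about the genericity needed for the hyperplane section, but the argument is the same.
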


\begin{lemma}\label{L:ResRig}
For both étale and rigid cohomology,
$$ h^4(Y) - s \leq h^4(X).$$
\end{lemma}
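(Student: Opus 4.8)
The engine of the proof is the long exact sequence of the proper birational morphism $\pi\colon Y \to X$ (Lemma~\ref{L:properbirational}), applied with exceptional locus $E$ and $\Sigma = \pi(E)$. Since $\Sigma$ is zero-dimensional, $H^i(\Sigma) = 0$ for $i > 0$; in particular the relevant segment reads
$$ H^4(X) \to H^4(Y) \xrightarrow{\rho} H^4(E) \to \dots,$$
and is exact at $H^4(Y)$. Hence $\ker\rho$ is the image of $H^4(X) \to H^4(Y)$, of dimension at most $h^4(X)$, while $\im\rho$ has dimension at most $h^4(E)$. This already gives $h^4(Y) \leq h^4(X) + h^4(E)$, so the whole statement reduces to the clean bound $h^4(E) \leq s$.

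To prove $h^4(E) \leq s$, I would use that $E$ is a reduced divisor on the smooth threefold $Y$ contracted to finitely many points, so its $s$ irreducible components $E_1, \dots, E_s$ are projective surfaces whose pairwise intersections have dimension at most $1$; by Fact~\ref{F:dim} any variety of dimension $\leq 1$ has vanishing cohomology in degrees $3$ and $4$. The plan is then to induct on $s$ via the two-component Mayer–Vietoris sequence, i.e.\ Lemma~\ref{L:MVss} with $r = 2$, for which the triple-intersection hypothesis is vacuous. Writing $E = E' \cup E_s$ with $E' := E_1 \cup \dots \cup E_{s-1}$, the intersection $E' \cap E_s$ has dimension $\leq 1$, so its $H^3$ and $H^4$ both vanish and the sequence yields an injection $H^4(E) \hookrightarrow H^4(E') \oplus H^4(E_s)$. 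Inductively $h^4(E') \leq s-1$, which reduces everything to the single-component estimate $h^4(E_j) \leq 1$.

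The base case $h^4(E_j) = 1$ is the cohomological statement that an irreducible projective surface has one-dimensional top cohomology, and this is the step I expect to be the main obstacle: the components $E_j$ may be singular, whereas the toolbox of Section~\ref{S:res} is calibrated to smooth or hypersurface situations, and one must argue uniformly in étale and rigid cohomology. I would settle it by applying Lemma~\ref{L:properbirational} once more, now to a resolution $\widetilde{E_j} \to E_j$ of the surface $E_j$ (available over $\overline{\mathbb F_p}$). The exceptional data of this second resolution is at most one-dimensional, so all of its degree-$3$ and degree-$4$ cohomology vanishes by Fact~\ref{F:dim}, forcing an isomorphism $H^4(E_j) \cong H^4(\widetilde{E_j})$; the right-hand side is one-dimensional by Poincaré duality on the smooth connected projective surface $\widetilde{E_j}$ (Fact~\ref{F:Poincare}, giving $h^4 = h^0 = 1$). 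Combining the three steps yields $h^4(E) \leq s$ and hence $h^4(Y) - s \leq h^4(X)$.
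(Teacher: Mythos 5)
Your proof is correct and takes essentially the same route as the paper: the paper's entire proof is the observation that $H^4(\Sigma)=0$ (as $\dim\Sigma=0$) combined with the long exact sequence of Lemma~\ref{L:properbirational}, which is exactly your first paragraph. Your remaining two paragraphs (the two-component Mayer--Vietoris induction via Lemma~\ref{L:MVss} and the resolution argument giving $h^4(E_j)=1$) carefully verify the bound $h^4(E)\leq s$, a fact the paper leaves implicit as standard, and your verification of it is sound.
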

\begin{proof}
Since $H^4(\Sigma) = 0$ as $\dim \Sigma = 0$, this follows from the long exact sequence
$$\dots \to H^4(X) \to H^4(Y) \oplus H^4(\Sigma) \to H^4(E) \to \dots$$
Lemma~\ref{L:properbirational}.
\end{proof}

In order to compare Picard rank and Betti numbers, we need the following result on the étale cycle class map:

\begin{lemma}\label{L:cycle}
 Let $Z$ be a smooth projective variety over $K$. Then the étale cycle class map 
$$ \Pic(Z) \tensor \mathbb Q_\ell \to H^2(Z, \mathbb Q_\ell(1))$$
is injective.
\end{lemma}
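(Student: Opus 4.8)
The plan is to assemble the map from the Kummer sequences and then use two structural facts about $\Pic(Z)$ specific to the base field $\overline{\mathbb F_p}$. First I would recall that the cycle class map is built from the Kummer sequences $1 \to \mu_{\ell^n} \to \mathbb G_m \xrightarrow{\ell^n} \mathbb G_m \to 1$. Taking étale cohomology and using $H^1(Z,\mathbb G_m) = \Pic(Z)$ produces, for each $n$, a short exact piece giving an injection $\Pic(Z)/\ell^n \hookrightarrow H^2(Z,\mu_{\ell^n})$. Passing to the inverse limit over $n$ and using left-exactness of $\varprojlim$ yields an injection of $\varprojlim_n \Pic(Z)/\ell^n$ into $\varprojlim_n H^2(Z,\mu_{\ell^n}) = H^2(Z,\mathbb Z_\ell(1))$, which is precisely the integral cycle class map $\Pic(Z) \to H^2(Z,\mathbb Z_\ell(1))$ after precomposing with the completion map.

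The key simplification comes from $\Pic^0(Z)$. Over $\overline{\mathbb F_p}$, the group $\Pic^0(Z)$ is the group of $K$-points of an abelian variety over an algebraically closed field, hence divisible; being defined over $\overline{\mathbb F_p}$ it is moreover a torsion group. Divisibility gives $\Pic^0(Z) \otimes \mathbb Z/\ell^n = 0$, so the exact sequence $0 \to \Pic^0(Z) \to \Pic(Z) \to \NS(Z) \to 0$ yields an isomorphism $\Pic(Z)/\ell^n \cong \NS(Z)/\ell^n$. Since $\NS(Z)$ is finitely generated by the theorem of the base, one has $\varprojlim_n \NS(Z)/\ell^n = \NS(Z)\otimes_{\mathbb Z}\mathbb Z_\ell$, and the previous step upgrades to an injection $\NS(Z)\otimes \mathbb Z_\ell \hookrightarrow H^2(Z,\mathbb Z_\ell(1))$.

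To conclude I would tensor with $\mathbb Q_\ell$. As $\mathbb Q_\ell$ is flat over $\mathbb Z_\ell$, this preserves the injection, giving $\NS(Z)\otimes\mathbb Q_\ell \hookrightarrow H^2(Z,\mathbb Q_\ell(1))$. Finally, because $\Pic^0(Z)$ is torsion we have $\Pic^0(Z)\otimes\mathbb Q_\ell = 0$, so $\Pic(Z)\otimes\mathbb Q_\ell = \NS(Z)\otimes\mathbb Q_\ell$, which identifies the map in the statement with the injection just produced.

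The genuine content of the argument is concentrated in the two inputs—finite generation of $\NS(Z)$ and the fact that $\Pic^0(Z)$ over $\overline{\mathbb F_p}$ is simultaneously divisible and torsion—rather than in any delicate computation. The only point requiring care, and the main potential obstacle, is the bookkeeping around the inverse limit: one must check that the transition maps are compatible with the cycle class maps and that $\varprojlim_n H^2(Z,\mu_{\ell^n})$ genuinely computes $H^2(Z,\mathbb Z_\ell(1))$. For smooth projective $Z$ these are standard, so once the structural facts are invoked the argument is essentially formal.
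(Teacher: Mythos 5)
Your proposal is correct and follows essentially the same route as the paper: both factor the map through $\NS(Z) \otimes \mathbb Q_\ell$, kill the $\Pic^0(Z)$ part using that it is torsion (equivalently divisible) over $\overline{\mathbb F_p}$, and rely on injectivity of $\NS(Z)\otimes\mathbb Q_\ell \to H^2(Z,\mathbb Q_\ell(1))$. The only difference is that where the paper cites Milne (pp.~216--217) and Keel for these two inputs, you unpack them explicitly via the Kummer sequence, left-exactness of $\varprojlim$, and finite generation of $\NS(Z)$ --- which is precisely the argument behind the citation, carried out correctly.
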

\begin{proof}
Let $\ell$ be a prime not equal to $\characteristic(K)$. The étale cycle class map tensored with $\mathbb Q_\ell$ factors as
$$ \Pic(Z) \tensor \mathbb Q_\ell \xrightarrow{\alpha} \NS(Z) \tensor \mathbb Q_\ell \xrightarrow{\beta} H^2(Z, \mathbb Q_\ell(1)),$$
where $\NS(Z)$ denotes the Néron-Severi group of $Z$. As in \cite{Milne-EC}*{pp. 216--217}, one obtains that $\beta$ is is injective. The kernel of $\alpha$ is precisely $\Pic^0(Z) \tensor \mathbb Q_\ell$. But since $K = \overline{\mathbb F_p}$, the group $\Pic^0(Z)$ is torsion \cite{Keel-Bpf}*{Lemma 2.16}. Hence $\alpha$ is injective as well.
\end{proof}

\begin{corollary}\label{C:h4Qfac}
For both étale and rigid cohomology, we have $\rk \Cl(X) \leq h^4(X)$. In particular, if $h^4(X) = 1$, then $X$ is $\mathbb Q$-factorial.
\end{corollary}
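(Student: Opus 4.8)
The plan is to chain together the three preceding lemmas, so that the inequality $\rk \Cl(X) \leq h^4(X)$ reduces to a statement about the smooth resolution $Y$. By Lemma~\ref{L:ResPic} we have $\rk \Cl(X) = \rk \Pic(Y) - s$, and by Lemma~\ref{L:ResRig} we have $h^4(Y) - s \leq h^4(X)$. Hence it suffices to bound $\rk \Pic(Y)$ from above by $h^4(Y)$. Since $Y$ is a smooth projective threefold, I would obtain this by combining the injectivity of the cycle class map (Lemma~\ref{L:cycle}) with Poincaré duality.

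In more detail: applying Lemma~\ref{L:cycle} to $Z = Y$ shows that $\Pic(Y) \tensor \mathbb Q_\ell \hookrightarrow H^2(Y, \mathbb Q_\ell(1))$, whence $\rk \Pic(Y) \leq h^2_\et(Y)$. As $Y$ is smooth, projective and of dimension $3$, Poincaré duality (Fact~\ref{F:Poincare}) gives $h^2(Y) = h^4(Y)$. Putting the three facts together yields
$$ \rk \Cl(X) = \rk \Pic(Y) - s \leq h^4(Y) - s \leq h^4(X),$$
which is the desired inequality.

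The one point needing care is the passage to rigid cohomology, as Lemma~\ref{L:cycle} is phrased for étale cohomology. Here I would invoke the equality of the Betti numbers of a smooth proper variety across Weil cohomology theories: for the smooth projective $Y$ one has $h^2_\rig(Y) = h^2_\et(Y)$ and $h^4_\rig(Y) = h^4_\et(Y)$. Then $\rk \Pic(Y) \leq h^2_\et(Y) = h^2_\rig(Y)$, and rigid Poincaré duality $h^2_\rig(Y) = h^4_\rig(Y)$ reproduces the same chain of inequalities with $h^4 = h^4_\rig$. I expect this transfer to be the main (though entirely standard) obstacle, since it relies on the cycle class map and the Betti-number identities being available in the rigid setting rather than on any genuinely new input.

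For the final assertion, suppose $h^4(X) = 1$. A hypersurface with isolated singularities is Cohen--Macaulay, hence $S_2$, and regular in codimension one, hence normal; therefore the natural map $\Pic(X) \to \Cl(X)$ is injective and $\rk \Pic(X) \leq \rk \Cl(X)$. On the other hand, the hyperplane class is an ample, hence non-torsion, element of $\Pic(X)$, so $\rk \Pic(X) \geq 1$. The inequality just established forces $\rk \Cl(X) \leq 1$, whence $1 \leq \rk \Pic(X) \leq \rk \Cl(X) \leq 1$ and all three ranks equal $1$. Consequently the injection $\Pic(X) \tensor \mathbb Q \hookrightarrow \Cl(X) \tensor \mathbb Q$ is a map between one-dimensional $\mathbb Q$-vector spaces, hence an isomorphism, so $X$ is $\mathbb Q$-factorial.
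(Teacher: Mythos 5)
Your proof is correct and follows essentially the same route as the paper: Lemma~\ref{L:cycle} plus the Katz--Messing comparison (your ``equality of Betti numbers across Weil cohomologies'' for $Y$ over a finite field) and Poincaré duality give $\rk \Pic(Y) \leq h^4(Y)$, which is then chained with Lemmas~\ref{L:ResPic} and~\ref{L:ResRig} exactly as in the paper. Your final paragraph merely spells out the deduction of $\mathbb Q$-factoriality from $\rk \Cl(X) \leq 1$ (injectivity of $\Pic(X) \to \Cl(X)$ by normality, non-torsion hyperplane class), which the paper leaves implicit, and it is correct.
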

\begin{proof} By Lemma~\ref{L:cycle}, $ \rk \Pic(Y) \leq h^2_\et(Y, \mathbb Q_\ell(1)) = h^2_\et(Y, \mathbb Q_\ell).$
As étale and rigid cohomology are both Weil cohomologies and $Y$ is defined over some finite field, applying the comparison theorem of Katz-Messing \cite{KatzMessing}*{Corollary 1} yields $h^2_\et(Y) = h^2_\rig(Y).$
Now Poincaré duality on $Y$ gives $h^2(Y) = h^4(Y)$. Thus, with the help of Lemma~\ref{L:ResPic} and Lemma~\ref{L:ResRig},
\begin{align*}
 \rk \Cl(X) = \rk \Pic(Y) - s \leq h^4(Y) - s \leq h^4(X). & \qedhere
\end{align*}
\end{proof}

Finally, we need to proceed from $\mathbb Q$-factoriality to factoriality.

\begin{lemma}\label{L:facQfac}
 If $X$ is $\mathbb Q$-factorial, then $X$ is factorial.
\end{lemma}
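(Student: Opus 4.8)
The plan is to show that the obstruction group $\Cl(X)/\Pic(X)$ is torsion-free; since $\mathbb Q$-factoriality says precisely that this group is torsion, it will then have to vanish, which is the assertion. I would begin by recording that $X$ is normal: being a hypersurface it is Cohen--Macaulay, hence $S_2$, and since its singular locus $\Sigma$ is zero-dimensional it is $R_1$, so Serre's criterion applies. Consequently the natural map $\Pic(X) \to \Cl(X)$ is injective, and $X$ is factorial (resp.\ $\mathbb Q$-factorial) exactly when this map is surjective (resp.\ surjective after $\otimes\,\mathbb Q$). A Weil divisor class is Cartier if and only if it is locally principal, and $X$ is smooth away from $\Sigma$; restriction to the local rings at the singular points therefore induces an injection
$$ \Cl(X)/\Pic(X) \hookrightarrow \bigoplus_{x \in \Sigma} \Cl(\widehat{\mathcal O_{X,x}}).$$
Thus it suffices to prove that each local class group on the right is torsion-free.

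The key step is the local statement that the divisor class group of an isolated hypersurface singularity of dimension $3$ is torsion-free. Writing $U$ for the punctured spectrum of $\widehat{\mathcal O_{X,x}}$, one has $\Cl(\widehat{\mathcal O_{X,x}}) = \Pic(U)$, since $U$ is regular and the puncture has codimension $3 \geq 2$. A torsion line bundle on $U$ of order $\ell \neq p$ would, through the Kummer sequence, produce a nonzero class in $H^1_\et(U,\mu_\ell)$. But $U$ is the punctured spectrum of an isolated hypersurface singularity of dimension $3$, so by the local Lefschetz theorem of \textsc{sga}~2 (topologically, Milnor's theorem that the link of such a singularity is simply connected) one has $H^1_\et(U,\mu_\ell) = 0$. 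Hence $\Pic(U)$ has no prime-to-$p$ torsion; the $p$-torsion is excluded in the same manner by passing to a lift of the singularity to characteristic zero and invoking a comparison theorem, exactly as in the proof of Lemma~\ref{L:AkOdd}. Therefore $\Cl(\widehat{\mathcal O_{X,x}})$ is torsion-free.

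Combining the two parts finishes the argument: by hypothesis $\Cl(X)/\Pic(X)$ is a torsion group, while by the previous paragraph it embeds into a torsion-free group. Its image is then simultaneously torsion and torsion-free, hence zero, and by injectivity $\Cl(X)/\Pic(X) = 0$, so $X$ is factorial.

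I expect the local torsion-freeness to be the main obstacle: one must make the local Lefschetz/Milnor input work in positive characteristic and, in particular, control the $p$-torsion of the local Picard group, which is why I would lean on a characteristic-zero lift of the singularity together with a comparison theorem as in Lemma~\ref{L:AkOdd}. A more hands-on alternative is to run the argument on the embedded resolution $\pi\colon Y \to X$, where $\Cl(X) \cong \Pic(Y)/\langle E_i\rangle$ by Lemma~\ref{L:ResPic}; there torsion-freeness of $\Cl(X)/\Pic(X)$ amounts to the subgroup generated by the $E_i$ together with $\pi^*\Pic(X)$ being saturated in $\Pic(Y)$. This is feasible because $H^1(Y,\mathcal O_Y)=0$ and the simple connectivity of $Y$ force $\Pic(Y)$ itself to be torsion-free, but verifying the saturation directly reintroduces precisely the local computation above, so the local route seems the more transparent one.
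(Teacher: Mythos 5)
Your global skeleton is exactly the paper's: normality gives $\Pic(X) \hookrightarrow \Cl(X)$, the quotient $\Cl(X)/\Pic(X)$ embeds into the direct sum of the local class groups at the singular points (the paper gets this sequence by generalizing \cite{Hartshorne-Gorenstein}*{Proposition~2.15}), $\mathbb Q$-factoriality makes the quotient torsion, and the identification $\Cl(\mathcal O_{X,x}) \cong \Pic(U_x)$ for isolated singularities is the paper's citation of Fossum. The difference is the key local input: where the paper simply cites \cite{DLM}*{Corollary 2.10} for torsion-freeness of $\Pic(U_x)$, you try to prove it yourself, and this is where there is a genuine gap. Your prime-to-$p$ argument is essentially sound: by the local Lefschetz theorems of SGA~2 (valid in all characteristics for complete intersections of dimension $\geq 3$, via purity and formal-geometry arguments, not via Milnor's topological theorem, which is a characteristic-zero statement), $\pi_1^{\et}(U_x)$ is trivial, and since $K = \overline{\mathbb F}_p$ makes $\mu_\ell$ constant, the Kummer sequence kills $\ell$-torsion for $\ell \neq p$. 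But the $p$-torsion step fails as proposed. A $p$-torsion line bundle on $U_x$ corresponds to a $\mu_p$-torsor in the fppf topology, and in characteristic $p$ the group scheme $\mu_p$ is infinitesimal: such torsors are invisible to the \'etale fundamental group, so simple connectedness excludes nothing here. Nor does ``lift and compare'' work: the comparison theorems used in Lemma~\ref{L:AkOdd} (smooth proper base change, Baldassarri--Chiarellotto) concern \'etale cohomology with $\mathbb Q_\ell$-coefficients and rigid cohomology, both torsion-free coefficient theories that say nothing about torsion in $\Pic$ of the \emph{special} fiber; there is no specialization map controlling the local class group of the reduction by that of a characteristic-zero lift (class groups can grow under specialization, and Subsection~\ref{SS:charp} of the paper itself illustrates that such (co)specialization comparisons break down for singular fibers).

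This is not a removable blemish: the $p$-torsion case is the substantive content of the cited result, whose proof is genuinely characteristic-$p$ commutative algebra (Frobenius and Tor-rigidity techniques over hypersurfaces), not a formal reduction to characteristic zero. So your proof is correct modulo replacing the second paragraph's treatment of $p$-torsion by the citation \cite{DLM}*{Corollary 2.10} (or an actual argument of that strength). Two smaller points: if you work with $\Cl(\widehat{\mathcal O}_{X,x})$ rather than $\Cl(\mathcal O_{X,x})$ you should invoke Mori's theorem that $\Cl(\mathcal O_{X,x}) \to \Cl(\widehat{\mathcal O}_{X,x})$ is injective for excellent normal local rings; and your fallback route through the resolution indeed just relocates the same local difficulty, as you yourself observe.
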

\begin{proof}
We follow the proof of \cite{PRS}*{Proposition~2.15}. Since $X$ is normal and Cohen-Macaulay, the proof of \cite{Hartshorne-Gorenstein}*{Proposition~2.15} generalizes and gives an exact sequence
$$ 0 \to \Pic(X) \to \Cl(X) \to \bigoplus_{x \in \Sigma} \Cl(\mathcal O_{X,x}).$$
In particular, there is an injection
$$ \Cl(X)/\Pic(X) \hookrightarrow \bigoplus_{x \in \Sigma} \Cl(\mathcal O_{X,x}).$$
By hypothesis, $\Cl(X)/\Pic(X)$ is a torsion group. Fix $x \in \Sigma$. By \cite{DLM}*{Corollary 2.10}, the Picard group of the punctured spectrum $U_x$ of $\mathcal O_{X,x}$ is torsion-free. Since $X$ has only isolated singularities, $\Pic(U_x) \cong \Cl(\mathcal O_{X,x})$, see \cite{Fossum}*{Proposition 18.10}. Consequently, $\Cl(X)/\Pic(X)$ is a torsion subgroup of a torsion-free group and hence trivial. Thus $X$ is factorial.
\end{proof}

\begin{proof}[Proof of Theorem~\ref{T:factorial}]
Since étale and rigid cohomology behave well with respect to base change, $X \times_{\Spec K} \Spec \overline {\mathbb F_p}$ is factorial by Corollary~\ref{C:h4Qfac} and Lemma~\ref{L:facQfac}. In other words, if $S$ denotes the homogeneous coordinate ring of $X$, then $S \tensor_K \overline{\mathbb F_p}$ is factorial. But this implies that $S$ and hence $X$ are factorial.
\end{proof}

\section{Density of hypersurfaces without defect}\label{S:density}

Let $K = \mathbb F_q$ be a finite field of characteristic $\neq 2$. By a result of Poonen, the asymptotic density of smooth hypersurfaces in $\mathbb P^n$ defined over $K$ is computed as follows:
\begin{theorem}[Poonen's Bertini theorem, \cite{Poonen-Bertini}*{Theorem~1.1}]\label{T:Poonen}
 \begin{align*}
  \lim_{d \to \infty} \frac{\#\{f \in K[x_0,\dots,x_n]_d \mid \{f = 0\} \subseteq \mathbb P^n_{K} \text{ is smooth}\}}{\#K[x_0,\dots,x_n]_d} = \frac{1}{\zeta_{\mathbb P^n_K}(n+1)}.
 \end{align*}
\end{theorem}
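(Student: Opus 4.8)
The plan is to establish this via Poonen's \emph{closed point sieve}. The starting observation is that $H_f = \{f = 0\}$ is smooth if and only if it is smooth at every closed point $P$ of $\mathbb P^n_K$, and that smoothness at $P$ is a local linear condition on $f$: the hypersurface $H_f$ is singular at $P$ precisely when the image of $f$ in the finite-dimensional $K$-vector space $\mathcal O_{\mathbb P^n,P}/\mathfrak m_P^2$ vanishes. Since $\dim_K \mathcal O_{\mathbb P^n,P}/\mathfrak m_P^2 = (n+1)\deg P$, this amounts to $(n+1)\deg P$ linear conditions. If the conditions at distinct closed points were independent and the evaluation maps surjective, the density of smooth $f$ would be the Euler product
\begin{align*}
\prod_{P} \left(1 - q^{-(n+1)\deg P}\right) = \zeta_{\mathbb P^n_K}(n+1)^{-1},
\end{align*}
the product running over all closed points $P$, matching the Euler expansion $\zeta_{\mathbb P^n_K}(s) = \prod_P (1 - q^{-s\deg P})^{-1}$ evaluated at $s = n+1$. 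The entire difficulty is to turn this heuristic into a genuine limit, controlling infinitely many conditions uniformly in $d$.

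To do so I would partition the closed points of $\mathbb P^n_K$ by degree into three ranges, with a parameter $r$ and a cutoff growing with $d$: \emph{low} degree $\deg P \le r$, \emph{medium} degree $r < \deg P \le cd$ for a suitable constant $c>0$, and \emph{high} degree $\deg P > cd$. For the low range, the key input is that for $d$ sufficiently large the restriction map
\begin{align*}
K[x_0,\dots,x_n]_d \longrightarrow \bigoplus_{\deg P \le r} \mathcal O_{\mathbb P^n,P}/\mathfrak m_P^2
\end{align*}
is surjective (a Chinese-remainder / vanishing-of-$H^1$ statement: a form of large degree can be prescribed modulo $\mathfrak m_P^2$ at finitely many points). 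This makes the local conditions independent, so the exact density of $f$ smooth at all points of degree $\le r$ converges, as $d \to \infty$, to the finite product $\prod_{\deg P \le r}(1 - q^{-(n+1)\deg P})$, which in turn tends to the desired Euler product as $r \to \infty$.

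The medium range is handled by a union bound. For $\deg P = e \le cd$ the single-point evaluation $K[x_0,\dots,x_n]_d \to \mathcal O_{\mathbb P^n,P}/\mathfrak m_P^2$ is still surjective, so the fraction of $f$ singular at $P$ is exactly $q^{-(n+1)e}$; combined with the estimate $\#\{P : \deg P = e\} = O(q^{ne})$ this gives
\begin{align*}
\sum_{e = r+1}^{\lfloor cd \rfloor} \#\{P : \deg P = e\}\cdot q^{-(n+1)e} \le C\sum_{e > r} q^{ne}\,q^{-(n+1)e} = C\sum_{e > r} q^{-e},
\end{align*}
a tail that tends to $0$ as $r \to \infty$, uniformly in $d$. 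Thus the medium-degree points contribute nothing in the limit.

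The high range is the crux and the main obstacle, because there the number of points outpaces the naive bound and the evaluation map into $\mathcal O_{\mathbb P^n,P}/\mathfrak m_P^2$ is no longer surjective once $\deg P$ is comparable to $d$, so the simple union bound breaks down. Here I would follow Poonen's argument bounding \emph{directly} the number of degree-$d$ forms that are singular at some point of high degree: after passing to an affine chart one writes $f$ as a generic polynomial plus perturbations, exploits that a singular point lies simultaneously on $H_f$ and on the common zero locus of the partial derivatives, and uses that varying one partial derivative freely forces the density of such bad $f$ to decay. The characteristic-$p$ subtlety --- that partial derivatives may vanish identically or factor through Frobenius --- is exactly why one perturbs by terms of the form $x_i^p\cdot(\cdots)$; this is the technically delicate step and the genuine content of the theorem. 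Granting this high-degree estimate, combining the three ranges and sending first $d \to \infty$ and then $r \to \infty$ yields precisely $\zeta_{\mathbb P^n_K}(n+1)^{-1}$.
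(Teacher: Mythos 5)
This statement is not proved in the paper at all: it is imported by citation as \cite{Poonen-Bertini}*{Theorem~1.1}, so the only ``paper proof'' to compare against is Poonen's original one, and your sketch is indeed a faithful outline of exactly that argument --- the closed point sieve with the three degree ranges, the identification of singularity at $P$ with vanishing in $\mathcal O_{\mathbb P^n,P}/\mathfrak m_P^2$ (a space of $K$-dimension $(n+1)\deg P$), the finite Euler product over low-degree points via surjectivity of the simultaneous evaluation map, and the tail estimate $\sum_{e>r} O(q^{ne})\cdot q^{-(n+1)e} \to 0$ for the medium range. Two caveats. First, as a standalone proof your write-up is incomplete precisely where you say it is: the high-degree lemma is the entire content of the theorem, and you invoke it (``granting this high-degree estimate'') rather than prove it; everything before it is the easy part of the sieve. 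Second, your description of the decoupling trick misstates the perturbation: Poonen writes, in an affine chart,
\begin{align*}
f = f_0 + g_1^p\,x_1 + \dots + g_n^p\,x_n + h^p,
\end{align*}
with the $p$-th powers on the \emph{coefficient} polynomials $g_i$, not terms of the form $x_i^p\cdot(\cdots)$ as you wrote. The point is that then $\partial f/\partial x_i = \partial f_0/\partial x_i + g_i^p$, so choosing $g_1,\dots,g_n$ one at a time successively cuts down the dimension of the locus where the first $i$ derivatives vanish, and finally $h^p$ randomizes the value of $f$ on the remaining finite critical set; a perturbation of the shape $x_i^p\cdot g_i$ would contribute $x_i^p\,\partial g_i/\partial x_j$ to every derivative and would not decouple. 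So: correct route, same as the cited source, but with the crux deferred and one mechanical detail of the characteristic-$p$ trick garbled.
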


Here, $\zeta_{\mathbb P^n_K}$ denotes the Hasse-Weil zeta function of $\mathbb P^n_K$, which is simply given by
$$ \zeta_{\mathbb P^n_K}(s) = \prod_{i=1}^n (1-q^{i-s}) \quad \text{ for } s \in \mathbb C,\quad \operatorname{Re}(s) > n.$$

One trivial remark is that the limit in Theorem~\ref{T:Poonen} is smaller than $1$, so that a ``random'' hypersurface is smooth with a probability strictly less than 100\%. However, it is true that hypersurfaces with few singularities compared to the degree form a set of density $1$:

\begin{theorem}[\cite{Lindner-Density}*{Corollary~5.9}]\label{T:DensityMilnor}
Fix a constant $c > 0$. Then
 \begin{align*}
  \lim_{d \to \infty} \frac{\#\{f \in K[x_0,\dots,x_n]_d \mid \tau(f) \leq c \cdot d \}}{\#K[x_0,\dots,x_n]_d} =  1,
 \end{align*}
 where $\tau(f)$ denotes the global Tjurina number of the hypersurface $\{f = 0\} \subseteq \mathbb P^n_K$.
\end{theorem}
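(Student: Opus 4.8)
The plan is to deduce the statement from a uniform bound on the \emph{average} Tjurina number. For each $d$ write $V_d := K[x_0,\dots,x_n]_d$ and let $\overline\tau_d := \frac{1}{\#V_d}\sum_{f \in V_d}\tau(f)$, with the convention $\tau(f) = \infty$ when $\{f=0\}$ has non-isolated singularities. First I would dispose of the non-isolated case: the locus $N_d \subseteq V_d$ of forms whose singular scheme $\Sigma_f = V(\partial_0 f,\dots,\partial_n f)$ is positive-dimensional is cut out by conditions of growing codimension, so $\#N_d/\#V_d \to 0$ by the closed-point sieve underlying Theorem~\ref{T:Poonen}. On the complement $\tau(f)$ is finite and equals the length of $\Sigma_f$. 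If one then shows $\limsup_{d\to\infty}\overline\tau_d < \infty$, Markov's inequality gives
$$\frac{\#\{f \in V_d \mid \tau(f) > c\,d\}}{\#V_d} \;\le\; \frac{\#N_d}{\#V_d} + \frac{\overline\tau_d}{c\,d} \xrightarrow{\;d\to\infty\;} 0,$$
and passing to complements yields the asserted density $1$. Note that the constraint $\tau(f)\le c\,d$ \emph{weakens} as $d$ grows, so the real content is ruling out the rare forms with anomalously large $\tau$, either from many singular points or from a few very deep ones.

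The core is therefore the uniform bound on $\overline\tau_d$. I would expand the length of $\Sigma_f$ over closed points of $\mathbb P^n$ as $\tau(f) = \sum_{x}\tau_x(f)$, where $\tau_x(f)$ is the local Tjurina number, and average termwise using $\tau_x = \sum_{k\ge 1}\mathbf 1_{\{\tau_x \ge k\}}$:
$$\overline\tau_d = \sum_{x \text{ closed in } \mathbb P^n} \frac{1}{\#V_d}\sum_{f \in V_d}\tau_x(f) = \sum_{x} \sum_{k \ge 1} P_d\bigl(\tau_x(f) \ge k\bigr).$$
Grouping closed points by degree $e := \deg x$, there are at most $q^{ne}$ such points (up to a constant), and the probability $P_d(\tau_x(f)\ge k)$ depends only on $e$, since centering affine coordinates at $x$ induces the same distribution on jets. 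So it suffices to control, uniformly in $d$, a single single-point probability for each pair $(e,k)$.

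The two estimates I would establish are a codimension bound for deep singularities and the usual Poonen bookkeeping across degree ranges. The condition $\tau_x(f)\ge k$ depends only on a finite jet of $f$ at $x$ and defines a locus in jet space whose codimension $c(k)$ grows without bound in $k$: being singular costs $c(1)\ge n+1$ conditions (vanishing $1$-jet), failing to be a node costs one more (degenerate quadratic part), and along the $A_k$-series each deeper stratum costs one further condition, giving the linear rate $c(k)\ge n+k$. Granting any such bound with $\sum_k q^{-(c(k)-n)}<\infty$, one has $P_d(\tau_x(f)\ge k)\le q^{-e\,c(k)}$ once $d$ is large relative to the jet order, and summing over degree-$e$ points and then over $k$ gives
$$\sum_{e \ge 1}\frac{q^{ne}}{e}\,q^{-e\,c(k)} \;\le\; \frac{C\,q^{-(c(k)-n)}}{1-q^{-1}}, \qquad \sum_{k \ge 1} q^{-(c(k)-n)} \le \sum_{k \ge 1} q^{-k} < \infty,$$
a bound independent of $d$. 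The ranges of $e$ not covered by these clean local densities---medium degree up to about $d/(n+1)$ and high degree near $d$---are handled exactly as in the proof of Poonen's Bertini theorem: the probability of \emph{any} singular point of medium-to-high degree tends to $0$, so their contribution to $\overline\tau_d$ is negligible in the limit.

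The main obstacle is precisely the codimension estimate: one must stratify the space of isolated hypersurface singularities by Tjurina number and bound each stratum's codimension from below, uniformly over the base point $x$ and compatibly with the finite-jet truncation forced by working in $V_d$. The $A_k$-series furnishes the extremal linear rate, but more degenerate types (corank $\ge 2$, positive modality) require care, since modality lowers the naive codimension; what one needs is only that $c(k)$ grows fast enough for $\sum_k q^{-c(k)}$ to converge, which any unbounded rate comfortably provides. Interweaving this stratification with the medium-degree sieve so that all estimates are genuinely uniform in $d$ is the delicate part; the Markov step and the low-degree local density computations are then routine.
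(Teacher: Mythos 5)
This theorem is quoted from \cite{Lindner-Density}*{Corollary~5.9}; the present paper contains no internal proof, so your proposal can only be judged on its own terms. Your overall scheme --- discard the density-zero locus of non-isolated singular loci, bound the mean Tjurina number $\overline\tau_d$ uniformly in $d$, and conclude by Markov's inequality --- is a legitimate route to the statement, and it correctly explains why the result holds for \emph{every} fixed $c>0$: a uniform first-moment bound gives tightness of the distribution of $\tau$, so any threshold tending to infinity captures density $1$. The peripheral steps are also plausible: the termwise expansion $\overline\tau_d=\sum_x\sum_{k\ge1}P_d(\tau_x\ge k)$, the grouping by $\deg x=e$, and the medium/high-degree ranges handled by capping $\tau(f)\le C(d-1)^n$ on the isolated locus and multiplying by Poonen-type probability bounds, which decay fast enough to kill the polynomial cap.

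The genuine gap is the codimension lemma, which you flag as ``the main obstacle'' but then support with two claims that are both false as stated. First, the asserted extremality of the $A_k$-series, $c(k)\ge n+k$, fails for higher-corank strata: the locus of $f$ whose full $j$-jet at $x$ vanishes has codimension $\binom{n+j}{n}\approx j^n/n!$ in jet space, yet every $f$ in it with an isolated singularity at $x$ has $\tau_x$ of order $j^n$ (an ordinary $(j{+}1)$-fold point is quasi-homogeneous with $\tau=\mu=j^n$), so the true rate is at best $c(k)\gtrsim k/n!$, far below $n+k$. Second, the fallback ``any unbounded rate comfortably provides'' convergence is wrong: $c(k)\to\infty$ alone does not make $\sum_k q^{-c(k)}$ converge (take $c(k)=\lceil\log_q\log k\rceil$); you need at least $c(k)\ge n+(1+\varepsilon)\log_q k$, uniformly over all analytic types and compatibly with the finite-jet truncation imposed by $f\in K[x_0,\dots,x_n]_d$. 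Proving such a uniform lower bound is delicate precisely because $\tau_x(f)$ equals the codimension of the contact orbit of the germ, while the stratum $\{\tau_x\ge k\}$ is a union of positive-dimensional families of orbits, so positive modality genuinely lowers the stratum codimension below $k$; some quantitative control of modality (or a colength argument over punctual Hilbert schemes, where for $n\ge3$ the Hilbert scheme dimension grows superlinearly in $k$ and naive union bounds fail) is unavoidable. Until that lemma is established, the sum $\sum_x\sum_k P_d(\tau_x\ge k)$ is not controlled and the proposal remains a program rather than a proof.
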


If Theorem~\ref{T:Main0} held over finite fields, then this would imply that hypersurfaces without defect form a set of density $1$. However, so far, we can only use the restricted singularity types from Theorem~\ref{T:MainRes}.

\begin{lemma}\label{L:AkDensity}
 Let $x \in \mathbb A^n_K$ be a closed point with residue field $\kappa(x)$. Fix a positive integer $d$ and choose a polynomial $f \in K[x_1,\dots,x_n]_{\leq d}$ uniformly at random. Then the probability that $\{f = 0\}$ has at most an $A_k$ singularity for some $k \geq 1$ in $x$ is at least
 $$ 1 - \#\kappa(x)^{-n-3}.$$
\end{lemma}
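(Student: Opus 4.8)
The plan is to analyze the local condition at a single point $x$ and estimate the probability that a random polynomial $f$ fails to have at most an $A_k$ singularity there. The key observation is that the singularity type at $x$ is governed by the low-order Taylor expansion of $f$ in local coordinates: whether $f$ is smooth at $x$, has a non-degenerate quadratic part (giving an $A_1$ point), or---if the quadratic part is degenerate---whether the higher-order terms cut out an $A_k$ singularity for some $k$. First I would set up local coordinates at $x$ (working over the residue field $\kappa(x)$, writing $q_x := \#\kappa(x)$) and stratify the space of polynomials by the behavior of the $0$-jet, $1$-jet, and $2$-jet of $f$ at $x$. The point has at worst an $A_k$ singularity precisely when, after the constant and linear parts are handled, the quadratic form either is nondegenerate, or has exactly one-dimensional kernel along which a suitable higher-order term is nonzero.

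The central computation is a counting argument over $\kappa(x)$: I would count the proportion of jets that produce something \emph{worse} than an $A_k$ singularity and show this is at most $q_x^{-n-3}$. The condition ``$f(x) = 0$'' already costs a factor $q_x^{-1}$; the condition that $x$ be singular, i.e. $df(x) = 0$, costs a further factor $q_x^{-n}$. Conditioned on $x$ being a singular point lying on $\{f=0\}$, the singularity is at worst $A_k$ unless the quadratic part $Q$ of $f$ has corank $\geq 2$, or has corank exactly $1$ but the relevant higher-order obstruction along the kernel direction vanishes. The fraction of quadratic forms in $n$ variables over $\kappa(x)$ with corank $\geq 2$ is of order $q_x^{-3}$ (the nondegenerate and corank-one forms together have density $1 - O(q_x^{-3})$, using $\characteristic \kappa(x) \neq 2$ so that quadratic forms diagonalize), and the corank-one-but-higher-order-degenerate locus contributes at a comparable or smaller order. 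Multiplying the $q_x^{-1}$ and $q_x^{-n}$ factors against a bad-jet density of order $q_x^{-2}$ among the relevant coefficients yields the target bound $q_x^{-n-3}$.

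The cleanest way to organize this is via a \emph{finite-jet reduction}: the property of having at most an $A_k$ singularity at $x$ depends only on the image of $f$ in the truncated local ring $\mathcal O_{\mathbb A^n, x}/\mathfrak m_x^{N}$ for some fixed finite $N$ (and in fact $N = 3$ suffices to separate the bad locus from the $A_k$ locus up to the stated order, since once $Q$ has corank exactly one, a single nonzero cubic coefficient in the kernel direction already forces an $A_2$ singularity). Thus I would reduce to counting cosets in a finite-dimensional $\kappa(x)$-vector space of jets, where $f$ is \emph{uniformly} distributed as long as $d$ is large enough that all monomials up to degree $N$ are available; the uniformity over the jet space is exactly the kind of independence that Poonen's method (\cite{Poonen-Bertini}) provides and which underlies Theorem~\ref{T:DensityMilnor}.

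The main obstacle I anticipate is making the jet-counting genuinely uniform and sharp enough to land at the exponent $-n-3$ rather than a weaker bound like $-n-2$. This requires care in two places: first, correctly counting degenerate quadratic forms over a field of odd characteristic (here diagonalization and the classification of quadratic forms keeps the corank-$\geq 2$ locus at density $\Theta(q_x^{-3})$, which is the crucial saving that upgrades $-n-2$ to $-n-3$); and second, verifying that the corank-one stratum, where one must look at cubic and higher terms, does not contribute more than the quadratic stratum to the bad-probability estimate. A subtlety is that the residue field $\kappa(x)$ may be a proper extension of $\mathbb F_q$, so all densities must be expressed in terms of $q_x = \#\kappa(x)$ and the argument must be insensitive to which closed point $x$ is chosen; fixing the coordinates at $x$ and working intrinsically over $\kappa(x)$ handles this. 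Once the single-point bound is in place, it feeds directly---via inclusion--exclusion over all closed points, as in Poonen's sieve---into the global density statement of Theorem~\ref{T:Density}.
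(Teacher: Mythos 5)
Your overall strategy is the paper's: reduce to the truncated local ring at $x$, where the jet of $f$ is uniformly distributed once $d$ is large enough, stratify by the constant, linear and quadratic parts, and bound the bad locus by counting degenerate quadratic forms over $\kappa(x)$ (odd characteristic, so forms diagonalize). The genuine gap is in your treatment of the corank-one stratum. You pass to $3$-jets and propose the criterion ``corank one and nonzero cubic coefficient along the kernel direction'' for being at most $A_k$; but vanishing of that cubic coefficient does not preclude an $A_k$ point --- it merely forces $k \geq 3$ --- so the stratum you would count as bad is a strict over-count. This is fatal for the stated constant: writing $r := \#\kappa(x)$, conditional on $f(x)=0$ and $df(x)=0$ (cost $r^{-1-n}$), corank-one quadratic forms have density of order $r^{-1}$, so your extra ``bad'' stratum has conditional density of order $r^{-1}\cdot r^{-1} = r^{-2}$ --- the \emph{same} order as the corank~$\geq 2$ locus --- and your total bad mass comes out around $2\,r^{-n-3}$ rather than $r^{-n-3}$. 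The exact exponent matters here: it is precisely the $-n-3$ of this lemma that produces the factor $\zeta_{\mathbb P^n}(n+3)$ in Theorem~\ref{T:Density} via Poonen's sieve, so a bound of the shape $1 - C r^{-n-3}$ with $C>1$ would degrade the application.

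The paper needs no cubic data at all. Over the completed local ring, the splitting lemma in odd characteristic reduces $f$ to $x_2^2+\dots+x_n^2+g(x_1)$ with $\operatorname{ord}(g)\geq 3$, so the $2$-jet in $\mathcal O_x/\mathfrak m_x^3$ already decides the trichotomy: quadratic part of rank $n$ gives an $A_1$ point, rank $n-1$ gives an $A_k$ singularity for some $k\geq 2$ (any nonzero residual $g$ of order $k+1$ yields $A_k$; only $g\equiv 0$, a non-isolated singularity, escapes, and that sub-stratum is far smaller), and only rank $\leq n-2$ is bad. Hence the bad probability is exactly $r^{-1-n}$ times the density of quadratic forms of corank $\geq 2$, and Lemma~\ref{L:quadricrank} (the MacWilliams count) gives $p_{n,r} \geq r^{n(n+1)/2}\left(1-r^{-2}\right)$, i.e.\ corank-$\geq 2$ density at most $r^{-2}$, which yields $1 - r^{-n-3}$ on the nose. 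Your remark that the corank~$\geq 2$ density is of order $r^{-3}$ is correct asymptotically (that is the other inequality in Lemma~\ref{L:quadricrank}), but the bound actually used is the cruder $\leq r^{-2}$; it is the exponent $-2$, not $-3$, that combines with $r^{-n-1}$ to give $-n-3$. To repair your argument, drop the cubic-coefficient test and work with the $2$-jet classification only.
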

\begin{proof}
Let $\mathcal O_x$ be the local ring of $\mathbb A^n_K$ at $x$ and denote by $\mathfrak m_x$ its maximal ideal. Let
$$[f] = f_0 + f_1 + f_2 \in \mathcal O_x/\mathfrak m_x^3 \quad \text{ with } \deg f_i = i, \quad i = 0, 1, 2,$$
be the $2$-jet of $f$ at $x$. Define $X$ to be the hypersurface $\{f = 0\} \subseteq \mathbb A^n_K$. Then:
\begin{enumerate}[(1)]
 \item $X$ does not pass through $x$ $\Leftrightarrow$ $f_0 \neq 0$,
 \item $X$ is smooth at $x$ $\Leftrightarrow$ $f_0 = 0$ and $f_1 \neq 0$,
 \item $X$ is has an ordinary double point at $x$ $\Leftrightarrow$ $f_0 = 0$, $f_1 = 0$ and $f_2$ is a quadratic form of rank $n$,
 \item $X$ is has an $A_k$ singularity for some $k \geq 2$ at $x$ $\Leftrightarrow$ $f_0 = 0$, $f_1 = 0$ and $f_2$ is a quadratic form of rank $n-1$.
\end{enumerate}
The vector space $\mathcal O_x/\mathfrak m_x^3$ has dimension $1 + n + \frac{n(n+1)}{2}$ over $\kappa(x)$. Let $r := \#\kappa(x)$. The probability that $X$ has at most an $A_k$ singularity at $x$ hence equals
$$ \frac{(r-1)r^{n+n(n+1)/2} + (r^n-1)r^{n(n+1)/2} + p_{n,r}}{r^{1 + n + n(n+1)/2}} = 1 - \frac{r^{n(n+1)/2} - p_{n,r}}{r^{1 + n + n(n+1)/2}}. $$
where $p_{n,r}$ is the number of quadratic forms in $n$ variables of rank $\geq n-1$ over $\kappa(x)$.
The bounds from the subsequent Lemma~\ref{L:quadricrank} give
\begin{align*}
  1 - r^{-n-4} \geq  1 - \frac{r^{n(n+1)/2} - p_{n,r}}{r^{1 + n + n(n+1)/2}} \geq 1 - r^{-n-3}. & \qedhere
\end{align*}
\end{proof}

\begin{lemma}\label{L:quadricrank}
 The number $p_{n,q}$ of quadratic forms in $n$ variables of rank $\geq n-1$ over a field with $q$ elements equals
 $$ p_{n,q} = \prod_{i=1}^{\lfloor (n-1)/2 \rfloor} \frac{q^{2i}}{q^{2i}-1} \prod_{i=0}^{n-2} (q^{n-i}-1) - \prod_{i=1}^{\lfloor n/2 \rfloor} \frac{q^{2i}}{q^{2i}-1} \prod_{i=0}^{n-1} (q^{n-i}-1).$$
 Moreover, 
 $$  q^{\frac{n(n+1)}{2}}(1-q^{-2}) \leq p_{n,q} \leq q^{\frac{n(n+1)}{2}}(1-q^{-3}).$$
\end{lemma}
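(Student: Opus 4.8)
The plan is to reduce everything to counting symmetric matrices of a prescribed rank. Since $q$ is odd, over $\kappa(x)$ a quadratic form in $n$ variables is the same datum as a symmetric $n \times n$ matrix, and the rank of the form equals the rank of the matrix; there are $q^{n(n+1)/2}$ of them in total, and $p_{n,q}$ counts those of rank $n$ or $n-1$. First I would stratify by the radical: a symmetric matrix of rank $r$ is equivalent to the datum of its radical $W$, an $(n-r)$-dimensional subspace, together with a nondegenerate symmetric form on the quotient $\kappa(x)^n/W$. Writing $\nu(r)$ for the number of invertible symmetric $r \times r$ matrices, this yields $N_r = \binom{n}{r}_q \nu(r)$, where $\binom{n}{r}_q$ is the Gaussian binomial (the number of $r$-dimensional subspaces). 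Hence $p_{n,q} = N_n + N_{n-1} = \nu(n) + \tfrac{q^n-1}{q-1}\,\nu(n-1)$, using $\binom{n}{n-1}_q = \binom{n}{1}_q = (q^n-1)/(q-1)$.

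The heart of the matter is a closed form for $\nu(r)$. Because for odd $q$ the nondegenerate symmetric forms fall into exactly two $\mathrm{GL}_r$-orbits under $A \mapsto P^{T} A P$, distinguished by the discriminant, I would compute $\nu(r)$ by orbit--stabilizer as $\nu(r) = |\mathrm{GL}_r|\bigl(|O_r'|^{-1} + |O_r''|^{-1}\bigr)$, where $O_r', O_r''$ are the orthogonal groups of the two forms. Inserting the classical orders of $|\mathrm{GL}_r(\mathbb F_q)|$ and of $O_r^{\pm}(\mathbb F_q)$, with the case split for $r$ even versus odd, and simplifying, the even-index factors cancel and one is left with the uniform formula $\nu(r) = q^{r(r+1)/2} c_r$ with $c_r := \prod_{j \le r,\ j\text{ odd}}(1-q^{-j})$. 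Setting $Q := q^{n(n+1)/2}$, this gives $p_{n,q}/Q = c_n + \tfrac{1-q^{-n}}{q-1}\,c_{n-1}$. Using the parity relation $c_n = c_{n-1}$ ($n$ even) or $c_n = (1-q^{-n})c_{n-1}$ ($n$ odd), the two terms combine, and $p_{n,q}/Q$ collapses to a product of factors $(1-q^{-j})$ all of whose exponents satisfy $j \ge 3$. Matching denominators, the two displayed products in the statement are precisely $N_{n-1}$ and $N_n$, so that $p_{n,q}$ is their \emph{sum}.

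That last structural fact is exactly what drives the bounds. For the upper bound, the factor $(1-q^{-3})$ always occurs and every remaining factor is $\le 1$, so $p_{n,q} \le Q(1-q^{-3})$. For the lower bound I would invoke the elementary inequality $\prod_k (1-a_k) \ge 1 - \sum_k a_k$ for $a_k \in [0,1]$, together with $\sum_{j \ge 3} q^{-j} = \tfrac{1}{q^2(q-1)} \le q^{-2}$ for $q \ge 2$, to obtain $p_{n,q} \ge Q(1-q^{-2})$. Both estimates are then immediate.

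The main obstacle is the orthogonal-group bookkeeping behind $\nu(r)$: correctly tracking the two discriminant classes and the distinct orders $|O_r^{+}|, |O_r^{-}|$ in the even case, and verifying that after simplification all even-index factors cancel so that $\nu(r)$ reduces to the clean product over odd $j$. Once that identity is established, the exact formula for $p_{n,q}$ and the two estimates are routine algebra and a geometric-series bound.
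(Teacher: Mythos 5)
Your argument is correct, and it takes a genuinely different route from the paper on the counting side. The paper does not prove the formula for $p_{n,q}$ at all: it cites MacWilliams's rank distribution for symmetric matrices, then derives the bounds by simplifying that expression for even $n$ to $q^{n(n+1)/2}\prod_{j \text{ odd},\, 3\le j\le n+1}(1-q^{-j})$ (written as a $q$-Pochhammer symbol) and reducing odd $n$ to even via $p_{n,q}=q^n\, p_{n-1,q}$. You instead derive the count from scratch: the radical stratification $N_r=\binom{n}{r}_q\,\nu(r)$ together with the orbit--stabilizer computation of $\nu(r)$ over the two discriminant classes does give the classical identity $\nu(r)=q^{r(r+1)/2}\prod_{j\le r,\, j \text{ odd}}(1-q^{-j})$ (it checks out against direct counts for $r=1,2,3$, and the even-index cancellation you flag is exactly where the orders of $O_r^{+}$ and $O_r^{-}$ must be combined), and your parity relations $c_n=c_{n-1}$ resp.\ $c_n=(1-q^{-n})c_{n-1}$ treat both parities uniformly, landing on the same product of factors $1-q^{-j}$ with odd $j\ge 3$ that the paper reaches. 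Your lower bound via $\prod_k(1-a_k)\ge 1-\sum_k a_k$ and the geometric series $\sum_{j\ge 3}q^{-j}\le q^{-2}$ is arguably cleaner than the paper's induction step. Two remarks: first, your identification of the two displayed products as $N_{n-1}$ and $N_n$ and your insistence that $p_{n,q}$ is their \emph{sum} are correct --- the minus sign in the lemma as stated is a typo, which the paper's own simplification (the factor $1+\frac{q^n}{q^n-1}(q-1)$ in the even case) silently corrects; second, the dictionary between quadratic forms and symmetric matrices and the two-orbit classification both require $q$ odd, which you assume explicitly and which matches the paper's standing hypothesis of characteristic $\neq 2$. Note also that the bounds need $n\ge 2$ so that the factor $1-q^{-3}$ actually occurs (for $n=1$ one has $p_{1,q}=q$, violating the upper bound); this is harmless, since the lemma is only applied with $n\ge 3$.
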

\begin{proof}
 The formula for $p_{n,q}$ can be found in \cite{MacWilliams-Orthogonal}*{Theorem~2}. Suppose first that $n$ is even. Then
 \begin{align*}
p_{n,q} &= \left( 1 + \frac{q^n}{q^n-1} \cdot (q-1)   \right) \prod_{i=1}^{n/2-1} \frac{q^{2i}}{q^{2i}-1} \prod_{i=0}^{n-2} (q^{n-i}-1)\\
&=   \frac{q^{n+1}-1}{q^n-1}  \cdot \prod_{i=1}^{n/2-1} \frac{q^{2i}}{q^{2i}-1} \prod_{i=0}^{n-2} (q^{n-i}-1) \\
&=  \prod_{i=1}^{n/2-1} q^{2i} \cdot \prod_{i=0}^{n/2-1} (q^{n+1-2i}-1) \\
&=   \prod_{i=0}^{n/2-1} (q^{n+1}-q^{2i}) \\
&= q^\frac{n(n+1)}{2} \prod_{i=0}^{n/2-1} (1-q^{2i-n-1}) \\
&=  q^\frac{n(n+1)}{2} (q^{-n-1}; q^2)_{n/2} ,
 \end{align*}
 where we used the notation for the $q$-Pochhammer symbol. It is clear that
 $(q^{-n-1}; q^2)_{n/2}$ is a decreasing sequence bounded above from $1 - q^{-3}$. Induction on $q \geq 2$ shows the inequality
 $$ \prod_{i=3}^{n} (1 - q^{-i}) \geq 1 - q^{-2} + q^{-n}, $$
 whence $$(q^{-n-1}; q^2)_{n/2} \geq \prod_{i=3}^\infty (1-q^{-3}) \geq 1 - q^{-2}.$$
 For odd $n$, we can reduce to the even case by observing that $p_{n,q} = q^n \cdot p_{n-1,q}$.
\end{proof}

The following proves Theorem~\ref{T:Density}:
\begin{corollary}\label{C:Density} Let $K$ be a finite field of odd characteristic. Then
\begin{align*}
  \lim_{d \to \infty} \frac{\#\{f \in K[x_0,\dots,x_n]_d \mid \{f = 0\} \subseteq \mathbb P^n_{K} \text{ has no defect}\}}{\#K[x_0,\dots,x_n]_d}  \geq \frac{1}{\zeta_{\mathbb P^n_K}(n+3)}.
 \end{align*} 
\end{corollary}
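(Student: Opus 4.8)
The plan is to combine the contrapositive of Theorem~\ref{T:MainRes} with the density-one statement of Theorem~\ref{T:DensityMilnor} and Poonen's closed point sieve. Fix once and for all the constant $c = \tfrac14$, and consider two events on a polynomial $f \in K[x_0,\dots,x_n]_d$. Say that $f$ satisfies $(\mathrm A)$ if at every closed point $x \in \mathbb P^n_K$ the hypersurface $\{f=0\}$ either misses $x$, is smooth at $x$, or is singular at $x$ of corank $\leq 1$ (i.e.\ the quadratic part of a local equation has rank $\geq n-1$); and say that $f$ satisfies $(\mathrm B)$ if its global Tjurina number obeys $\tau(f) \leq \tfrac14 d$. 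I claim that $(\mathrm A)$ and $(\mathrm B)$ together force $\{f=0\}$ to have no defect, so that the density of no-defect hypersurfaces is bounded below by the density of $(\mathrm A)\wedge(\mathrm B)$.

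First I would verify this implication. Event $(\mathrm B)$ makes $\tau(f)$ finite, so $X := \{f=0\}$ has only isolated singularities, each of corank $\leq 1$ by $(\mathrm A)$. In characteristic $\neq 2$, an isolated hypersurface singularity of corank $\leq 1$ is, after completing and applying the splitting lemma, of type $A_{k}$ for some finite $k \geq 1$; in particular there are no ordinary multiple points of multiplicity $\geq 3$, since those have vanishing quadratic part and hence corank $n \geq 3$. Thus in the notation of Theorem~\ref{T:MainRes} we have $\Sigma_O = \emptyset$ and $\Sigma = \Sigma_A$ consists of $A_{k_x}$ points. Using $k_x \leq \tau_x$ for the local Tjurina number $\tau_x$ of an $A_{k_x}$ point,
$$ \sum_{x \in \Sigma_A} 2\left\lceil \frac{k_x}{2} \right\rceil \;\leq\; \sum_{x \in \Sigma_A} (k_x + 1) \;\leq\; 2\sum_{x \in \Sigma_A} \tau_x \;=\; 2\tau(f) \;\leq\; \tfrac12 d \;<\; d. $$
By the contrapositive of Theorem~\ref{T:MainRes}, $X$ has no defect, as claimed.

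It remains to bound the density of $(\mathrm A)\wedge(\mathrm B)$ from below. By Theorem~\ref{T:DensityMilnor} applied with $c=\tfrac14$, the event $(\mathrm B)$ has density $1$, so its complement is negligible and it suffices to estimate the density of $(\mathrm A)$ alone. Now $(\mathrm A)$ is an intersection of local conditions, one at each closed point $x$, and each of these is determined by the $2$-jet of $f$ at $x$; by Lemma~\ref{L:AkDensity} the local density $\mu_x$ of the condition at $x$ satisfies $\mu_x \geq 1 - \#\kappa(x)^{-n-3}$. Feeding these finitely-determined local factors into Poonen's closed point sieve (\cite{Poonen-Bertini}, in the quantitative form used to prove Theorem~\ref{T:DensityMilnor} in \cite{Lindner-Density}) realises the limiting density of $(\mathrm A)$ as the convergent Euler product $\prod_{x} \mu_x$, whence
$$ \lim_{d \to \infty} \operatorname{density}(\mathrm A) \;=\; \prod_{x \in \mathbb P^n_K} \mu_x \;\geq\; \prod_{x \in \mathbb P^n_K}\left(1 - \#\kappa(x)^{-n-3}\right) \;=\; \frac{1}{\zeta_{\mathbb P^n_K}(n+3)}. $$
Since $(\mathrm B)$ has density $1$, the liminf of the density of $(\mathrm A)\wedge(\mathrm B)$ equals that of $(\mathrm A)$, and combining the three displays yields the asserted lower bound.

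The main obstacle is the sieve step: one must control the contribution of the medium- and high-degree closed points so that the infinite product of local densities is actually attained in the limit, not merely bounded above by it. This is exactly the mechanism of Poonen's Bertini theorem and its refinement in \cite{Lindner-Density}, where the condition ``corank $\leq 1$'' is finitely determined of the same flavour as smoothness; convergence of $\prod_x(1 - \#\kappa(x)^{-n-3})$ (equivalently $n+3 > \dim \mathbb P^n_K$) is precisely what makes the high-degree tail harmless. A secondary point needing care is the passage from ``corank $\leq 1$'' to a genuine $A_k$ singularity with finite $k$, which draws on both the isolatedness furnished by $(\mathrm B)$ and the standing hypothesis $\operatorname{char} K \neq 2$.
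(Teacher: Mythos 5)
Your proposal is correct and follows essentially the same route as the paper: the paper's own proof combines exactly the same three ingredients --- the contrapositive of Theorem~\ref{T:MainRes}, the density-one Tjurina bound of Theorem~\ref{T:DensityMilnor}, and the local $2$-jet estimate of Lemma~\ref{L:AkDensity} fed into Poonen's sieve (\cite{Poonen-Bertini}, Theorem~1.3) --- merely with the bookkeeping inverted, decomposing $\mu(\text{defect})$ into ``defect with at most $A_k$ singularities'' (density $0$) plus ``worse than $A_k$ singularities'' (density at most $1 - 1/\zeta_{\mathbb P^n_K}(n+3)$) instead of your intersection of good events $(\mathrm A)\wedge(\mathrm B)$. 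Your explicit use of $(\mathrm B)$ to rule out non-isolated corank-$1$ (i.e.\ $A_\infty$) points before invoking the $A_k$ classification is a minor tidiness gain over the paper's phrasing of Lemma~\ref{L:AkDensity}, but the mathematics is the same.
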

\begin{proof}
 For a property $P$ of hypersurfaces defined by polynomials $f \in K[x_0,\dots,x_n]_d$, write 
 $$\mu(P) := \lim_{d \to \infty} \frac{\#\{f \in K[x_0,\dots,x_n]_d \mid \{f = 0\} \subseteq \mathbb P^n_{K} \text{ satisfies } P\}}{\#K[x_0,\dots,x_n]_d}.$$
 By Theorems \ref{T:MainRes} and \ref{T:DensityMilnor}, there is a constant $c > 0$ such that
 $$ \mu(\text{defect and at most $A_k$ singularities}) \leq \mu(\tau(f) > c \cdot d) = 0.$$
 Moreover, combining Lemma~\ref{L:AkDensity} with \cite{Poonen-Bertini}*{Theorem~1.3},
 \begin{align*} \mu(\text{defect and worse than $A_k$ singularities}) &\leq \mu(\text{worse than $A_k$ singularities}) \\
 &\leq 1 - \frac{1}{\zeta_{\mathbb P^n_K}(n+3)}.
 \end{align*}
 Putting this together,
 \begin{align*}
\mu(\text{no defect}) &= 1 - \mu(\text{defect}) \\
&= 1 - \mu(\text{defect and at most $A_k$ sing.})  - \mu(\text{defect and worse than $A_k$ sing.}) \\
&\geq \frac{1}{\zeta_{\mathbb P^n_K}(n+3)},
 \end{align*}
which completes the proof.
\end{proof}

\begin{remark}
In view of Theorem~\ref{T:MainRes}, we could have added the contribution of ordinary multiple points. The probability for a hypersurface to have a singularity at a point $x$ and this being an ordinary multiple point of multiplicity $\geq 3$, equals
$$ \sum_{d \geq 3} \#\{ f \in \kappa(x)[x_1,\dots,x_n]_d \mid \{f = 0\} \text{ is smooth} \} \cdot \#\kappa(x)^{-\binom{n+d}{d}}.$$
This turns out to be small compared to the local density of at most $A_k$ singularities and we do not expect this to bring a substantial improvement to the bound given in Lemma~\ref{L:AkDensity}.
\end{remark}

\bibliography{references}

\vspace{2\baselineskip}

\begin{minipage}{0.48\textwidth}
\footnotesize
\textit{Address:}\\
{\scshape
Institut f\"ur Algebraische Geometrie\\
Leibniz Universität Hannover\\
Welfengarten 1\\
30167 Hannover\\
Germany}
\\[\baselineskip]
\textit{E-mail:} \texttt{lindnern@math.hu-berlin.de}
\end{minipage}
\begin{minipage}{0.48\textwidth}
\footnotesize
\textit{Current address:}\\
{\scshape
Institut f\"ur Mathematik\\
Humboldt-Universität zu Berlin\\
Unter den Linden 6\\
10099 Berlin\\
Germany}
\\[\baselineskip]
\end{minipage}

\end{document}